\numberwithin{equation}{section}
\newcommand{\m}[1]{\mathbb{ #1}}
\newcommand{\mc}[1]{\mathcal{ #1}}
\newfont{\goth}{eufm10 at 12pt}
\newfont{\gots}{eufm8 at 9pt}
\newcommand{\rmd}{{\,\rm d}}
     \def\ol{\overline}    
\def\al{\alpha}       \def\be{\beta}        \def\ga{\gamma}
\def\de{\delta}       \def\eps{\varepsilon}  
\def\th{\theta}       
       \def\la{\lambda}      
\def\si{\sigma}                
\def\ph{\varphi}               \def\ps{\psi}
       \def\Ga{\Gamma}       
\def\La{\Lambda}             \def\Ph{\Phi}
\def\espace{\vspace{0.8em}}
\newtheorem{Thm}{Theorem}[section]
\newtheorem{Prop}[Thm]{Proposition}
\newtheorem{Lem}[Thm]{Lemma}
\newtheorem{Cor}[Thm]{Corollary}
\newtheorem{Fact}[Thm]{Fact}
\newtheorem{Def}[Thm]{Definition}
\newtheorem{Ex}[Thm]{Example}
\theoremstyle{definition}
\newtheorem{Rem}[Thm]{Remark}
\def\bt{\begin{Thm}}
\def\et{\end{Thm}}
\def\br{\begin{Rem}}
\def\er{\end{Rem}}
\def\bc{\begin{Cor}}
\def\ec{\end{Cor}}
\def\bp{\begin{Prop}}
\def\ep{\end{Prop}}
\def\bl{\begin{Lem}}
\def\el{\end{Lem}}
\def\bd{\begin{Def}}
\def\ed{\end{Def}}
\def\bq{\begin{quotation}}
\def\eq{\end{quotation}}
\def\bfa{\begin{Fact}}
\def\efa{\end{Fact}}
\def\bex{\begin{Ex}}
\def\eex{\end{Ex}}
\def\ra{\rightarrow}
\def\vs{\vspace{1em}}
\title{Harmonic quasi-isometric  maps II~:\\  
negatively curved manifolds}
\author{Yves Benoist \&
Dominique Hulin}
\date{}
\begin{document}
\maketitle
\begin{abstract}
We prove that a quasi-isometric map, and more generally a coarse embedding, 
between pinched Hadamard manifolds 
is within bounded distance from a unique harmonic map. 
\end{abstract}

\setcounter{tocdepth}{2}

\renewcommand{\thefootnote}{\fnsymbol{footnote}} 
\footnotetext{\emph{2010 Math. subject class.}  Primary 53C43~; Secondary 53C24, 53C35, 58E20} 
\footnotetext{\emph{Key words} Harmonic map, 
Harmonic measure, Quasi-isometric map, Coarse embedding, Boundary map, Hadamard manifold, Negative curvature}     
\renewcommand{\thefootnote}{\arabic{footnote}} 

\section{Introduction}  
\label{secintro}

The aim of this article, which is a sequel to \cite{BH15}, is the following theorem.
\bt
\label{thdfxhx}
Let $f:X\ra Y$ be a quasi-isometric map between two pinched Hadamard manifolds.
Then there exists a unique harmonic map $h:X\ra Y$ which stays within bounded distance from $f$
i.e. such that
$$\textstyle
\sup_{x\in X}d(h(x),f(x))<\infty\; .
$$
\et

We first recall  a few definitions. 
A pinched Hadamard manifold $X$ is a
complete simply-connected Riemannian manifold of dimension at least $ 2$ whose sectional curvature is pinched between 
two negative constants:
$-b^2\leq K_X\leq -a^2<0$.
A map $f:X\ra Y$ between two metric spaces $X$ and $Y$ 
is said to be {\it quasi-isometric} if there exist
constants $c\geq 1$ and $C\geq 0$ such that 
$f$ is {\it $(c,C)$-quasi-isometric}. This means that
one has 
\begin{equation}
\label{eqnquasiiso}
c^{-1}\, d(x,x')-C
\;\leq\; 
d(f(x),f(x'))
\;\leq\; 
c\, d(x,x')+C\; 
\end{equation}
for all $x$, $x'$ in $X$. 
A $\mc C^2$ map $h:X\ra Y$ between two Riemannian manifolds $X$ and $Y$ is said to be 
{\it harmonic} if  it satisfies the elliptic nonlinear partial differential equation
${\rm tr}(D^2h)=0$ where $D^2h$ is the second covariant derivative of $h$.

Partial results towards the existence statement 
were obtained in \cite{Pansu89}, \cite{TamWan95}, \cite{HardtWolf97},
\cite{Markovic02}, \cite{BonSch10}.
A major breakthrough was achieved by Markovic who solved the Schoen conjecture, 
i.e. the case where $X=Y$ is the hyperbolic plane $\m H^2_\m R$, and by  Lemm--Markovic who proved 
the existence for the case $X=Y=\m H^k_\m R$ in \cite{Marko3}, 
\cite{Marko2} and \cite{Markon}.
The existence when both $X$ and $Y$ are rank one symmetric spaces,
which was conjectured by Li and Wang in \cite[Introduction]{LiWang98},
was proved in our paper  \cite{BH15}.
We refer to \cite[Section 1.2]{BH15} for more motivations and   a precise historical perspective on this result.

Partial results towards the uniqueness statement were obtained by Li and Tam in \cite{LiTam93},
and by Li and Wang in \cite{LiWang98}.  
All these papers  were dealing with rank one symmetric spaces.

Note that Theorem \ref{thdfxhx} was conjectured
by Markovic 
at the end of the conference talk www.youtube.com/watch?v=A5Yt83I1FrY,
during a 2016 Summer School in Grenoble. 
According to our knowledge, Theorem \ref{thdfxhx} is new even in the case 
where both $X$ and $Y$ are assumed to be surfaces. 

The strategy of the proof of the existence follows the lines of the proof in \cite{BH15}.
As in \cite{BH15}, we replace the quasi-isometric map $f$ by a  
$\mc C^\infty$-map whose first two covariant derivatives are bounded.
But we need  to modify the barycenter argument we used in \cite{BH15}
for this smoothing step. 
See Subsection \ref{secroulip}  for more details on 
this step.
As in \cite{BH15}, we then introduce the harmonic maps $h_R$ that coincide with $f$
on a sphere of $X$ with large radius $R$ and 
we need a uniform bound for the distances between 
the maps $h_{_R}$ and $f$. The heart of our argument is in Chapter \ref{secharmonicmap} 
which contains the boundary estimates, and in Chapter \ref{secinterior} 
which contains the interior estimates, for $d(h_R,f)$.
The proof of these interior estimates 
is based on a new simplification of  an idea by Markovic in \cite{Marko2}.
Indeed we will introduce a point $x$ where $d(h_{_R}(x),f(x))$ is maximal and focus on a subset $U_{\ell_0}$ of a 
sphere $S(x,\ell_0)$ whose definition 
\eqref{eqnur} is much simpler than in \cite{Marko2} or \cite{BH15}.
This simplification is the key point which allows us to extend the arguments of \cite{BH15} to pinched Hadamard manifolds.
In this proof  we use a uniform control 
on the harmonic measures on all the spheres of $X$,
which is given in Proposition \ref{prosixrcxthe}.
We refer to Section \ref{secstrexi}  for more details on our strategy of proof of the existence.

In order to prove the uniqueness, 
we need to introduce Gromov-Hausdorff limits  of the pointed metric spaces $X$ and $Y$
with respect to base points going to infinity and  therefore to deal with 
$\mc C^{2}$-Riemannian manifolds with  $\mc C^1$-metrics.
This will be done in Chapter \ref{secunihar}. 
We refer to Section \ref{secstruni} for more details on our strategy of proof of the uniqueness.

In Chapter \ref{secweacoahar}, we extend Theorem \ref{thdfxhx}
to coarse embeddings (see Definition \ref{defcoaemb} and Theorem \ref{thmharcoa}). 
The proof is similar but relies on the existence
of a boundary map for coarse embeddings. We also show that 
Theorem \ref{thdfxhx} can not be extended to Lipschitz maps (Example \ref{exaharlip}) 

Chapter \ref{secbouweacoa} is dedicated 
to  the existence of this boundary map which, for a coarse embedding,
 is well-defined outside a set of zero Hausdorff dimension (Theorem \ref{thmboumap}).
The existence of such a boundary map seems to be new.

We thank the MSRI for its hospitality during the Fall 2016 
where this project was developed.  
We are also very grateful to  A. Ancona, U. Hamenstadt, 
M. Kapovich and F. Ledrappier for sharing their insight with us.

\section{Smoothing}
\label{sechadamani}

In this chapter, we recall a few basic facts on Hadamard manifolds,
and we explain how to replace our quasi-isometric map $f$ by a 
$\mc C^\infty$ map whose first two covariant derivatives are bounded.

\subsection{The geometry of Hadamard manifolds}  
\label{secgeotriangle}
\bq
We first recall basic estimates on Hadamard manifolds for triangles, for images of triangles under  quasi-isometric maps, and for the Hessian of the distance function.
\eq

All the Riemannian manifolds will be assumed to be connected. We will denote by 
$d$ their distance function. 

A Hadamard manifold  is a complete simply connected Riemannian manifold $X$ of dimension $k\geq 2$
whose curvature is non positive $K_X\leq 0$.  
For instance, the Euclidean space $\m R^k$ is a Hadamard manifold with zero curvature $K_X=0$,
and the real hyperbolic space $\m H^k_{\m R}$  is 
a Hadamard manifold with constant  curvature $K_X=-1$.
We will  say that  $X$ is pinched if there exist constants 
$a,b>0$ such that 
$$-b^2\leq K_X\leq -a^2<0.$$
For instance, the non-compact rank one symmetric spaces are pinched Hada\-mard manifolds.
 
Let  $x_0$, $x_1$, $x_2$ be three points on a Hadamard manifold $X$.
The {\it Gromov product} of the points $x_1$ and $x_2$ seen from $x_0$ is defined as
\begin{equation}
(x_1|x_2)_{x_0}:= (d(x_0,x_1)+d(x_0,x_2)-d(x_1,x_2))/2.
\end{equation}

We recall the basic comparison lemma which is one of the motivations for introducing the Gromov product.

\bl
\label{lemcomparison}
Let $X$ be a Hadamard manifold with
$-b^2\leq K_X\leq -a^2< 0$.
Let $T$ be a geodesic triangle in $X$ with vertices $x_0$, $x_1$, $x_2$,
and let $\theta_0$ be the angle of $T$ at the vertex $x_0$.\\
$a)$ One has\;\; $(x_0|x_2)_{x_1}\geq d(x_0,x_1)\,\sin^2(\theta_0/2)$.\\
$b)$ One has \;\;
$\theta_0\leq 4\, e^{-a\,(x_1|x_2)_{x_0}} .$\\
$c)$ Moreover, if $\min( (x_0|x_1)_{x_2},(x_0|x_2)_{x_1})\geq b^{-1}$, 
one has 
$\theta_0\geq e^{-b(x_1|x_2)_{x_0}}$.
\el

\begin{proof} This is classical. See for instance \cite[Lemma 2.1]{BH15}.
\end{proof}

We now recall the effect of a quasi-isometric map on the Gromov product.

\bl
\label{lemproduct}
Let $X$, $Y$ be Hadamard manifolds with
$-b^2\leq K_X\leq -a^2< 0$ and $-b^2\leq K_Y\leq -a^2< 0$,
and let $f:X\ra Y$ be a $(c, C)$-quasi-isometric map.
There exists $A=A(a,b,c,C)>0$ such that, for all 
$x_0$, $x_1$, $x_2$ in $X$, one has
\begin{equation}
\label{eqnproduct}
c^{-1}(x_1|x_2)_{x_0}-A
\;\leq \;(
f(x_1)|f(x_2))_{f(x_0)}
\;\leq \;
c(x_1|x_2)_{x_0}+A\; .
\end{equation}
\el

\begin{proof}
This is a general property of quasi-isometric maps between Gromov $\de$-hyperbolic spaces which is due to M. Burger. See \cite[Prop. 5.15]{GhysHarp90}.
\end{proof}

When $x_0$ is a point in a Riemannian manifold $X$, we denote by $d_{x_0}$ the distance function 
defined by $d_{x_0}(x)=d(x_0,x)$ for $x$ in $X$.
We denote  by $d^2_{x_0}$ the square of this function.
When $F:X\rightarrow \m R$ is a $\mc C^2$ function, 
we denote by $DF$ its differential 
and by $D^2F$ its second covariant derivative.

\bl
\label{lemhessiandist}
Let $X$ be a Hadamard manifold and $x_0\in X$.\\
Assume that  $-b^2\leq K_X\leq -a^2\leq 0$. 
The Hessian of the distance function $d_{x_0}$ satisfies on $X\setminus\{x_0\}$
\begin{equation}
\label{eqnd2rhox0}
a\coth (a\, d_{x_0})\, g_0\leq D^2 d_{x_0} \leq 
b \coth (b\, d_{x_0})\, g_0\, ,
\end{equation}
where $g_0:=g_X -Dd_{x_0}\otimes Dd_{x_0}$ and $g_{_X}$ is the Riemannian metric on $X$.
\el

When $a=0$ the left-hand side of \eqref{eqnd2rhox0} must be interpreted 
as $d_{x_0}^{-1}g_0$.

\begin{proof} This is classical. See for instance \cite[Lemma 2.3]{BH15}
\end{proof}

\subsection{Smoothing rough Lipschitz maps}  
\label{secsmoothquasi}

\bq
The following proposition will allow us to assume 
in Theorem \ref{thdfxhx} that the quasi-isometric map $f$ we start with is $\mc C^\infty$
with bounded derivative and bounded second covariant derivative.
\eq

\subsubsection{Rough Lipschitz maps}
\label{secroulip}
A map $f:X\ra Y$ between two metric spaces $X$ and $Y$ 
is said to be {\it rough Lipschitz} if there exist
constants $c\geq 1$ and $C\geq 0$ such that, 
for all $x$, $x'$ in $X$,
one has
\begin{equation}
\label{eqnroulip}
d(f(x),f(x'))
\;\leq\; 
c\, d(x,x')+C\; .
\end{equation}

\bp
\label{prosmoothquasi}
Let $X$, $Y$ be two  Hadamard manifolds with bounded curvatures
$-b^2\leq K_X\leq 0$ and $-b^2\leq K_Y\leq 0$. Let 
$f:X\ra Y$ be a rough Lipschitz map.
Then there exists a $\mc C^\infty$  map $\widetilde{f}:X\ra Y$ 
within bounded distance from $f$ 
and  whose  first two covariant derivatives $D\widetilde{f}$ and $D^2\widetilde{f}$
are bounded on $X$.
\ep

We denote $k=\dim X$ and $k'=\dim Y$.
We will first construct in \ref{seclipcon} 
a regularized  map $\widetilde{f}:X\ra Y$ 
which is Lipschitz continuous.
This first construction is the same as for rank one symmetric spaces 
in \cite[Proposition 2.4]{BH15}. This construction will not allow us to control the
second covariant derivative, hence we will have 
to combine this first construction with an iterative smoothing 
process in local charts 
that we will explain in \ref{secsecder}.

\subsubsection{Lipschitz continuity}
\label{seclipcon}
The first part of the proof of Proposition \ref{prosmoothquasi}
relies on the following lemma.

\bl
\label{lemsmoothquasi}
Let $Y$ be a  Hadamard manifold. \\
$a)$ Let $\mu$ be a positive finite Borel  measure 
on $Y$ supported by a ball $B(y_0,R)$. 
The function $Q_{\mu}$ on $Y$
defined by
$$\textstyle
Q_\mu(y)\; =\; \int_Yd(y,w)^2\,\rmd \mu(w)\, 
$$
has a unique minimum $y_\mu$ in $Y$ 
called the center of mass of $\mu$.
This center of mass $y_\mu$ belongs to the ball $B(y_0,R)$.\\
$b)$ Let $\mu_1$, $\mu_2$ be two positive finite Borel measures on $Y$. 
Assume that\\ 
$(i)$ $\mu_1(Y)\geq m$ and $\mu_2(Y)\geq m$ for some $m>0$,\\
$(ii)$ both $\mu_1$ and $\mu_2$ are supported on the same ball $B(y_0,R)$,\\
$(iii)$ the norm of  $\|\mu_1-\mu_2\|\leq \eps$.\\
Then, the distance between their centers of mass $y_{\mu_1}$ and $y_{\mu_2}$ is bounded by 
\begin{equation}
\label{eqndymu12}
d(y_{\mu_1},y_{\mu_2})\leq 4\eps R/m\, .
\end{equation}
\el

\begin{proof}[Proof of Lemma \ref{lemsmoothquasi}]
$a)$
Since the space $Y$ is a proper space, i.e.  its balls are compact, 
the function  $Q_{\mu}$ is proper and admits a minimum $y_\mu$.
Since $Y$ has non-positive curvature  the median inequality holds, namely one has
for all $y$, $y_1$, $y_2$, $y_3$ in $Y$ where $y_3$ is the midpoint of $y_1$ and $y_2$~:
\begin{equation}
\label{eqnmed}
\tfrac12 d(y_1,y_2)^2\leq d(y,y_1)^2+d(y,y_2)^2-2d(y,y_3)^2.
\end{equation}
Integrating \eqref{eqnmed} with respect to $\mu$, one checks that the function  $Q_{\mu}$ satisfies the following uniform convexity property~: if $y_3$ is the midpoint of $y_1$ and $y_2$
\begin{equation*}
\label{eqnqsc}
\tfrac{m}{2} d(y_1,y_2)^2\leq Q_{\mu}(y_1)+Q_{\mu}(y_2)-2Q_{\mu}(y_3)\, .
\end{equation*}
Applying this inequality with $y_1=y_\mu$ and $y_2=y$,
one gets for each $y$ in $Y$
\begin{equation}
\label{eqnqmuymu}
\tfrac{m}{2} d(y_\mu,y)^2\leq Q_{\mu}(y)-Q_{\mu}(y_\mu)\, ,
\end{equation}
so that $y_\mu$ is the unique minimum of $Q_\mu$.

We now check that $y_\mu\in B(y_0,R)$.
By  the median inequality \eqref{eqnmed}, the ball $B(y_0,R)$ is convex,
every point $y$ in $Y$ admits a unique nearest point $y'$ on $B(y_0,R)$,
and this point $y'$ also satisfies the inequality 
$$
d(y',w)\leq d(y,w)
\;\;\mbox{\rm for all $w$ in $B(y_0,R)$.}
$$
Therefore, one has $Q_\mu(y')\leq Q_\mu(y)$. This proves that 
the center of mass $y_\mu$ belongs to 
the ball $B(y_0,R)$.

$b)$ Applying twice Inequality \eqref{eqnqmuymu}, one gets the two inequalities
\begin{eqnarray*}
\tfrac{m}{2} d(y_{\mu_1},y_{\mu_2})^2
&\leq & 
Q_{\mu_1}(y_{\mu_2})-Q_{\mu_1}(y_{\mu_1})\, ,
\;\;\;\;{\rm }\\
\tfrac{m}{2} d(y_{\mu_1},y_{\mu_2})^2
&\leq & 
Q_{\mu_2}(y_{\mu_1})-Q_{\mu_2}(y_{\mu_2})\, .
\end{eqnarray*}
Summing these two inequalities yields
\begin{eqnarray*}
m\,d(y_{\mu_1},y_{\mu_2})^2
&\leq &
(Q_{\mu_1}-Q_{\mu_2})(y_{\mu_2})-(Q_{\mu_1}-Q_{\mu_2})(y_{\mu_1})\\
&\leq &
\eps \sup_{w\in B(y_0,R)}|d(y_{\mu_1},w)^2-d(y_{\mu_2},w)^2|\\
&\leq &
4\eps R  \, d(y_{\mu_1},y_{\mu_2})\, ,
\end{eqnarray*}
which proves \eqref{eqndymu12}. 
\end{proof}

We now choose a non-negative $\mc C^\infty$ function $\chi:\m R\ra \m R$ with support 
included in $]-1,1[$, which is equal to $1$ on a neighborhood of $[-\frac12,\frac12]$ and whose first derivative is bounded by
$|\chi'|\leq 4$.

\begin{proof}[Proof of Proposition \ref{prosmoothquasi}] {\it First step:
Lipschitz continuity.}
We now explain this first construction.
We can assume $b=1$. Since a rough Lipschitz map $f:X\rightarrow Y$ is always  within bounded distance from a Borel measurable map, we can assume that $f$ itself is Borel measurable.
For $x$ in $X$, we introduce the positive finite measure $\mu_x$ on $Y$ such that  the equality
$$
\mu_x(\ph)=\int_X\ph(f(z))\,\chi(d(x,z))\rmd {\rm vol}_X(z)\, 
$$
holds for any positive function $\ph$ on $Y$. 
The measure $\mu_x$ is the image by $f$ of a measure supported in the ball $B(x,1)$.
We define $\widetilde{f}(x)\in Y$ to be the center of mass of this measure $\mu_x$.
Lemma \ref{lemsmoothquasi}.a tells us that the map $x\ra \widetilde{f}(x)$ is 
well-defined.  The Lipschitz continuity of $\widetilde{f}$
will follow from Lemma \ref{lemsmoothquasi}.b applied to two measures 
$\mu_1:=\mu_{x_1}$ and $\mu_2:=\mu_{x_2}$ with $x_1$, $x_2$ in $X$. Let us check that
the three assumptions in Lemma \ref{lemsmoothquasi}.b are satisfied.

$(i)$ Because of the pinching of the curvature on $X$, 
the Bishop volume estimates tell us that
there exist positive constants $0<m_0<M_0$ such that one has, for all $x$~:

$$
m_0\leq {\rm vol}(B(x,\tfrac12))\leq \mu_x(Y)\leq {\rm vol}(B(x,1))\leq M_0\, .
$$

$(ii)$ When $x_1$, $x_2$ are two points of $X$ with 
$d(x_1,x_2)\leq 1$, the bound \eqref{eqnroulip} ensures that 
both $\mu_{x_1}$ and $\mu_{x_2}$ are supported on
the ball $B(f(x_1),2c+C)$.   

$(iii)$ The norm of the difference of these measures satisfies
\begin{eqnarray*}
\|\mu_{x_1} -\mu_{x_2}\|
&\leq& 
M_0 \sup_{z\in X}|\chi(d(x_1,z))-\chi(d(x_2,z))|\\
&\leq &
4M_0\, d(x_1,x_2)\, .
\end{eqnarray*}

Thus Lemma \ref{lemsmoothquasi} applies and yields 
a bound on the
Lipschitz constant  of $\widetilde{f}$, namely

\mbox{ }\hfill
$\displaystyle
{\rm Lip}(\widetilde{f})
:=\sup_{x_1\neq x_2}
\frac{d(\widetilde{f}(x_1),\widetilde{f}(x_2))}{d(x_1,x_2)}\leq \frac{16(2c+C)M_0}{m_0}.
$
\end{proof}

\subsubsection{Bound on the second derivative}
\label{secsecder}
The second step of the proof  of Proposition \ref{prosmoothquasi} relies on three lemmas. 
The first lemma provides
a nice system of charts on $Y$.

\bl
\label{lemquacoo} 
Let $Y$ be a  Hadamard manifold  with 
$-b^2\leq K_Y\leq 0$ and $k'=\dim Y$. There exist constants $r_0=r_0(k',b)>0$ and 
$c_0=c_0(k',b)> 1$ 
such that, for each $y$ in $Y$, there exists a 
$\mc C^\infty$ chart $\Ph_y$ for the open ball
\begin{equation}
\label{eqnphxbxr}
\Ph_y: \mathring{B}(y,r_0)\stackrel{\sim}{\longrightarrow} U_{y}\subset \m R^{k'}
\;\; {\rm with}\;\; 
\Ph_y(y)=0\; 
\end{equation}
and such that 
\begin{equation}
\label{eqndphdph}
\|D\Ph_y\|\leq c_0
\;\; ,\;\;
\|D\Ph_y^{-1}\|\leq c_0
\;\; ,\;\;
\|D^2\Ph_y\|\leq c_0
\;\; ,\;\;
\|D^2\Ph_y^{-1}\|\leq c_0
\; .
\end{equation}
In particular, one has for all $r< r_0$
\begin{equation}
\label{eqnphbbux}
\Ph_y(B(y,c_0^{-1}r))\subset B(0,r)
\;\;{\rm and}\;\; 
B(0,c_0^{-1}r)\subset \Ph_y(B(y,r)).
\end{equation}
\el

We have endowed $\m R^{k'}$ with the standard Euclidean structure.

\begin{proof}[Proof of Lemma \ref{lemquacoo}]
This is classical.
One can for instance choose the so-called almost linear coordinates,
as in \cite[Section 2]{Jost84} or \cite[Section 3]{Petersen94}.
They are defined in the following way.
We fix an orthonormal basis $(e_i)_{1\leq i\leq k'}$ for the tangent space $T_yY$
and introduce the points  $y_i:=exp_y(-e_i)$ in $Y$.
The map $\Ph_y$ is defined by the formula
$$\Ph_y(z)=(d(z,y_1)\!-\! 1,\ldots ,d(z,y_{k'})\!-\! 1),
$$
where $z$ belongs to a sufficiently small ball $\mathring{B}(y,r_0)$.
See \cite[p. 43 and 58]{Jost84} for a detailed proof.
\end{proof}

There exist better systems of coordinates, the so-called harmonic coordinates.
We will not need them in this chapter, but we will need them in 
Chapter \ref{secunihar} to prove uniqueness (see Lemma \ref{lemharcoo}).
\vs

The second lemma explains how to modify a Lipschitz map $g$ 
inside a tiny ball $B(x,r)$ of $X$
so that the new map $g_{x,r}$ is constant on the ball
$B(x,\frac{r}{2})$ and  the first two derivatives of $g_{x,r}$ are controlled by those of $g$.
We recall that $\chi:\m R\ra \m R$ is a non-negative $\mc C^\infty$ function with support 
included in $]-1,1[$, which is equal to $1$ on a neighborhood of $[-\frac12,\frac12]$ and which is $4$-Lipschitz, i.e.
$|\chi'|\leq 4$.

\bl
\label{lemggt}
Let $X$ and $Y$ be two  Hadamard manifolds  with bounded curvatures
$-b^2\leq K_X\leq 0$, $-b^2\leq K_Y\leq 0$. Let $r_0>0$ and $c_0\geq 1$  
be as in Lemma \ref{lemquacoo}.
Let  $g:X\ra Y$ be a Lipschitz map, $x$ be a point in $X$, $y=g(x)$
and let $0<r<r_0$. Assume that 
\begin{equation}
\label{eqnlipgr0}
{\rm Lip}(g)< \frac{r_0}{c_0^2r}.
\end{equation}
Then the following formulas define a Lipschitz map  $g_{r,x}:X\ra Y$
\begin{eqnarray*}
\label{eqnghat}
\nonumber
g_{r,x}(z) & =& g(x)
\hspace{28ex} {\rm when}\;\; d(z,x)\leq \tfrac{r}{2}\, ,\\
&=&\Ph_y^{-1}\left((1\! -\!\chi(\tfrac{d(z,x)}{r}))\,\Ph_y(g(z))\right)
\;\;\; {\rm when}\;\; \tfrac{r}{2}\leq d(z,x)\leq r\, ,\\
\nonumber
&=& g(z)
\hspace{28ex} {\rm when}\;\; d(z,x)\geq r\, .
\end{eqnarray*}
One has the inequality 
between the Lipschitz constants on the ball $B(x,r)$~:
\begin{equation}
\label{eqnlipbxr}
{\rm Lip}_{B(x,r)}(g_{r,x})\leq 5{c_0}^2\,{\rm Lip}_{B(x,r)}(g).
\end{equation}
In particular, one has 
\begin{equation}
\label{eqnlipgrx}
{\rm Lip}(g_{r,x})\leq 5{c_0}^2\,{\rm Lip}(g).
\end{equation}

Moreover, if $g$ is $\mc C^2$ in a neighborhood of a point $z$ in $X$,
then $g_{r,x}$ is also $\mc C^2$ in this neighborhood
and one has 
\begin{equation}
\label{eqnd2gxg}
\| D^2g_{r,x}(z)\|\leq 
\left(\| D^2g(z)\|+{\rm Lip}_{B(x,r)}(g)^2+1\right)M_r\, ,
\end{equation}
where the constant $M_r\geq 1$ depends only on r, b, $k$, $k'$ and $\chi$.
\el

\begin{proof}[Proof of Lemma \ref{lemggt}]
Condition \eqref{eqnlipgr0} ensures that, 
for any point $z$ in the ball $B(x,r)$, the image $g(z)$ belongs to the ball 
$\mathring{B}(y, c_0^{-2}r_0)$. Therefore, by \eqref{eqnphbbux},
the vector $\Ph_y(g(z))$ belongs to the ball 
$\mathring{B}(0, c_0^{-1}r_0)\subset \m R^{k'}$.
When we  multiply this vector by the scalar $1\! -\!\chi(.)$, the new vector
is still in the same ball. This is why, using again \eqref{eqnphbbux},
the element $g_{r,x}(z)$ is well-defined and belongs to $B(y,r_0)$.

The upper bound \eqref{eqnlipbxr} follows from the chain rule.
Indeed, when $z$ is a point in $B(x,r)$ where $g$ is differentiable,
 the bound \eqref{eqndphdph} yields
\begin{eqnarray*}
\|Dg_{r,x}(z)\| &\leq &
c_0 \left( \frac{4}{r}\|\Phi_y(g(z))\|+ \| D(\Phi_y\circ g )(z)\| \right) \\
&\leq & 5c_0 \,{\rm Lip}_{B(x,r)}(\Phi_y\circ g)
\;\leq\; 
5{c_0}^2\,{\rm Lip}_{B(x,r)}(g).
\end{eqnarray*}

The upper bound \eqref{eqnd2gxg} follows from similar and longer computations
left to the reader, which also use the bounds \eqref{eqnd2rhox0} 
for $D^2d_x$.
\end{proof}

We will also need a third lemma. We recall that a subset $X_0$ of 
a metric space $X$ is said to be $r$-separated
if the distance between two distinct points of $X_0$ is at least $r$.

\bl
\label{lemsepsep} 
Let $X$ be a  Hadamard manifold  with 
$-b^2\leq K_X\leq 0$. Let $k=\dim X$ and $N_0:=100^{k}$.
There exists a radius $r_0=r_0(k,b)>0$ such that, 
for any $r<r_0$,
every $\frac{r}{2}$-separated subset $X_0$ of $X$
can be decomposed as a union of at most $N_0$ subsets which are $2r$-separated.  
\el

\begin{proof}[Proof of Lemma \ref{lemsepsep}]
The bound on the curvature of $X$ 
and the Bishop volume estimates ensure that 
we can choose $r_0>0$ such that
\begin{equation}
\label{eqnvolvol}
{\rm vol} B(x,4r)
\leq
N_0\,{\rm vol} B(x,\tfrac{r}{4})
\;\;\;
\mbox{\rm for 
all $r<r_0$ and $x$ in $X$.}
\end{equation}
This $r_0$ works. Indeed, let $X_1, X_2, \ldots$, $X_{N_0}$ be a sequence
of disjoint $2r$-separated subsets of $X_0$
with $X_1$ maximal in $X_0$, $X_2$ maximal in $X_0\!\smallsetminus\! X_1$, and so on. Every point $x$ of $X_0$ must be in one of the $X_i$'s with $i\leq N_0$ because, if it is not the case, 
each $X_i$  contains a point
in $B(x,2r)$, contradicting \eqref{eqnvolvol}.   
\end{proof}

\begin{proof}[Proof of Proposition \ref{prosmoothquasi}] {\it Second step:
bound on $D^2\widetilde{f}$.}
According to the first step of this proof, 
we can now assume that the map $f:X\ra Y$ is $c$-Lipschitz with $c\geq 1$.
\\ We can choose a new radius $r_0=r_0(k,k',b)$ that satisfies 
both  conclusions of Lemma \ref{lemsepsep} for $X$ and of 
Lemma \ref{lemquacoo} for  $Y$.
We will use freely the notations of these two lemmas.
Now let 
$$
r_1=\frac{r_0}{5^{N_0}\,c_0^{2N_0+2}\, c}
$$ 
and  pick
a maximal $\frac{r_1}{4}$-separated subset $X_0$
of $X$. 
Thanks to Lemma \ref{lemsepsep}, we  write this set $X_0$ as a union 
$$
X_0= X_1\cup\cdots \cup X_{N_0}
$$
of $N_0$ subsets $X_i$ which are $2r_1$-separated. 

In order to construct $\widetilde{f}$ from $f$, 
we will use a finite iterative process based on  Lemma \ref{lemggt}.
Starting with $f_0=f$, we construct by induction
a finite sequence of maps $f_i$ for $i\leq N_0$ and we set 
$\widetilde{f}:=f_{N_0}$.
Using the notations of Lemma \ref{lemggt},  the map
$f_{i}$ is defined from $f_{i-1}$ by letting
\begin{eqnarray*}
\label{eqnfifi}
f_{i}(z) & =& (f_{i-1})_{r_1,x}(z)
\hspace{5ex} {\rm when}\;\; d(z,x)\leq r_1
\;\;\mbox{\rm for some $x$ in $X_{i+1}$},\\
&=&f_{i-1}(z)
\hspace{8ex} {\rm otherwise}
\end{eqnarray*}
so that the Lipschitz constants of these maps satisfy
\begin{equation}
\label{eqnlipfi}
{\rm Lip}(f_{i}) \leq 5c_0^2\, {\rm Lip}(f_{i-1})\leq 5^i{c_0}^{2i}c.
\end{equation}
Indeed, once $f_{i}$ is known to be well defined and to satisfy \eqref{eqnlipfi},
it also satisfies  the bound \eqref{eqnlipgr0}~: 
$\;
{\rm Lip}(f_{i}) < \tfrac{r_0}{c_0^2\, r_1}.
$
Therefore Lemma \ref{lemggt} ensures that  $f_{i+1}$ is well defined 
and, using  \eqref{eqnlipbxr}, that $f_{i+1}$  also satisfies  \eqref{eqnlipfi}~:
\begin{equation*}
\label{eqnlipfi2}
{\rm Lip}(f_{i+1})\leq 5c_0^2\, {\rm Lip}(f_i) \leq 5^{i+1}{c_0}^{2(i+1)}c.
\end{equation*}
Let $\Lambda:=M_{r_1}+25c_0^4+1$. By \eqref{eqnd2gxg} 
and \eqref{eqnlipfi}, one has for any $i\leq N_0$ and $z$ in $X$~:
\begin{equation}
\label{eqnd2fiz}
\| D^2f_i(z)\|+{\rm Lip}(f_i)^2+1\leq 
\Lambda \left(\| D^2f_{i-1}(z)\|+{\rm Lip}(f_{i-1})^2+1\right)\, .
\end{equation}
Since $X_0$ is a maximal $\frac{r_1}{4}$-separated subset
of $X$, every $z$ in $X$ belongs to at least one ball 
$\mathring{B}(x,\tfrac{r}{2})$ where $x$ is in one of the sets $X_{i_0}$.
But then the function $f_{i_0}$ is constant in a neighborhood of $z$. 
Therefore, using \eqref{eqnlipfi} and applying 
$(N_0\!-\! i_0)$ times the bound \eqref{eqnd2fiz}
one deduces that $\widetilde{f}$ is a $\mc C^2$-map 
that satisfies the uniform upper bound
\vs

\mbox{}\hfill
$
\|D^2\widetilde{f}(z)\|\leq ((5^{i_0}c_0^{2i_0}c)^2+1)\Lambda^{N_0-i_0}
\leq \La^{N_0}c^2.
$
\end{proof}

\section{Harmonic maps}
\label{secharmonicmap}

In this chapter we begin the proof 
of the existence part in Theorem \ref{thdfxhx}. 
We first recall basic facts satisfied by harmonic maps.
We explain why a standard compactness argument reduces 
this existence part
to proving 
a uniform upper bound on the distance between $f$
and the harmonic map $h_{_R}$ which is equal to $f$ on the sphere $S(O,R)$.
Then we provide this upper bound 
near this sphere $S(O,R)$.

\subsection{Harmonic functions and the distance function}  
\label{secboundedlap}

\bq
We recall basic facts on the Laplace operator on Hadamard manifolds.
\eq

The Laplace-Beltrami operator $\Delta$ on a Riemannian manifold $X$
is defined as the trace of the Hessian.
In local coordinates, the Laplacian of a function $\ph$ is
\begin{equation}
\label{eqnlaplacian}
\Delta \ph={\rm tr}(D^2\ph)=\tfrac{1}{v}\textstyle\sum_{i,j}
\tfrac{\partial}{\partial x_i}(v\, g_{_X}^{ij}
\tfrac{\partial}{\partial x_j}\ph)
\end{equation}
where $v=\sqrt{\det (g_{_Xij})}$ is the volume density.
The function $\ph$ is said to be harmonic if $\Delta \ph=0$ and
subharmonic if $\Delta \ph\geq 0$. 

We will  need the following basic lemma.

\bl
\label{lemboundedlap}
Let $X$ be a Hadamard manifold with
$K_X\leq -a^2\leq 0$ and $x_0$ be a point in  $X$. 
Then, the function $d_{x_0}$ is subharmonic. More precisely,
the distribution $\Delta\, d_{x_0}\! -\! a$ is  non-negative.
\el

\begin{proof} 
This is  \cite[Lemma 2.5]{BH15}.
\end{proof}

\subsection{Harmonic maps and the distance function}  
\label{secharmonicdist}

\bq
In this section, we recall two useful facts satisfied by a harmonic map $h$~:
the subharmonicity of the functions $d_{y_0}\circ h$, and 
Cheng's estimate for the differential $ Dh$.
\eq

\begin{Def}
\label{defharmonic}
Let $h:X\ra Y$ be a $\mc C^2$ map between two Riemannian manifolds.
The tension field of $h$ is the trace of the second covariant derivative $\tau(h):= {\rm tr} D^2h$.
The map $h$ is said to be {\it harmonic} if $\tau(h)=0$.  
\end{Def}

Note that the tension field  $\tau(h)$ is a {\it $Y$-valued vector field on $X$}, i.e. 
it is a section of the pulled-back of the tangent bundle $TY\ra Y$ under the map 
$h:X\ra Y$.

For instance, an isometric immersion with minimal image is always harmonic.
The problem of the existence, regularity  and uniqueness of harmonic maps under various boundary conditions
is a very classical topic (see \cite{EeLem78}, \cite{Schoen77}, \cite{Jost84},
\cite{donn94}, \cite{Simon96}, \cite{SchoenYau97} or \cite{LinWang08}).  
In particular, when $Y$ is simply connected and has non positive curvature, a harmonic map is always $\mc C^\infty$ i.e. it is indefinitely differentiable,
and is a minimum of the energy functional  
among maps that agree with $h$ outside a compact subset
of $X$.

\bl
\label{lemdefh}
Let $h:X\ra Y$ be a harmonic $\mc C^\infty$ map between Riemannian manifolds.
Let $y_0\in Y$ 
and let $\rho_h:X\ra \m R$ be the function 
$\rho_h:=d_{y_0}\circ h$. 
If $Y$ is Hadamard, the continuous function $\rho_h$ is subharmonic on $X$. 
\el

\begin{proof} See \cite[Lemma 3.2]{BH15}.
\end{proof}

Another crucial property of harmonic maps is the following bound 
for their differential due to Cheng.

\bl
\label{lemcheng}
Let $X$, $Y$ be two Hadamard manifolds with $-b^2\leq K_X\leq 0$. Let $k=\dim X$, 
$z$ be a point in  $X$,  $r>0$
and let $h:B(z,r)\ra Y$ be a harmonic $\mc C^\infty$ map 
such that 
the  image 
$ h(B(z,r))$ lies in a ball of radius $R_0$. 
Then one has the bound
\begin{equation*}
\label{eqncheng}
\|Dh(z)\|\leq 2^5\,k\,\tfrac{1+br}{r}\, R_0\; .
\end{equation*}
\el
In the applications, we will use this inequality with $r=b^{-1}$.

\begin{proof}
This is a simplified version of \cite[Formula 2.9]{Cheng80}. 
\end{proof}

\subsection{Existence of harmonic maps}  
\label{secexistharm}

\bq
In this section we prove Theorem \ref{thdfxhx},
taking for granted Proposition \ref{proexistharm} below. 
\eq

Let $X$ and $Y$ be two Hadamard manifolds 
whose curvatures are pinched $-b^2\leq K_X\leq -a^2<0$ and 
$-b^2\leq K_Y\leq -a^2<0$. Let $k=\dim X$ and $k'=\dim Y$. 
Let 
$f:X\ra Y$ be a $(c,C)$-quasi-\-isometric $\mc C^\infty$ map whose first two covariant derivatives are bounded.

We fix a point $O$ in $X$. 
For $R>0$, we denote by $B_{_R}:=B(O,R)$ 
the closed ball in $X$ with center $O$ and radius $R$
and by $\partial B_{_R}$ the sphere that bounds $B_{_R}$. 
Since the manifold $Y$ is a Hadamard manifold, there exists
a unique harmonic map $h_{_R}:B_{_R}\ra Y$ satisfying the Dirichlet condition
$h_{_R}=f$ on the sphere
$\partial B_{_R}$. 
Thanks to Schoen and Uhlenbeck in \cite{SchoenUhl82} and \cite{SchoenUhl83},
the harmonic map $h_{_R}$ is known to be $\mc C^\infty$ on the closed ball $B_{_R}$.
We denote by 
$$
d(h_{_R},f)=\sup_{x\in B(O,R)}d(h_{_R}(x),f(x))
$$
the distance between these two  maps.
The main step for proving existence in Theorem \ref{thdfxhx} is the following uniform estimate.

\bp
\label{proexistharm}
There exists a constant $\rho\geq 1$ such that, for any $R\geq 1$, one has
$d(h_{_R},f)\leq \rho$.
\ep

The constant $\rho$ is a function of 
$a$, $b$, $c$, $C$, $k$ and $k'$. 
More precisely, when $f$ satisfies \eqref{eqnquasiiso3}, $\rho$ only needs to satisfy
Conditions \eqref{eqnrho1}, \eqref{eqnrho2} and 
\eqref{eqnrho3}.

We briefly recall the classical argument 
used to deduce Theorem \ref{thdfxhx} 
from this Proposition.

\begin{proof}[Proof of Theorem \ref{thdfxhx}]
As explained in Proposition \ref{prosmoothquasi}, 
we may assume 
that the $(c,C)$-quasi-isometric map $f$ is $\mc C^\infty$ with bounded first two covariant derivatives.  
Pick an unbounded increasing sequence of radii $R_n$ 
and let $h_{R_n}:B_{R_n}\ra Y$ be the harmonic $\mc C^\infty$ map 
that agrees with $f$ on the sphere $\partial B_{R_n}$. 
Proposition \ref{proexistharm}
ensures that
the sequence of  maps $(h_{R_n})$ is locally uniformly bounded.
Using the Cheng Lemma \ref{lemcheng} 
it follows that the first derivatives are 
also locally uniformly bounded.
The Ascoli-Arzela theorem implies that, after extracting a subsequence,
the sequence $(h_{R_n})$ converges  uniformly on every ball $B_{_S}$
towards a continuous map $h:X\ra Y$.
Using the Schauder's estimates, one also gets 
a uniform bound for the $\mc C^{2,\al}$-norms of $h_{R_n}$ on $B_{_S}$.
If needed, note that these classical estimates will be recalled in  
Formulas \eqref{eqnharmonic3}, \eqref{eqnschac1} and 
\eqref{eqnschac2} in  Section \ref{secconhar}.
Therefore, using the Ascoli-Arzela theorem again, the sequence $(h_{R_n})$
converges in the $\mc C^2$-norm and the limit map $h$ is $\mc C^2$ and harmonic.
By construction, this limit harmonic map $h$ 
stays within bounded distance from the  
quasi-isometric map $f$.
\end{proof}

\begin{Rem}
{\rm By the uniqueness part of our Theorem 
\ref{thdfxhx} that we will prove in Chapter \ref{secunihar}, 
the harmonic map $h$
which stays within bounded distance from $f$ is unique.
Hence the above argument also proves that 
the whole family of harmonic maps $h_R$ converges to $h$
uniformly on the compact subsets of $X$
when $R$ goes to infinity.}  
\end{Rem}

\subsection{Boundary estimate}
\label{secboundary}

\bq
In this section, we begin the proof of 
Proposition \ref{proexistharm}~: 
we  bound the distance between  $h_{_R}$ and $f$ near the
sphere  $\partial B_{_R}$.
\eq

\bp
\label{proboundary}
Let $X$, $Y$ be Hadamard manifolds and  $k=\dim X$. 
Assume moreover that $ K_X\leq -a^2<0$ and $-b^2\leq K_Y\leq 0$. 
Let $c\geq 1$
and $f:X\ra Y$ be a $\mc C^\infty$ map with $\|Df(x)\|\leq c$ and 
$\|D^2f(x)\|\leq bc^2$. 
Let  $O\in X$, $R>0$ and set $B_{_R}:=B(O,R)$. 

Let $h_{_R}:B_{_R}\ra Y$
be the harmonic $\mc C^\infty$ map  whose restriction to the sphere
$\partial B_{_R}$ is equal to $f$.
Then,  one has for every $x$ in $B_{_R}$~:
\begin{equation}
\label{eqnboundary}
d(h_{_R}(x),f(x))\leq \tfrac{3kbc^2}{a}\, d(x,\partial B_{_R})\; .
\end{equation}
\ep

An important feature of this upper bound is that 
it does not depend on the radius $R$,
provided the distance  $d(x,\partial B_{_R})$ remains bounded.
This is why we call \eqref{eqnboundary} the {\it boundary estimate}.
The proof relies on an idea of Jost in \cite[Section 4]{Jost84}.

\begin{proof} This proposition is already in 
\cite[Proposition 3.8]{BH15}. We give here a slightly shorter proof.
Let $x$ be a point in $B_{_R}$ and 
$y$ be a point in  $Y$ chosen so that $d(y,f(B_{_R}))\geq b^{-1}$ 
and 
\begin{equation}
\label{eqndfxhrx}
 d_y(h_{_R}(x))-d_y(f(x))=
 d(f(x),h_{_R}(x))\; .
\end{equation}
This point $y$ is far away on the geodesic ray starting at $h_{_R}(x)$
and containing $f(x)$.
Let $\ph$ be  the $\mc C^\infty$ function
on the ball $B_{_R}$ defined by 
\begin{equation}
\label{eqnfz1}
\ph(z):= d_y(h_{_R}(z))-d_y(f(z)) -\tfrac{3kb c^2}{a} (R-d_O(z))
\;\; \mbox{\rm for all $z$ in $B_{_R}$.}
\end{equation}
This function is the sum of three functions $\ph=\ph_1+\ph_2+\ph_3$.

The first function $\ph_1: z\mapsto d_y(h_{_R}(z))$  is subharmonic on $B_{_R}$ i.e. one has $\Delta \ph_1\geq 0$. This follows from 
Lemma \ref{lemdefh} and the harmonicity of the map $h_{_R}$.

The second function $\ph_2:z\mapsto -d_y(f(z))$ has a bouded Laplacian, namely
$
|\Delta \ph_2|\leq 3k bc^2.
$
Indeed, since $y$ is far away, Formula 
\eqref{eqnd2rhox0} yields the bound $\| D^2d_y\|\leq 2b$ on $f(B_R)$ 
so that 
\begin{eqnarray*}
\label{eqndeg}
|\Delta \ph_2|=|\Delta (d_{y}\circ f)| \leq
k\|D^2d_{y}\|\|D f\|^2+ k\| Dd_{y}\| \|D^2f\|
\leq 3kbc^2\, .
\end{eqnarray*}

The third function $\ph_3:z\mapsto -\tfrac{3k bc^2}{a} (R-d_O(z))$ has a Laplacian bounded below 
$
\Delta \ph_3\geq 3kb c^2.
$
This follows from Lemma \ref{lemboundedlap} which says that $\Delta d_O \geq a$.

Hence the function $\ph$ is subharmonic~: $\Delta \ph\geq 0$. Since $\ph$ is zero on $\partial B_{_R}$, 
one gets $\ph(x)\leq 0$ as required.
\end{proof}

\section{Interior estimate}
\label{secinterior}

In this chapter we complete the proof of Proposition \ref{proexistharm}.

\subsection{Strategy}  
\label{secstrexi}

\bq
We first explain more precisely the notations and the assumptions 
that we will use in the whole chapter.
\eq

Let $X$ and $Y$ be Hadamard manifolds whose curvatures are pinched 
$-b^2\leq K_X\leq -a^2<0$ and 
$-b^2\leq K_Y\leq -a^2<0$. Let $k=\dim X$ and $k' =\dim Y$. 
We start with a $\mc C^\infty$ 
quasi-isometric map $f:X\ra Y$ whose first and second  covariant derivatives are bounded.
We fix constants $c \geq 1$ and $C>0$ such that
one has, for all $x$, $x'$ in $X$~:
\begin{equation}
\label{eqnquasiiso3}
\| Df(x)\|\leq c
\;\;{\rm , }\;\;\;
\| D^2f(x)\|\leq bc^2
\;\;\;\;{\rm and }\;\;\;
\end{equation}
\begin{equation}
\label{eqnquasiiso2}
c^{-1}\, d(x,x')-C
\;\leq\; 
d(f(x),f(x'))
\;\leq\; 
c\, d(x,x')\; .
\end{equation}
Note that the additive constant $C$ on the right-hand side term of \eqref{eqnquasiiso}
has been removed since the derivative of $f$ is now bounded by $c$. 

\subsubsection{Choosing the radius $\ell_0$}
We fix a point $O$ in $X$. 
We introduce a fixed radius $\ell_0$ depending only on 
$a$, $b$, $k$, $k'$, $c$ and $C$.
This radius $\ell_0$ is only required to satisfy the following three inequalities \eqref{eqnell0}, \eqref{eqnell1} and \eqref{eqnell2}
that will be needed later on.

The first condition we impose on the radius $\ell_0$ is 
\begin{equation}
\label{eqnell0}
b\ell_0 > 1.
\end{equation}

The second condition we impose on the radius $\ell_0$ is 
\begin{equation}
\label{eqnell1}
\ell_0 > \, \frac{(A+b^{-1})c}{ \sin^2(\eps_0/2)}
\;\;{\rm where}\;\; 
\eps_0:=(3c^2M)^{-N},
\end{equation}
where $A$ is the  constant given 
by Lemma \ref{lemproduct}, 
and $M$, $N$ are the constants given by Proposition \ref{prosixrcxthe}.

The third condition we impose on the radius $\ell_0$ is 
\begin{equation}
\label{eqnell2}
16\,e^{\frac{aC}{2}}\,e^{-\frac{a\ell_0}{4\, c}} <
\theta_0
\;\;{\rm where}\;\; 
\th_0:=e^{-bA}\, (\eps_0/4)^{\frac{bc}{a}}.
\end{equation}

\subsubsection{Assuming $\rho$ to be large} 
We want to prove Proposition \ref{proexistharm}. 
For $R>0$, recall that $h_{_R}:B(O,R)\ra Y$ is
the harmonic $\mc C^\infty$ map whose restriction to the sphere
$\partial B(O,R)$ is equal to $f$.
We let 
\begin{equation*}
\label{eqnror}
\rho:=\sup_{x\in B(O,R)}d(h_{_R}(x),f(x))\; .
\end{equation*}
We argue by contradiction. If this supremum $\rho$ is not uniformly bounded with respect to $R$,
we can  fix a radius $R$ such that 
$\rho$ satisfies the following three inequalities \eqref{eqnrho1}, \eqref{eqnrho2} and \eqref{eqnrho3}
that we will use later on.

The first condition we impose on the radius $\rho$ is 
\begin{equation}
\label{eqnrho1}
a\rho > 8kbc^2\ell_0 \, .
\end{equation}

The second condition we impose on the radius $\rho$ is 
\begin{equation}
\label{eqnrho2}
\frac{2^7(a\rho)^2}{\sinh(a\rho/2)} <  \theta_0\, .
\end{equation}

The third condition we impose on the radius $\rho$ is 
\begin{equation}
\label{eqnrho3}
\rho >
4c\ell_0 M\,(2^{10}e^{b\ell_0}k)^{N}
\end{equation}
where  $M$, $N$ are the constants given by Proposition \ref{prosixrcxthe}.

We denote by $x$ a point of $B(O,R)$ where the supremum \eqref{eqnror} is achieved:
$$
d(h_{_R}(x),f(x))=\rho\, .
$$
According to the boundary estimate in Proposition \ref{proboundary}, 
 Condition \eqref{eqnrho1} yields
\begin{equation*}
\label{eqndx0dbr}
d(x,\partial B(O,R))\geq 
\frac{a\rho}{3kbc^2}\geq 2\ell_0\, .
\end{equation*} 
Combined with Condition \eqref{eqnell0}, this    ensures  that 
the ball $B(x,\ell_0)$ with center $x$ and radius $\ell_0$ satisfies the inclusion 
$B(x,\ell_0)\subset B(O,R\!-\!b^{-1})$.
This inclusion will allow us to apply the Cheng 's lemma \ref{eqncheng} at each point $z$ of the ball 
$B(x,\ell_0)$.

\subsubsection{Getting a contradiction}
We will focus on the restrictions of both  maps $f$ and $h_{_R}$ 
to this ball $B(x,\ell_0)$.
We introduce the point $y:=f(x)$. 
For $y_1$, $y_2$ in $Y\smallsetminus\{ y\}$,
we denote by 
$\theta_y(y_1,y_2)$ the angle at $y$ of the geodesic triangle
with vertices $y$, $y_1$, $y_2$. 
For $z$ on the sphere $S(x,\ell_0)$, we will 
analyze the triangle inequality~:
\begin{equation}
\label{eqnththth}
\th_y(f(z),h_{_R}(x))\leq \th_y(f(z),h_{_R}(z))+\th_y(h_{_R}(z),h_{_R}(x))
\end{equation}
and prove that on a subset $U_{\ell_0}$ of the sphere, 
each term on the right-hand side is small
(Lemmas \ref{lemI1} and \ref{lemI2}) 
while the measure of $U_{\ell_0}$ is large enough (Lemma \ref{lemsiwr})
to ensure that the left-hand side is not that small
(Lemma \ref{lemI0}),
giving rise to the contradiction. 
These arguments rely on uniform lower and upper bounds
for the harmonic measures on the spheres of $X$ that will be given in Proposition \ref{prosixrcxthe}. 

We denote by $\rho_h$ the  function on $B(x,\ell_0)$
given by $\rho_h(z)=d(y,h_{_R}(z))$ where again $y=f(x)$.
By Lemma \ref{lemdefh}, this function is subharmonic.

\begin{Def}
\label{defurvrwr}
The subset $U_{\ell_0}$ of the sphere  
$S(x,\ell_0 )$ is given by
\begin{eqnarray}
\label{eqnur}
U_{\ell_0}&=&
\{z\in S(x,\ell_0)\mid \rho_h(z)\geq \rho-\frac{\ell_0}{2c}\, \}\, .
\end{eqnarray}
\end{Def}

\subsection{Measure estimate}  
\label{secmeasure}

\bq
We first observe that  
one can control the size of $\rho_h(z)$ and of $Dh_{_R}(z)$ 
on the ball $B(x,\ell_0)$.
We then derive a lower bound for the measure of $U_{\ell_0}$.
\eq

\bl
\label{lemrhx}
For $z$ in $B(x,\ell_0)$, one has 
\begin{equation*}
\label{eqnrohx}
\rho_h(z)\leq \rho+c\,\ell_0.
\end{equation*}
\el

\begin{proof}
The triangle inequality and \eqref{eqnquasiiso2} give,
for any $z$ in $B(x,\ell_0)$~:
\vspace{1ex}

\mbox{}\hfill$
\rho_h(z)\leq d(h_{_R}(z),f(z))+d(f(z),y)\leq \rho+c\,\ell_0\, .
$\hfill
\end{proof}

\bl
\label{lemdhx}
For $z$ in $B(x,\ell_0)$, one has 
\begin{equation*}
\label{eqndhx}
\|Dh_{_R}(z)\|\leq 2^8 kb\rho.
\end{equation*}
\el

\begin{proof}
For all $z$, $z'$ in $B(O,R)$ with $d(z,z')\leq b^{-1}$, the triangle inequality and \eqref{eqnquasiiso2}
yield
\begin{eqnarray*}
d(h_{_R}(z),h_{_R}(z'))&\leq &
d(h_{_R}(z),f(z))+d(f(z),f(z'))+d(f(z'),h_{_R}(z'))\\
&\leq &\rho+ b^{-1}c +\rho\leq 2\,\rho+c\ell_0 \leq 3\,\rho\, . 
\end{eqnarray*}
For these last two inequalities, we used Conditions \eqref{eqnell0} and 
\eqref{eqnrho1}.
Applying the Cheng's lemma \ref{lemcheng} with $R_0=3\rho$ and 
$r=b^{-1}$,
one then gets for all $z$ in $B(O,R\! -\! b^{-1})$ the bound
$\|Dh_{_R}(z)\|\leq 2^8 kb\rho$.
\end{proof}

We now give a lower bound for the measure of $U_{\ell_0}$.

\begin{Lem}
\label{lemsiwr}
Let $\si=\si_{x,\ell_0}$ be the harmonic measure on the sphere $ S(x,\ell_0)$
at the center point $x$.
Then one has
\begin{eqnarray}
\label{eqnsiur}
\si(U_{\ell_0})&\geq &\frac{1}{3\,c^2}\; .
\end{eqnarray}
\end{Lem}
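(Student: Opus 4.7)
The plan is to exploit that $\rho_h$ is subharmonic on $B(x,\ell_0)$, so it satisfies the sub-mean-value inequality with respect to the harmonic measure $\sigma$ on $S(x,\ell_0)$:
\[
\rho_h(x)\ \leq\ \int_{S(x,\ell_0)} \rho_h(z)\, d\sigma(z).
\]
Since $y=f(x)$, the left-hand side equals $\rho_h(x)=d(y,h_{_R}(x))=d(f(x),h_{_R}(x))=\rho$, precisely because $x$ was chosen as a point where $d(f,h_{_R})$ attains its maximum.

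Next I would split the integral over $U_{\ell_0}$ and its complement $U_{\ell_0}^c$ in the sphere. On $U_{\ell_0}$, Lemma \ref{lemrhx} gives the crude upper bound $\rho_h(z)\leq \rho+c\ell_0$. On $U_{\ell_0}^c$, the very definition \eqref{eqnur} gives $\rho_h(z)< \rho - \tfrac{\ell_0}{2c}$. Combining these,
\[
\rho\ \leq\ (\rho + c\ell_0)\,\sigma(U_{\ell_0})\ +\ \Bigl(\rho - \tfrac{\ell_0}{2c}\Bigr)\bigl(1-\sigma(U_{\ell_0})\bigr).
\]

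Rearranging, the $\rho$ terms cancel and one is left with
\[
\tfrac{\ell_0}{2c}\ \leq\ \Bigl(c\ell_0 + \tfrac{\ell_0}{2c}\Bigr)\,\sigma(U_{\ell_0}),
\]
so
\[
\sigma(U_{\ell_0})\ \geq\ \frac{1}{2c^2+1}\ \geq\ \frac{1}{3c^2},
\]
using $c\geq 1$ in the last step. There is no real obstacle here: the only subtlety is that one must use the two a priori bounds on $\rho_h$ (the trivial upper bound from Lemma \ref{lemrhx} on $U_{\ell_0}$, and the definitional one on the complement) asymmetrically, and then invoke the subharmonic mean-value inequality with respect to harmonic measure — which is exactly the role $\sigma$ plays here.
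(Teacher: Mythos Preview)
Your proof is correct and follows essentially the same approach as the paper: subharmonicity of $\rho_h$ gives the mean-value inequality $\rho=\rho_h(x)\leq\int_{S(x,\ell_0)}\rho_h\,\rmd\si$, which is then split over $U_{\ell_0}$ and its complement using Lemma \ref{lemrhx} and the definition \eqref{eqnur}, leading to $\si(U_{\ell_0})\geq\frac{1}{2c^2+1}\geq\frac{1}{3c^2}$. The only cosmetic difference is that the paper subtracts $\rho$ before integrating, while you cancel it after.
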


\begin{proof}
By Lemma \ref{lemdefh}, the function $\rho_h$ is subharmonic
on the ball $B(x,\ell_0)$.
Hence this function $\rho_h$ is not larger than the harmonic function on the ball
with same boundary values on the sphere $S(x,\ell_0)$. 
Comparing these functions at the center $x$, one gets
\begin{equation}
\label{eqnintrhru}
\int_{S(x,\ell_0)}(\rho_h(z)-\rho)\rmd\si(z) \geq 0\; .
\end{equation}

By Lemma \ref{lemrhx}, the function $\rho_h$ is bounded by $\rho+c\,\ell_0$.
Hence Equation \eqref{eqnintrhru} and the definition of $U_{\ell_0}$ implies
\begin{equation*}
c\, \ell_0\,\si(U_{\ell_0})-\frac{\ell_0}{2c}\,(1-\si(U_{\ell_0}))\geq 0\; 
\end{equation*} 
so that $\si(U_{\ell_0})\geq \frac{1}{3c^2}$.
\end{proof}

\subsection{Upper bound for $\theta_y(f(z),h_{_R}(z))$}
\label{secboundi1}
\bq 
For all $z$ in $U_{\ell_0}$, we give an upper bound for the 
the angle between $f(z)$ and $h_{_R}(z)$ seen from the point $y=f(x)$.
\eq

\bl
\label{lemI1}
For  $z$ in $U_{\ell_0}$, one has
\begin{eqnarray}
\label{eqnI1b}
\theta_y(f(z),h_{_R}(z))&\leq & 4\,e^{\frac{aC}{2}}\,e^{-\frac{a\ell_0}{4\, c}}.
\end{eqnarray} 
\el

\begin{proof}
For $z$ in $U_{\ell_0}$, we consider the triangle with vertices 
$y$, $f(z)$ and $h_{_R}(z)$. Its side lengths satisfy
$$
d(h_{_R}(z),f(z))\leq \rho
\;\; ,\;\;
d(y,f(z))\geq \,\frac{\ell_0}{c} - C
\;\; ,\;\;
d(y,h_{_R}(z))\geq\rho-\frac{\ell_0}{2c}\, ,
$$
where we used successively the definition of $\rho$, 
the quasi-isometry lower bound \eqref{eqnquasiiso2} 
and the definition of $U_{\ell_0}$.
Hence, one gets the following lower bound for the Gromov product
$$
(f(z)|h_{_R}(z))_y\geq \frac{\ell_0}{4c} -\frac{C}{2}.
$$
Since $K_Y\leq -a^2$,  Lemma \ref{lemcomparison} now
yields
\vspace{1ex}

\mbox{}\hfill
$\displaystyle
\theta_y(f(z),h_{_R}(z))\leq 
4\, e^{\frac{aC}{2}}\,e^{-\frac{a\ell_0}{4\, c}}\, .
$ 
\hfill
\end{proof}

\subsection{Upper bound for $\theta_y(h_{_R}(z),h_{_R}(x))$}
\label{secboundi2}
\bq 
For all $z$ in $S(x,\ell_0)$, we give an upper bound for the angle between 
$h_{_R}(z)$ and $h_{_R}(x)$ seen from the point $y=f(x)$.
\eq  

\bl
\label{lemI2}
For all $z$ in the sphere $S(x,\ell_0)$, one has
\begin{eqnarray}
\label{eqnI2b}
\theta_y(h_{_R}(z),h_{_R}(x))&\leq &
\frac{2^5\,(a\rho)^2}{{\sinh}(a\rho/2)}.
\end{eqnarray} 
\el

The proof will rely on the following lemma 
which also ensures that this angle $\theta_y(h_{_R}(z),h_{_R}(x))$
is well defined.

\bl
\label{lemI3} 
For  all $z$ in the ball $B(x,\ell_0)$,
one has
$\rho_h(z)\geq \rho/2.$
\el

\begin{proof}[Proof of Lemma \ref{lemI3}]
Assume by contradiction that there exists a point $z_1$ in the ball $B(x,\ell_0)$ 
such that $\rho_h(z_1)=\rho/2$. Set $r_1:=d(x,z_1)$. One has $0<r_1\leq \ell_0$.
According to  Lemma \ref{lemdhx}, one can bound the differential of $h_{_R}$
on the ball $B(x,\ell_0)$, namely
\begin{eqnarray*}
\sup_{B(x,\ell_0)}\| Dh_{_R}\|
\leq 2^{8}kb\rho\, . 
\end{eqnarray*}  
Hence one has 
\begin{eqnarray*}
\rho_h(z)\leq \frac{3\rho}{4}
\;\; \mbox{\rm for all $z$ in $S(x,r_1)\cap B(z_1, \frac{1}{2^{10}kb}).$} 
\end{eqnarray*}  
By comparison with the hyperbolic plane with curvature $-b^2$,
this intersection contains the trace on the sphere $S(x,r_1)$ of a cone 
$C_\al$ with vertex $x$ and angle $\alpha$ as soon as 
$\sin\frac{\al}{2}\leq \frac{\sinh(2^{-11}/k)}{\sinh(br_1)}$.
For instance we will choose for $\al$ the angle $\al:= e^{-b\ell_0}2^{-10}/k$.

Let $\si'=\si_{x,r_1}$ be the harmonic measure on the sphere $S(x,r_1)$
for the center point $x$.
Using the subharmonicity of 
the function $\rho_h$ as in the proof of Lemma \ref{lemdefh}, one gets the inequality
\begin{equation}
\label{eqnintrhtu}
\int_{S(x,r_1)}(\rho_h(z)-\rho)\rmd\si'(z) \geq 0\; .
\end{equation} 
By Lemma \ref{lemrhx}, the function $\rho_h$ is bounded by $\rho+c\,\ell_0$.
Using the bound $\rho_h(z)\leq \tfrac34\,\rho$ when $z$ is in the cone $C_\alpha$, 
Equation \eqref{eqnintrhtu} now implies that
$$
c\,\ell_0-\frac{\rho}{4}\,\si'(C_\alpha)\geq 0\; .
$$
Using the uniform lower bounds for the harmonic measures on the spheres of $X$ in Proposition \ref{prosixrcxthe},
one gets
$$
\rho\leq 4c\ell_0M\, \al^{-N}
=4c\ell_0 M\,(2^{10}e^{b\ell_0}k)^{N},
$$
which contradicts Condition \eqref{eqnrho3}. 
\end{proof}

\begin{proof}[Proof of Lemma \ref{lemI2}]
Let us first sketch the proof. Let $z$ be a point on the sphere 
$S(x,\ell_0)$.
We denote by $t\mapsto z_t$, for $0\leq t\leq \ell_0$,  
the geodesic segment
between $x$ and  $z$.
By Lemma \ref{lemI3}, the curve 
$t\mapsto h_{_R}(z_t)$ lies outside of the ball 
$B(y,\rho/2)$
and by Cheng's bound on $\| Dh_{_R}(z_t)\|$ 
one controls the length of this curve.

We now  detail the argument. 
We denote by 
$(\rho(y'), v(y'))\in\; ]0,\infty[\times T^1_{y}Y$
the polar exponential coordinates  centered at $y$. For a point $y'$  in $Y\smallsetminus\{ y\}$, they are
defined by the equality  $y'=\exp_y(\rho(y')v_\rho(y'))$.
Since $K_Y\leq -a^2$ the Alexandrov comparison theorem 
for infinitesimal triangles and the Gauss lemma (\cite[2.93]{GHL04}) yield
\begin{equation*}
\label{eqndvy}
{\sinh}(a\rho(y'))\,\| Dv(y')\|\leq a\, .
\end{equation*}
Writing $v_h:=v\circ h_{_R}$, we thus have  for  any $z'$ in $B(x,\ell_0)$~: 
\begin{equation*}
\label{eqndvhdh}
{\sinh}(a\rho_h(z'))\,\| Dv_h(z')\|\leq a\| Dh_R (z')\|\, .
\end{equation*}
Hence,   Lemma \ref{lemI3} yields the inequality
\begin{eqnarray*}
\label{eqnIR2a}
\theta_y(h_{_R}(z), h_{_R}(x))
&\leq &
\ell_0\,\sup_{0\leq t\,\leq \,\ell_0}\| Dv_h(z_t)\|\\
&\leq &
\frac{a\ell_0}{{\sinh}(a\rho/2)}\,\sup_{0\leq t\,\leq \,\ell_0}\| Dh_R(z_t)\|\, .
\end{eqnarray*}
Therefore, using  Lemma \ref{lemdhx} and Condition \eqref{eqnrho1}, one gets
\vs

\mbox{ }\hfill
$\displaystyle
\theta_y(h_{_R}(z), h_{_R}(x))
\leq 
\frac{2^8 kb\rho\,a\ell_0}{{\sinh}(a\rho/2)}\leq 
\frac{2^5 (a\rho)^2}{{\sinh}(a\rho/2)}\,.
$
\end{proof}

\subsection{Lower bound for $\theta_y(f(z),h_{_R}(x))$}
\label{secboundi0}
\bq 
We find a point $z$ in $U_{\ell_0}$ for which the angle between $f(z)$ and $h(x)$ 
seen from $y=f(x)$ has an explicit lower bound.
\eq  

\bl
\label{lemI0} 
There exist two points $z_1$, $z_2$ in $U_{\ell_0}$ such that 
\begin{equation*}
\label{eqnthz1z2}
\th_y(f(z_1),f(z_2))\geq \th_0\, ,
\end{equation*}
where $\th_0$ is the angle given by \eqref{eqnell2}.
\el

\begin{proof}[Proof of Lemma \ref{lemI0}]
Let $\si_0:=\frac{1}{3c^2}$. According to Lemma \ref{lemsiwr}, one has
$\si( U_{\ell_0})\geq\si_0 >0$
Thus, using the uniform upper bounds for the harmonic measures on the spheres of $X$ in Proposition \ref{prosixrcxthe},
one can find 
$z_1$, $z_2$ in $U_{\ell_0}$ such that the angle $\theta_{x}(z_1,z_2)$ 
between $z_1$ and $z_2$ seen from $x$ satisfies
$$
\si_0 \leq M\,\theta_{x}(z_1,z_2)^{\frac{1}{N}}\, .
$$
This can be rewritten as 
\begin{equation}
\label{eqnu1u2}
\theta_{x}(z_1,z_2)\geq \eps_0\, ,
\end{equation}
where $\eps_0$ is the angle introduced in 
\eqref{eqnell1} by the equality
$\si_0=M\,\eps_{_0}^{\frac{1}{N}}$.
Therefore, using  Lemma \ref{lemcomparison}.$a$ and Condition \eqref{eqnell1}, we 
get the following lower bound on the Gromov products
\begin{equation*}
\label{eqnxrzz}
\min( (x|z_1)_{z_2},(x|z_2)_{z_1}))\geq 
\ell_0\sin^2(\eps_0/2)\geq (A+b^{-1})c.
\end{equation*}
Using then Lemma \ref{lemproduct}, one gets 
\begin{equation}
\label{eqnyrfxfx}
\min( (y|f(z_1))_{f(z_2)},(y|f(z_2))_{f(z_1)})\geq b^{-1}.
\end{equation}
This inequality \eqref{eqnyrfxfx} allows us to apply 
Lemma \ref{lemcomparison}.$c$, which gives
\begin{eqnarray*}
\theta_y(f(z_1),f(z_2))
&\geq & 
e^{-b(f(z_1)|f(z_2))_{y}}.
\end{eqnarray*}
Therefore, by Lemma \ref{lemproduct}, one has
\begin{eqnarray*}
\theta_y(f(z_1),f(z_2))&\geq &
e^{-bA}\, e^{-bc\,(z_1|z_2)_{x}}.
\end{eqnarray*}
Using Lemma \ref{lemcomparison}.$b$
and Condition \eqref{eqnu1u2}, one gets
\begin{eqnarray*}
\theta_y(f(z_1),f(z_2))
&\geq&
e^{-bA}\, (\theta_x(z_1,z_2)/4)^{\frac{bc}{a}}\\
&\geq&
e^{-bA}\, (\eps_0/4)^{\frac{bc}{a}}
= \th_0,
\end{eqnarray*}
according to the definition \eqref{eqnell2} of $\th_0$.
\end{proof}

\begin{proof}[End of the proof of Proposition \ref{proexistharm}] 
Using  Lemmas \ref{lemI1} and \ref{lemI2} and the triangle inequality \eqref{eqnththth}, one gets for any two points $z_i=z_{1}$ or $z_{2}$
in $U_{\ell_0}$~: 
\begin{eqnarray*}
\label{eqnsupdvfvr}
\theta_y(f(z_i),h_{_R}(x))
&\leq& 4\,e^{\frac{aC}{2}}\,e^{-\frac{a\ell_0}{4\, c}}+ \frac{2^5(a\rho)^2}{\sinh(a\rho/2)}\\
&< &
\tfrac12\th_0
\hspace{12ex} \mbox{\rm by Conditions \eqref{eqnell2} and \eqref{eqnrho2}.}
\end{eqnarray*}
Therefore, using again a triangle inequality, one has 
$$
\th_y(f(z_1),f(z_2))< \th_0,
$$
which contradicts Lemma \ref{lemI0}. 
\end{proof}

\subsection{Harmonic measures}
\label{secharmea}

\bq
The following proposition gives the uniform lower and upper bounds for the 
harmonic measure  on a sphere for the center which were used in the proof
of Lemmas \ref{lemI3} and \ref{lemI0}.
\eq

\bp
\label{prosixrcxthe}
Let $0<a<b$ and  $k\geq 2$ be an integer.
There exist positive constants 
$M$, $N$ depending only on 
$a$, $b$, $k$ 
such that for every $k$-dimensional Hadamard manifold $X$ with 
pinched curvature $-b^2 \leq K_X\leq -a^2$, for every point $x$ in $X$,
every radius $r>0$ and every angle $\theta\in [0,\pi]$ one has
\begin{equation}
\label{eqnsixrcxthe}
\tfrac{1}{M} \,\theta ^{N} \leq \sigma_{x,r} (C_{x,\theta}) \leq 
M \,\theta ^{\frac{1}{N}} 
\end{equation}
where
$\sigma_{x,r}$ denotes the harmonic measure on the sphere $S(x,r)$
at the point $x$
and where  $C_{x,\theta}$ stands for any 
cone with vertex $x$ and angle $\theta $.
\ep

We recall that, by definition, $\sigma_{x,r}$ 
is the unique probability measure on the sphere 
$S(x,r)$ such that, for every continuous function 
$h$ on the ball $B(x,r)$ which is harmonic in the 
interior $\mathring{B}(x,r)$, one has the equality 
$$
h(x)=\int_{S(x,r)}h(z)\rmd\si_{x,r}(z).
$$

A proof of Proposition \ref{prosixrcxthe} 
is given in 
\cite{BH17}.
It relies on 
various technical tools of the potential theory on
pinched Hadamard manifolds~: the Harnack inequality,
the barrier functions constructed by  Anderson and Schoen in 
\cite{AndersonSchoen85} and upper and lower bounds 
for the Green functions due to 
Ancona in \cite{Ancona87}. 
Related estimates are available like the one by Kifer--Ledrappier in  
\cite[Theorem 3.1 and 4.1]{KiferLedrappier} 
where \eqref{eqnsixrcxthe} is proven for the sphere at infinity
or by Ledrappier--Lim in 
\cite[Proposition 3.9]{LedrappierLim} where the H\"{o}lder regularity 
of the Martin kernel is proven.

\section{Uniqueness of harmonic maps}
\label{secunihar}

In this chapter we prove the uniqueness part in Theorem \ref{thdfxhx}.

\subsection{Strategy}
\label{secstruni}

In other words we will prove the following proposition.

\bp
\label{prouni}
Let $X$, $Y$ be two pinched Hadamard manifolds 
and let $h_0, h_1:X\ra Y$ be two quasi-isometric  harmonic maps 
that stay within bounded distance
of one another:
$$
\sup_{x\in X}d(h_0(x),h_1(x))<\infty\, .
$$
Then one has  $h_0=h_1$.
\ep

When $X=Y=\m H^2$, this proposition was first proven by Li and Tam 
in \cite{LiTam93}.
When both $X$ and $Y$ admit a cocompact group of isometries,
this proposition was then proven by Li and Wang in 
\cite[Theorem 2.3]{LiWang98}.  
The aim of this chapter is to explain how to 
get rid of these extra assumptions.
\vs

Note that the assumption that the $h_i$ are quasi-isometric is useful.
Indeed there does exist non constant bounded harmonic functions on $X$.
Note that there also exist bounded harmonic maps with open images.
Here is a very simple example. Let $0< \la < 1$. The map $h_\la$ from the Poincar\'e unit disk 
$\m D$ of $\m C$ into itself given by $z\mapsto \la z$ is harmonic. 
More generally, for any harmonic map $h:\m D\ra\m D$, the map 
$h_\la:\m D\ra\m D: z\mapsto h(\la z)$ is a harmonic map with bounded image.
\vs

Before going into the technical details, we first explain the strategy
of the proof of this uniqueness. 

\begin{proof}[Strategy of  proof of Proposition \ref{prouni}]
We recall that the distance function 
$x\mapsto d(h_0(x),h_1(x))$ is a subharmonic function on $X$
and that, by the maximum principle, a subharmonic function 
that achieves its maximum value is constant.
Unfortunately since $X$ is non-compact 
we can not a priori ensure that this bounded function 
achieves its maximum. This is why we will use a recentering argument.

We assume, by contradiction, that $h_0\neq h_1$ and 
we choose a sequence of points $p_n$ in $X$ for which the distances
\begin{eqnarray}
\label{eqndhpnhpn}
d(h_0(p_n),h_1(p_n))
\;\;\mbox{\rm converge to}\;\; 
\delta:=\sup_{x\in X}d(h_0(x),h_1(x))>0\, 
\end{eqnarray}
and we set $q_n:=h_0(p_n)$. 

The pinching condition on $X$ and $Y$ ensure that, 
after extracting
a subsequence, 
the pointed metric spaces
$(X,p_n)$ and $(Y,q_n)$ converge in the Gro\-mov--Hausdorff topology to 
pointed metric spaces $(X_\infty,p_\infty)$ and $(Y_\infty,q_\infty)$
which are $\mc C^{2}$ Hadamard manifolds with  $\mc C^1$ Riemannian metrics
satisfying the same pinching conditions
(Proposition \ref{prohadhad}).
Moreover, extracting again a subsequence, the  harmonic map $h_0$ (resp. $h_1$) 
seen as a sequence of maps between 
the pointed Hadamard manifolds $(X,p_n)$ and $(Y,q_n)$ 
converges locally uniformly to a map $h_{0,\infty}$ (resp $h_{1,\infty}$)
between 
the pointed $C^2$ Hadamard manifolds 
$(X_\infty,p_\infty)$ and $(Y_\infty,q_\infty)$.
These harmonic maps $h_{0,\infty}$ and $h_{1,\infty}$ are still harmonic quasi-isometric maps
(Lemma \ref{lemconhar}).

The limit distance function
$x\mapsto d(h_{0,\infty}(x),h_{1,\infty}(x))$ is a subharmonic function on $X_\infty$
that now achieves its maximum $\delta>0$ at the point $p_\infty$. Hence, 
by the maximum principle, this distance function is constant and equal to $\delta$
(Lemma \ref{lemlimequ}).
Generalizing \cite[Lemma 2.2]{LiWang98}, we will see 
in Corollary \ref{cortwohar} that this equidistance property implies that 
both $h_{0,\infty}$ and $h_{1,\infty}$ take their values in a geodesic of $Y_\infty$.
This contradicts the fact that  $h_{0,\infty}$ 
and $h_{1,\infty}$ are quasi-isometric maps,
and concludes this strategy of proof.
\end{proof}

In the following sections of Chapter \ref{secunihar}, we fill in the details of the proof.

\subsection{Harmonic coordinates}
\label{secharcoo}

\bq
We first introduce the so-called harmonic coordinates,
which improve the quasilinear coordinates introduced in Lemma \ref{lemquacoo}.
We refer to \cite{GreeneWu88} or \cite{Jost84} for more details.
\eq

The harmonic coordinates have been introduced by DeTurk and Kazdan
and extensively used by Cheeger, Jost, Karcher, Petersen...
to prove various  compactness results for compact Riemannian manifolds.
Beyond being harmonic, the main advantage of these coordinates 
is that, 
for every $\alpha\in ]0,1[$, they are uniformly bounded in $\mc C^{2,\al}$-norm,
i.e. they are uniformly bounded in $\mc C^2$-norm
and one also has a uniform control of 
the $\alpha$-H\"{o}lder norm of their second covariant derivatives.
Moreover, one has a uniform control on the size of the balls on which these harmonic charts are defined.
This is what the following lemma tells us.

We endow $\m R^k$ with the standard Euclidean structure.

\bl
\label{lemharcoo} 
Let $X$ be a  $k$-dimensional Hadamard manifold  with 
bounded curva\-ture
$-1\leq K_X\leq 0$. Let $0<\al <1$. 
There exist two constants $r_0=r_0(k)>0$ and 
$c_0=c_0(k,\al)>0$ 
such that, for every $x$ in $X$, there exists a 
$\mc C^\infty$-diffeomorphism 
\begin{equation}
\label{eqnpsxbxr}
\Psi_x: \mathring{B}(x,r_0)\stackrel{\sim}{\longrightarrow} U_{x}\subset \m R^{k}
\;\; {\rm with}\;\; 
\Psi_x(x)=0\; ,
\end{equation}
\begin{equation}
\label{eqndpsdps}
\|D\Psi_x\|\leq c_0
\;\; ,\;\;
\|D\Psi_x^{-1}\|\leq c_0
\;\; ,\;\;
\|D^2\Psi_x\|\leq c_0
\;\; ,\;\;
\|D^2\Psi_x^{-1}\|\leq c_0
\; 
\end{equation}
and such that
each component $z_1,\ldots, z_k$ of $\Psi_x$ is a harmonic function.

In particular, one has for all $r< r_0$~:
\begin{equation}
\label{eqnpsbbux}
\Psi_x(B(x,c_0^{-1}r))\subset B(0,r)
\;\;{\rm and}\;\; 
B(0,c_0^{-1}r)\subset\Psi_x(B(x,r)).
\end{equation}
$(iii)$ The  second covariant derivatives of all $\Psi_x$
are also uniformly $\al$-H\"{o}lder~: 
\begin{equation}
\label{eqndpscal}
\|D^2\Psi_x\|_{\mc C^\al}\leq c_0.
\end{equation}
\el

This $\al$-H\"{o}lder semi-norm $\|D^2\Psi_x\|_{\mc C^\al}$ 
is defined as follows.
Using the vector fields 
$\tfrac{\partial}{\partial z_1},\ldots, \tfrac{\partial}{\partial z_k}$ on $\mathring{B}(x, r_0)$ associated to our coordinate system
$\Psi_x=(z_1,\ldots, z_k)$, we reinterpret the tensor $D^2\Psi_x$ 
as a family of vector valued functions on $\mathring{B}(x, r_0)$.
Indeed, we set
$$
T_x^{ij}(z)= 
D^2\Psi_x(z)(\tfrac{\partial}{\partial z_i},\tfrac{\partial}{\partial z_j})\in \m R^k
\; ,\;\; \mbox{\rm for}\; i,j\; \mbox{in}\; \{1,\ldots ,k\},
$$
and the bound \eqref{eqndpscal} means that
\begin{equation}
\label{eqntijcal}
\|D^2\Psi_x\|_{\mc C^\al}:=\max_{i,j}\sup_{z,z'} 
\frac{\| T_x^{ij}(z)-T_x^{ij}(z')\|}{d(z,z')^\al}\leq c_0.
\end{equation}

These uniform bounds \eqref{eqndpsdps} and \eqref{eqndpscal} have three consequences.

First, 
in the harmonic coordinate systems $\Psi_x=(z_1,\ldots, z_k)$, the 
Chris\-toffel coefficients $\Gamma^\ell_{ij}$
are uniformly bounded in $\mc C^{\al}$-norm.
Indeed, these coefficients $(\Gamma^\ell_{ij})_{1\leq \ell\leq k}$
are the components of the vector
$
-D^2\Psi_x(\tfrac{\partial}{\partial z_i},\tfrac{\partial}{\partial z_j})
\in\m R^k$.

Second, on their domain of definition, the transition functions 
\begin{equation}
\label{eqnpsipsical}
\mbox{
$\Psi_{x'}\circ\Psi_x^{-1}$ 
are uniformly bounded in the $\mc C^{2,\al}$-norm.}
\end{equation}

Third, in the coordinate systems $\Psi_x=(z_1,\ldots, z_k)$, the 
coefficients of the metric tensor
\begin{equation}
\label{eqngijcal}
\mbox{
$g_{ij}:=g(\tfrac{\partial}{\partial z_i},\tfrac{\partial}{\partial z_j})$
are uniformly bounded in the $\mc C^{1,\al}$-norm.}
\end{equation}

\begin{proof}[Proof of Lemma \ref{lemharcoo}]
See \cite[p. 62 and 65]{Jost84} or \cite[Section 4]{Petersen94}.
\end{proof}

\subsection{Gromov-Hausdorff convergence}
\label{secgrohau}

\bq
In this section, we recall the definition of Gromov--Hausdorff convergence
for pointed metric spaces
and some of its key properties. We refer to \cite{BuragoBuragoIvanov} for more details.
\eq

\subsubsection{Definition} 
When $X$ is a metric space, we will denote by $d$ or $d_X$ the distance on $X$.
We denote by $B(x,R)$ the closed ball with center $x$ and radius $R$,
and by $\mathring{B}(x,R)$ the open ball.
We recall that a metric space $X$ is {\it proper} if all its balls are compact
or, equivalently, if $X$ is complete and for all $R>0$ and $\eps>0$ 
every ball of radius $R$ can be covered by finitely many balls with radius $\eps$.

We also recall the notion of Gromov--Hausdorff distance  between two (isometry class of proper) pointed metric spaces.
  
\begin{Def}
\label{defgrohau}
The Gromov--Hausdorff distance between two pointed metric spaces
$(X,p)$ and $(Y,q)$ is the infimum of the $\eps>0$ for which there exists
a subset $\mc R$ of $X\times Y$, called a {\it correspondence}, such that~:\\
$(i)$ the correspondence $\mc R$ contains the pair $(p,q)$,\\
$(ii)$ for all $x$ in  the ball $B(p,\eps^{-1})$, there exists $y$ in $Y$ with $(x,y)$ in $\mc R$,\\
$(iii)$ for all $y$ in  the ball $B(q,\eps^{-1})$, there exists $x$ in $X$ with $(x,y)$ in $\mc R$,\\
$(iv)$ for all $(x,y)$ and $(x',y')$ in $\mc R$, one has 
$|d(x,x')-d(y,y')|\leq \eps$.
\end{Def}

Heuristically, this correspondence $\mc R$ must be thought as an $\eps$-rough
isometry between these two balls with radius $\eps^{-1}$.  

Based on this definition, 
a sequence $(X_n,p_n)$ of pointed metric spaces converges 
to a pointed metric space
$(X_\infty ,p_\infty)$ if, for all $\eps>0$, there exists $n_0$ such that for $n\geq n_0$, 
there exists a map $f_n: B(p_n,\eps^{-1})\ra X_\infty$ such that\\
$(\al)$ $d(f_n(p_n),p_\infty)\leq \eps$ ,\\
$(\be)$ $|d(f_n(x),f_n(x'))-d(x,x')|\leq \eps\, $, for all $x$, $x'$ in $B(p_n,\eps^{-1})$,\\
$(\ga)$ the $\eps$-neighborhood of $f_n(B(p_n,\eps^{-1}))$ contains 
the ball $B(p_\infty,\eps^{-1}-\eps)$.

This definition \ref{defgrohau} is only useful for complete metric spaces. Indeed,
the Gromov--Hausdorff topology 
does not distinguish between a metric space and its completion.
It does not  distinguish either between two pointed metric spaces
that are isometric~: 
the Gromov--Hausdorff distance 
is a distance on the set of isometry classes of 
proper pointed metric spaces.
See \cite[Theorem 8.1.7]{BuragoBuragoIvanov}

The following equivalent definition of Gromov--Hausdorff convergence
is useful when one wants to get rid of the ambiguity coming from the group 
of isometries of $(X_\infty,p_\infty)$.

\begin{Fact}
\label{facxpxpxp}
Let $(X_n,p_n)$, for $n\geq 1$, and $(X_\infty,p_\infty)$ be pointed proper 
metric spaces.
The sequence $(X_n,p_n)$ converges to $(X_\infty,p_\infty)$
if and only if there exists a complete metric space $Z$ containing isometrically 
all the metric spaces $X_n$ and $X_\infty$ as disjoint closed subsets, and such
that\\
$(a)$ the sequence of points $p_n$ converges to $p_\infty$ in $Z$,\\
$(b)$ the sequence of closed subsets $X_n$ converges to $X_\infty$ for the Hausdorff topology.
\end{Fact}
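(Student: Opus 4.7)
My plan is to establish both implications of the equivalence, extracting correspondences from the ambient space in one direction and building the ambient space from the correspondences in the other.

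For the direction $(\Leftarrow)$, assuming $Z$ is given with (a) and (b), I would extract correspondences witnessing pointed Gromov--Hausdorff convergence. Given $\varepsilon>0$, by (a) and (b) applied to a bounded neighborhood of $p_\infty$, one can find $n_0$ so that for every $n\geq n_0$ one has $d_Z(p_n,p_\infty)<\varepsilon/3$ and, within the ball $B_Z(p_\infty,2\varepsilon^{-1})$, every point of $X_n$ lies $\varepsilon/3$-close to $X_\infty$ and vice versa. I would then define $f_n:B(p_n,\varepsilon^{-1})\to X_\infty$ by assigning to each $x$ a choice of nearest point $f_n(x)\in X_\infty$ in the ambient space $Z$. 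Since $X_n$ and $X_\infty$ are isometrically embedded, the triangle inequality in $Z$ gives $|d(f_n(x),f_n(x'))-d(x,x')|\leq 2\varepsilon/3$, and properties $(\alpha)$, $(\beta)$, $(\gamma)$ of Definition \ref{defgrohau} follow directly.

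For the direction $(\Rightarrow)$, assuming convergence, I would build $Z$ explicitly. Fix correspondences $\mathcal{R}_n\subset X_n\times X_\infty$ with $(p_n,p_\infty)\in\mathcal{R}_n$, realizing Gromov--Hausdorff distance at most $\varepsilon_n\to 0$. Take $Z_0:=X_\infty\sqcup\bigsqcup_n X_n$ as a set, keep the original metrics on each component, and for $x\in X_n$ and $y\in X_\infty$ define
$$
d(x,y):=\varepsilon_n+\inf_{(x',y')\in\mathcal{R}_n}\bigl(d_{X_n}(x,x')+d_{X_\infty}(y',y)\bigr),
$$
extending between two distinct $X_n$ and $X_m$ by infimizing over intermediate points in $X_\infty$. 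I would then let $Z$ be the metric completion of $Z_0$. The $\varepsilon_n$ shift guarantees that the embeddings of $X_n$ and $X_\infty$ are isometric and that these subsets are pairwise disjoint and closed (they are already complete as proper metric spaces, so completing $Z_0$ adds nothing to them). Property (a) follows from $(p_n,p_\infty)\in\mathcal{R}_n$, which yields $d(p_n,p_\infty)\leq\varepsilon_n$. Property (b), interpreted as local Hausdorff convergence on bounded neighborhoods of the basepoint, follows from the surjectivity-type conditions (ii) and (iii) of Definition \ref{defgrohau}.

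The main obstacle is verifying that $d$ is a genuine metric on $Z_0$, specifically the triangle inequality. The delicate case involves three points distributed among different components, typically $x\in X_n$, $y\in X_\infty$, $z\in X_m$. To check $d(x,z)\leq d(x,y)+d(y,z)$, one unfolds the infima by selecting almost-minimizing pairs $(x',y')\in\mathcal{R}_n$ and $(z',y'')\in\mathcal{R}_m$, and then invokes the rough-isometry property $|d_{X_n}(x_1,x_2)-d_{X_\infty}(y_1,y_2)|\leq\varepsilon_n$ for $(x_i,y_i)\in\mathcal{R}_n$ to convert between $X_n$-distances and $X_\infty$-distances. The additive $\varepsilon_n$ terms built into the cross distances are precisely the slack that absorbs this rough-isometry discrepancy. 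Once this case is handled, symmetry, positivity, and the remaining verifications (including isometry of the component embeddings and closedness) are straightforward.
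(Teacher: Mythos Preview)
Your proposal is correct and follows essentially the same approach as the paper: the paper's sketch only treats the $(\Rightarrow)$ direction, building $Z$ from correspondences $\mc R_n$ via exactly the cross-distance formula $d(x,y)=\eps_n+\inf_{(x',y')\in\mc R_n}(d_{X_n}(x,x')+d_{X_\infty}(y',y))$ you give, extended between $X_m$ and $X_n$ through $X_\infty$. You add the $(\Leftarrow)$ direction and a more explicit discussion of the triangle inequality, both of which the paper omits, but the construction is the same.
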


Statement $(b)$ means that\\
- every point $z$ of $X_\infty$ 
is the limit of a sequence $(x_n)_{n\geq 1}$ with $x_n\in X_n$,\\
-  every cluster point $z\in Z$ of a sequence $(x_n)_{n\geq 1}$ with $x_n\in X_n$
belongs to $X_\infty$.

\begin{proof}[Sketch of proof of Fact \ref{facxpxpxp}] Assume that the sequence 
$(X_n,p_n)$ converges to $(X_\infty,p_\infty)$.
We want to construct the metric space $Z$.
We can choose a sequence $\eps_n\searrow 0$, and correspondences $\mc R_n$ 
on $X_n\times X_\infty$ as in Definition \ref{defgrohau} with 
$p=p_n$, $q=p_\infty$ and $\eps=\eps_n$.
This allows us to construct, for every $n\geq 1$, a metric space $Y_n$ 
which is the disjoint union of $X_n$ and 
$X_\infty$,
which contains isometrically both $X_n$ and  $X_\infty$
and such that the distance between two points $x$ in $X_n$ and $y$ in $X_\infty$
is given by 
\begin{equation}
\label{eqndxxdyy}
d_{Y_n}(x,y)=\inf\{ d_{X_n}(x,x')+\eps +d_{X_\infty}(y',y)\},
\end{equation}
where the infimum is over all the pairs $(x',y')$ which belong to $\mc R_n$.

The space $Z$ is defined as the disjoint union of 
all the $X_n$ and of $X_\infty$. 
The distance on $Z$ is given on 
each union $Y_n:= X_n\cup X_\infty$
by \eqref{eqndxxdyy}
and the distance between points $x$ in $X_m$ and $z$ in $X_n$
with $m\neq n$
is given by 
\begin{equation}
\label{eqndxydyz}
d_Z(x,z)=\inf\{ d_{Y_m}(x,y)+d_{Y_n}(y,z)\},
\end{equation}
where the infimum is over all the points $y$ in $X_\infty$.

Then  $(a)$ follows from $(i)$ and  $(b)$ 
follows from $(ii)$, $(iii)$ and $(iv)$.
\end{proof}

The choice of such isometric embeddings of all $X_n$ and $X_\infty$ in a fixed
metric space $Z$ will be called
a realization of the Gromov-Hausdorff convergence.
Such a realization is not unique. It is useful since it allows us to 
define the notion of a converging sequence of points $x_n$ in $X_n$ to  a limit $x_\infty$ in $X_\infty$ by the condition $d_Z(x_n,x_\infty)\xrightarrow[n\to\infty]{} 0$.

\subsubsection{Compactness criterion}
A fundamental tool in this topic is the following compactness result
for {\it uniformly proper} pointed metric spaces
due to Cheeger--Gromov~:

\begin{Fact}
\label{faccomcri}
Let $(X_n,p_n)_{n\geq 1}$ be a sequence of 
pointed proper metric spaces.
Suppose that, for all $R>0$ and $\eps>0$, there exists 
an integer $N=N(R,\eps)$ such that, for all $n\geq 1$, the ball
$B(p_n,R)$ of $X_n$ can be covered by $N$ balls with radius $\eps$.
Then there exists a subsequence of  $(X_n,p_n)$ 
which converges to a proper pointed metric space $(X_\infty,p_\infty)$.
\end{Fact}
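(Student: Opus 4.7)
The plan is to construct the limit space $X_\infty$ by a diagonal extraction, packaging all the finite nets given by the hypothesis into a single countable "skeleton". For each pair of positive integers $(k,j)$, the hypothesis furnishes an integer $N(k,1/j)$ such that the ball $B(p_n,k)\subset X_n$ can be covered by at most $N(k,1/j)$ balls of radius $1/j$. I would fix for each $n$ and each $(k,j)$ a $1/j$-net $S_{n,k,j}\subset B(p_n,k)$ of cardinality at most $N(k,1/j)$, taking care to include $p_n$ in every $S_{n,k,j}$ and arranging the inclusions $S_{n,k,j}\subset S_{n,k+1,j}$ and $S_{n,k,j}\subset S_{n,k,j+1}$.

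By a diagonal extraction over the countable set of triples $(k,j,(i,i'))$, I may pass to a subsequence along which, for every $(k,j)$, the cardinality of $S_{n,k,j}$ is constant (say equal to $M_{k,j}$), the elements may be labelled $s_n^{k,j,1}=p_n,\, s_n^{k,j,2},\,\ldots,\, s_n^{k,j,M_{k,j}}$ compatibly with the nesting above, and all pairwise distances $d_{X_n}(s_n^{k,j,i},s_n^{k,j,i'})$ converge to limits $\delta^{k,j}_{i,i'}\in[0,\infty)$. These limits satisfy the triangle inequality, so the collection $S^\infty$ of formal symbols $s^{k,j,i}$ carries a pseudo-metric $\delta$. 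Let $X_\infty$ be the metric completion of the quotient $S^\infty/\!\!\sim$ where $u\sim v$ iff $\delta(u,v)=0$, and let $p_\infty$ be the class of $s^{k,j,1}$ (which is independent of $(k,j)$ thanks to compatibility).

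To establish the Gromov--Hausdorff convergence, for a given $\eps>0$ I would pick integers $k,j$ with $k>\eps^{-1}+1$ and $1/j<\eps/3$, and then define for $n$ large a correspondence $\mathcal{R}_n\subset X_n\times X_\infty$ by relating each $x\in B(p_n,\eps^{-1})$ to the class in $X_\infty$ of the index of some $s_n^{k,j,i}$ with $d_{X_n}(x,s_n^{k,j,i})\le 1/j$, and relating each element of $X_\infty$ near $p_\infty$ to a closest element of $S_{n,k,j}$; the distance estimate in Definition~\ref{defgrohau}$(iv)$ then follows from the convergence $d_{X_n}(s_n^{k,j,i},s_n^{k,j,i'})\to\delta^{k,j}_{i,i'}$. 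Properness of $X_\infty$ follows because, for each integer $k$, the image of $S^\infty\cap B(p_\infty,k)$ contains a $2/j$-net of the closed ball $B(p_\infty,k)$ with at most $N(k+1,1/j)$ points, so every closed ball in $X_\infty$ is totally bounded in a complete space, hence compact.

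The principal technical obstacle is the bookkeeping for the diagonal extraction: one must choose the nets $S_{n,k,j}$ coherently enough that the limiting pseudo-distances $\delta^{k,j}_{i,i'}$ are compatible as $(k,j)$ varies, so that the completion of $S^\infty/\!\!\sim$ really is a single pointed metric space into which every finite scale embeds isometrically in the limit. Once this is arranged, the verification of the two remaining points — the Gromov--Hausdorff convergence itself and the properness of $(X_\infty,p_\infty)$ — is essentially a repackaging of the uniform covering hypothesis.
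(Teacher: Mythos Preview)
The paper does not actually prove this Fact; it simply cites \cite[Theorem 8.1.10]{BuragoBuragoIvanov}. Your outline is essentially the standard argument found there: choose uniformly bounded finite nets at each scale, extract a subsequence along which all pairwise distances in the nets converge, build the limit as the completion of the resulting countable pseudo-metric space, and verify convergence via correspondences.

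One small bookkeeping point: you cannot simultaneously demand the nesting $S_{n,k,j}\subset S_{n,k,j+1}$ \emph{and} the cardinality bound $|S_{n,k,j+1}|\le N(k,1/(j+1))$, since the coarser net $S_{n,k,j}$ may already have more points than a minimal $1/(j+1)$-net requires. This is harmless --- all you actually need is that $|S_{n,k,j}|$ is bounded uniformly in $n$ (for instance by $\sum_{j'\le j} N(k+1,1/j')$ if you build the nets by successive refinement), and the rest of your argument goes through unchanged. With that adjustment, your sketch is correct.
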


For a proof see \cite[Theorem 8.1.10]{BuragoBuragoIvanov}.

The following lemma gives us a compactness property for sequences
of Lipschitz functions between pointed metric spaces.

\bl
\label{lemcomcri}
Let $(X_n,p_n)_{n\geq 1}$ and 
$(Y_n,q_n)_{n\geq 1}$ be sequences of 
pointed proper metric spaces
which converge respectively to proper pointed metric spaces 
$(X_\infty,p_\infty)$ and $(Y_\infty,q_\infty)$.
As in Fact \ref{facxpxpxp}, 
we choose metric spaces $Z_X$ and $Z_Y$ which realize these 
Gromov--Hausdorff convergences as Hausdorff convergences.

Let $c>1$ and let $(f_n:X_n\ra Y_n)_{n\geq 1}$ be a sequence of $c$-Lipschitz 
maps such that $f_n(p_n)=q_n$.  
Then there exists a $c$-Lipschitz map $f_\infty:X_\infty\ra Y_\infty$
such that, after extracting a subsequence, the sequence of  maps $f_n$ converges to $f_\infty$. 
This means that for each sequence  $x_n \in X_n$ which converges to 
$x_\infty\in X_\infty$, the sequence $f_n(x_n)\in Y_n$ converges to
$f_\infty(x_\infty)\in Y_\infty$.
\el

\begin{proof}
This follows from basic topology arguments.

{\bf First step}. 
We first choose a point $x_\infty$ in $X_\infty$ and a sequence $x_n$ in $X_n$ 
converging to $x_\infty$. Since the metric space $Z_Y$ is proper and the sequence $f_n(x_n)$ is bounded in $Z_Y$
we can assume after extracting a subsequence that 
the sequence $f_n(x_n)$ converges to a point $y_\infty\in Y_\infty$.
Since the $f_n$ are $c$-Lipschitz, this limit $y_\infty$ does not depend on 
the choice of the sequence $x_n$ converging to $x_\infty$.
We define $f_\infty(x_\infty):=y_\infty$. 

{\bf Second step}.
We choose a countable dense subset $S_\infty\subset X_\infty$ and
use Cantor's diagonal argument to ensure that the first step 
is valid simultaneously for all points $x_\infty$ in $S_\infty$.

{\bf Last step}. One checks that the limit map $f_\infty:S_\infty\ra Y_\infty$ is $c$-Lipschitz.
Hence it extends uniquely as a $c$-Lipschitz map $f_\infty:X_\infty\ra Y_\infty$
and the sequence $f_n$ converges locally uniformly to $f_\infty$.
\end{proof}

\subsubsection{Length spaces and Alexandrov spaces}

We recall a few well-known definitions (see \cite{BuragoBuragoIvanov}). 

A {\it length space} is a complete metric space for which 
the distance $\delta$ between two points is the infimum of the length of the curves
joining them. When $X$ is proper, any two points at distance $\delta$ can be joined by a curve of length $\delta$. Such a curve is called a {\it geodesic segment}.

Let $K\leq 0$. A {\it {\rm CAT}(K)-space} or {\it {\rm CAT}-space with curvature  at most $K$} 
is a length space in which any geodesic triangle $(P,Q,R)$ is 
thinner than a comparison triangle $(\ol{P},\ol{Q},\ol{R})$ 
in the plane $\ol{X}$ of constant curvature $K$. Let us explain what this means. 
A {\it comparison triangle} is a triangle in $\ol{X}$ with the same side lengths. 
For every point $P'$ on the geodesic segment $[P,Q]$ we denote by $\ol{P}'$
the corresponding point on the geodesic segment $[\ol{P},\ol{Q}]$ 
i.e. the point such that $d(P,P')=d(\ol{P},\ol{P}')$. 
{\it Thinner} means that one always has $d(P',R)\leq d(\ol{P}',\ol{R})$.
Note that a {\rm CAT}(0)-space is always simply connected
(See \cite[Corollary II.1.5]{BridsonHaefliger}).
We also recall that in a proper ${\rm CAT}(0)$-space, any two points 
can be joined by a geodesic and that this geodesic is unique. 

Similarly, a {\it metric space with curvature  at least $K$} 
is a length space in which any geodesic triangle $(P,Q,R)$ is 
thicker than a comparison triangle $(\ol{P},\ol{Q},\ol{R})$ 
in the plane $\ol{X}$ of constant curvature $K$. 
{\it Thicker} means that one always has $d(P',R)\geq d(\ol{P}',\ol{R})$.
\vs

The following proposition tells us that these properties 
are closed for the Gromov--Hausdorff topology.

\begin{Fact}
\label{facgrocat}
Let $(X_n,p_n)_{n\geq 1}$ and $(X_\infty,p_\infty)$ be pointed proper 
metric spaces. Let $K\leq 0$.
Assume that the sequence $(X_n,p_n)$ converges to $(X_\infty,p_\infty)$.\\
$(i)$ If the $X_n$'s are length spaces, then $X_\infty$ is also a length space.\\
$(ii)$ If the $X_n$'s are ${\rm CAT}(K)$ spaces,
then $X_\infty$ is also a ${\rm CAT}(K)$ space.\\
$(iii)$ If moreover the $X_n$'s have curvature at least $K$,
then $X_\infty$ too.
\end{Fact}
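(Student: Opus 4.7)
The plan is to realize the Gromov--Hausdorff convergence as a Hausdorff convergence inside a common ambient metric space $Z$ via Fact~\ref{facxpxpxp}, and then to transfer each of the three properties from the $X_n$ to $X_\infty$ by an approximation argument. The fundamental compactness input is the properness of $X_\infty$: any sequence of points of the $X_n$ staying at bounded distance from $p_n$ in $Z$ has a cluster point, and this cluster point must lie in $X_\infty$ because $X_n\to X_\infty$ in the Hausdorff sense inside $Z$.

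For $(i)$, I would show that any two points $x,y\in X_\infty$ admit a midpoint in $X_\infty$; combined with completeness, this characterizes $X_\infty$ as a proper length space. Approximate $x$ and $y$ by sequences $x_n,y_n\in X_n$ with $x_n\to x$ and $y_n\to y$ in $Z$. Since $X_n$ is itself a proper length space, there is a midpoint $m_n\in X_n$ of $x_n$ and $y_n$. The $m_n$'s stay in a bounded region of $Z$, so after extraction they converge to some $m_\infty\in X_\infty$, and passing to the limit in $d(x_n,m_n)=d(y_n,m_n)=\tfrac{1}{2}\,d(x_n,y_n)$ shows that $m_\infty$ is a midpoint of $x$ and $y$.

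For $(ii)$, I would approximate an arbitrary geodesic triangle in $X_\infty$ by triangles in $X_n$. Given $P,Q,R\in X_\infty$, pick $P_n,Q_n,R_n\in X_n$ converging to them. The (unique) $X_n$-geodesic $\gamma_n:[0,d(P_n,Q_n)]\to X_n$ from $P_n$ to $Q_n$ is $1$-Lipschitz, and its image stays in a bounded subset of $Z$ whose cluster points lie in $X_\infty$; an Arzel\`a--Ascoli argument yields a subsequential uniform limit, which is a geodesic in $X_\infty$ from $P$ to $Q$. For any $P'$ on such a limit geodesic, take $P'_n$ at the matching parameter on $\gamma_n$. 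The CAT$(K)$ inequality $d(P'_n,R_n)\leq d(\bar P'_n,\bar R_n)$ in $X_n$ passes to the limit because the comparison construction in the model plane of curvature $K$ depends continuously on the three side lengths and the parameter, and all the relevant $X_n$-distances converge. This gives CAT$(K)$ along the limit geodesic. To obtain it on every geodesic of $X_\infty$, one iterates the midpoint construction of $(i)$: the dyadic midpoints along any geodesic $\gamma\subset X_\infty$ from $P$ to $Q$ arise as limits of the corresponding dyadic midpoints of the $X_n$-geodesics, so $\gamma$ itself is a limit geodesic and inherits the CAT$(K)$ inequality. Part $(iii)$ is entirely parallel: the same approximation scheme applies, the only change being that the CAT$(K)$ inequality is replaced by its reverse (``thicker'') form for a curvature lower bound.

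The main technical obstacle is the compactness step in $(ii)$: the ambient space $Z$ supplied by Fact~\ref{facxpxpxp} need not itself be proper, so some care is required to confine the $\gamma_n$ to a compact subset of $Z$ before extracting a subsequence. This is handled by using the Hausdorff convergence itself to replace each $\gamma_n(t)\in X_n$ by a nearby point of $X_\infty$ lying in a fixed compact ball of the proper space $X_\infty$, where a genuine Arzel\`a--Ascoli argument applies. The remaining ingredients---continuity of the model-plane comparison with respect to side lengths, existence of midpoints in proper length spaces, and closedness of non-strict inequalities under pointwise limits---are standard.
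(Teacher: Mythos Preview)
The paper does not prove this fact; it simply cites \cite[Theorem 8.1.9]{BuragoBuragoIvanov}, \cite[Corollary II.3.10]{BridsonHaefliger}, and \cite[Theorem 10.7.1]{BuragoBuragoIvanov} for the three parts. Your direct sketch is therefore more detailed than the paper, and for $(i)$ and $(ii)$ it is essentially the standard argument. One small circularity in $(ii)$: to say that the dyadic midpoints of an \emph{arbitrary} geodesic $\gamma$ in $X_\infty$ arise as limits of the $X_n$-midpoints, you are implicitly using uniqueness of midpoints in $X_\infty$, which itself must first be deduced from the CAT$(K)$ inequality you have already established along the limit geodesic (apply it to the degenerate triangle $(P,Q,m')$ with $m'$ a competing midpoint). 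Bridson--Haefliger sidestep this by using the $4$-point characterization of CAT$(K)$, a condition on distances only, which passes to Gromov--Hausdorff limits immediately.

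For $(iii)$ the phrase ``entirely parallel'' hides a genuine difficulty: in spaces with curvature bounded below geodesics need not be unique, so an arbitrary geodesic of $X_\infty$ is not obviously a limit of $X_n$-geodesics, and the dyadic-midpoint trick no longer applies. In the paper's intended application this is harmless, since $X_\infty$ is simultaneously CAT$(-a^2)$ and hence uniquely geodesic; but in the generality of the stated fact one must argue via a geodesic-free comparison condition, as the cited Burago--Burago--Ivanov theorem does.
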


\begin{proof}
$(i)$ See  \cite[Theorem 8.1.9]{BuragoBuragoIvanov}.

$(ii)$ See \cite[Corollary II.3.10]{BridsonHaefliger}.

$(iii)$ See \cite[Theorem 10.7.1]{BuragoBuragoIvanov}.
\end{proof}

\subsection{Hadamard manifolds with $\mc C^1$ metrics}
\label{secliprie}

\bq
In this section we focus on $\mc C^2$ Hadamard manifolds 
when the Riemannian metric is only assumed to be $\mc C^1$. 
These Hadamard manifolds will occur in Section \ref{seclimhad} 
as Gromov-Hausdorff limits of pinched $\mc C^\infty$ Hadamard manifolds.
\eq

\subsubsection{Definition}
We need first to clarify the definitions.
We will deal with $\mc C^{2}$ manifolds $X$.
This means that $X$ has a system of charts 
$x\mapsto (x_1,\ldots ,x_k)$ into $\m R^k$ 
for which the transition functions are of class $\mc C^{2}$.
These manifolds will be endowed with a $\mc C^1$ Riemannian metric $g$.
This means that in any $\mc C^2$ chart, the functions 
$g(\tfrac{\partial}{\partial x_i},\tfrac{\partial}{\partial x_j})$
are continuously differentiable.

In general, on such a Riemannian manifold, there might exist 
two different geodesics 
which are tangent at the same point (see \cite{Hartman50}
for an example 
with a $\mc C^{1,\al}$-Riemannian metric).
The following lemma tells us that 
this kind of examples will not occur here since we are dealing only with
${\rm CAT}(0)$-spaces whose curvature is bounded below.
Note that, since the metric tensor is not assumed to be 
twice differentiable, 
the expression ``curvature bounded below'' refers to 
the definitions in Section \ref{secgrohau}.

\begin{Def}
\label{defhadlip}
By a $\mc C^{2}$ Hadamard manifold with a $\mc C^1$ metric, we mean
a $\mc C^{2}$ manifold endowed with a $\mc C^1$ Riemannian metric  which is
${\rm CAT}(0)$ and complete. 
\end{Def}

\subsubsection{Exponential map}

\bl
\label{lemhadlip}
Let $X$ be a $\mc C^{2}$ Hadamard manifold with a $\mc C^1$ metric
of bounded curvature.\\
$a)$ For all $x$ in $X$ and $v$ in $T_xX$ there is a unique geodesic
$t\mapsto {\rm exp}_x(tv)$ starting from $x$ at speed $v$.
This geodesic is of class $\mc C^2$.\\
$b)$ This exponential map induces an homeomorphism 
$\Psi:TX\xrightarrow{\sim}X\times X$ given by,
$\Psi(x,v)=(x,{\rm exp}_x(v))$ for  $x$ in $X$ and $v$ in $T_xX$.
\el

\begin{proof} 
This lemma looks very familiar. But, since
the Christoffel coefficients might not be Lipschitz continuous,
we cannot apply Cauchy--Lipschitz theorem on Existence and Uniqueness of
solutions of differential equations. 

$a)$ 
Since the Christoffel coefficients are continous, 
we can apply Peano--Arzela theorem. 
It tells us that there exists at least one geodesic of class $\mc C^2$
starting from $x$ at speed $v$.  Uniqueness follows 
from the lower bound on the curvature.
 
$b)$ Since $X$ is ${\rm CAT}(0)$, the map $\Psi$ is a bijection.
Since a uniform limit of geodesic on $X$ is also a geodesic,
the map $\Psi$ is continuous. This map $\Psi$ is also proper, therefore
it is an homeomorphism.
\end{proof}

\subsubsection{Geodesic interpolation of $h_0$ and $h_1$}
In the sequel of this section we prove a few technical properties 
of the interpolation $h_t$ of two equidistant Lipschitz maps $h_0$ and $h_1$
with values in a Hadamard manifold (lemmas \ref{lemgeoint}). 
In Section \ref{secequhar}, we will apply this lemma 
to two equidistant harmonic maps $h_0$ and $h_1$ 
obtained by a limit process.
This lemma \ref{lemgeoint} will be used to compare the energy
of $h_0$ and $h_1$ 
with the energy of some small perturbations of $h_0$ and $h_1$.
However, in this section \ref{secliprie}, we do not need to assume $h_0$ and $h_1$ to be harmonic.
Here are the precise assumptions and notations for Lemma \ref{lemgeoint}.

Let $X$ be a $\mc C^2$ Riemannian manifold with a $\mc C^1$ metric and 
$Y$ be a $\mc C^2$ Hadamard manifold with $\mc C^1$ metric.
Let $h_0,h_1:X\ra Y$ be two $\mc C^1$ maps such that
one has
\begin{equation}
d(h_0(x),h_1(x))=1
\;\; \mbox{\rm for all $x$ in $X$.}
\end{equation}
Since $Y$ is a Hadamard manifold, there exists a unique map
\begin{eqnarray}
\label{eqnhtxhtx}
h:[0,1]\times X&\ra&Y\\
\nonumber
(t,x)&\mapsto&h(t,x)=h_t(x)
\end{eqnarray}
such that, for all $x$ in $X$, the path $t\mapsto h_t(x)$ is the unit speed geodesic 
joining $h_0(x)$ and $h_1(x)$. 
This map $h$ is called the {\it geodesic interpolation} of $h_0$ and $h_1$.
By convexity of the distance function, $h$ is Lipschitz continuous.
Therefore, by Rademacher's theorem,
the map $h$ is differentiable on a subset  of full measure  (with respect to the 
Riemannian measure on $X$). 
In particular, there exists a subset $X'\subset X$ of full measure
such that, for all $x$ in $X'$, the map $h$ is differentiable at $(x,t)$ for almost all $t$ in $[0,1]$.
In particular, for all tangent vector $V\in T_xX$ at a point $x\in X'$, the following
derivative 
\begin{equation}
\label{eqnjvtdhv}
t\mapsto J_V(t):= D_xh_t(V)\in T_{h_t(x)}Y
\end{equation}
is well-defined for almost all $t$ in $[0,1]$.
Such a  measurable vector field $J_V$ on the geodesic $t\mapsto h_t(x)$ 
will be called a {\it Jacobi field}.
We denote by 
\begin{equation}
\label{eqntxtdht}
t\mapsto \tau_x(t):= \partial_th_t(x)\in T_{h_t(x)}Y
\end{equation}
the unit tangent vector to the geodesic $t\mapsto h_t(x)$.

\bl 
\label{lemgeoint}
We keep these assumptions and notations. Let $x$ be a point in $X'$ and $V\in T_xX$.\\
$a)$  There exists a constant $\al_V\in \m R$ such that 
\begin{equation}
\langle J_V(t),\tau_x(t)\rangle=\al_V
\; ,\;\;\mbox{for all $t$ in $[0,1]$ where $J_V(t)$ is defined.} 
\end{equation}  
$b)$ There exists a  convex function $t\mapsto \ph_V(t)$ on $[0,1]$ such that 
\begin{equation}
\ph_V(t)=\| J_V(t)\|
\; ,\;\;\mbox{for all $t$ in $[0,1]$ where $J_V(t)$ is defined.} 
\end{equation}  
$c)$ The function  $\psi_V:= (\ph_V^2-\al_V^2)^{1/2}$
is also a convex function on $[0,1]$.
\el

\begin{proof} 
When $Y$ is a $\mc C^\infty$ Hadamard manifold, 
the vector field $J_V$ is a classical Jacobi field and this lemma is well known. 
Indeed, the function $\psi_V$ is the norm of the orthogonal component $K_V$ 
of the Jacobi field $J_V$,
and Inequality \eqref{eqnpsvcon} follows from the Jacobi equation satisfied by this Jacobi field $K_V$. 
We now explain how to adapt the classical proof when $Y$ is only assumed to be a
$\mc C^2$ Hadamard manifold with a $\mc C^1$ metric.

$a)$ Since the path $t\mapsto h_t(x)$ is a unit speed geodesic, one has the equality
$d(h_s(x),h_t(x))=|t-s|$ for all $s$, $t$ in $[0,1]$. 
Differentiating this equality  gives, 
when $J_V(s)$ and $J_V(t)$ are defined,
$$
\langle J_V(s),\tau_x(s)\rangle = \langle J_V(t),\tau_x(t)\rangle\, .
$$
Hence this scalar product is almost surely constant.

$b)$ Let $c$ be a $\mc C^1$ curve 
$c:[-\eps_0,\eps_0]\ra X$ with $c(0)=x$ and $\partial _sc(0)=V$.
Since the space $Y$ is ${\rm CAT}(0)$, when $s>0$, the 
 functions 
$$
t\mapsto \ph_s(t):=\frac{1}{s}d(h_t(c(0)),h_t(c(s))
$$ 
are convex on $[0,1]$.
Let $S_V:=\{ t\in [0,1]\mid J_V(t)\;\mbox{\rm is defined}\}$. This set $S_V$ has full measure
and contains the endpoints $0$ and $1$.
For all $t$ in this set $S_V$, one can compute the limit of these functions 
$\lim_{s\ra 0}\ph_s(t)=\|J_V(t)\|$. 
Since these functions $\ph_s$ are convex,  
the limit $\ph_V(t):=\lim_{s\ra 0}\ph_s(t)$ exists for all $t$ in $[0,1]$ and is a convex function.

$c)$ We slightly change the parametrization of the geodesic interpolation~:
the function 
$k:
(t,s)
\mapsto 
k_t(s):=h_{t-s\al_V}(c(s))
$ 
is well defined when $t-s\al_V$ is in $[0,1]$,
and the paths $t\mapsto k_t(s)$ are also unit speed geodesics.
Hence, for almost all $t$ in $[0,1]$,
the vector field 
\begin{equation}
\label{eqnkvt}
t\mapsto K_V(t):= \partial_sk_t(0)\in T_{k_t(0)}Y
\end{equation}
is well-defined and one has the orthogonal 
decomposition 
\begin{equation*}
J_V(t)=K_V(t)+\al_V \tau_x(t).
\end{equation*}
In particular, one has the equality, 
\begin{eqnarray}
\label{eqnpsivkv}
\psi_V(t)=\| K_V(t)\|\, .
\end{eqnarray}
The same argument as in $b)$ with the Jacobi field $K_V$ proves that the function 
$\psi_V$ is also convex.
\end{proof}

\subsubsection{Geodesic interpolation in negative curvature}
The following Lemma \ref{lemgeointneg}  improves Lemma \ref{lemgeoint} when the curvature of $Y$ is uniformly negative.
Indeed, it tells us that 
the norm $t\mapsto \psi_V(t)$ of the Jacobi field $K_V$
is uniformly convex.

\bl 
\label{lemgeointneg}
We keep the assumptions and notations of Lemma \ref{lemgeoint}. 
Moreover we assume that $Y$ is a ${\rm CAT}(-a^2)$-space with $a>0$.
Then the function $\psi_V$ satisfies the following uniform convexity property, 
\begin{equation}
\label{eqnpsivtl}
\psi_V(t)
\leq \tfrac{\sinh(a(1-t))}{\sinh(a)}\psi_V(0)+
\tfrac{\sinh(at)}{\sinh(a)}\psi_V(1)
\;\;\mbox{ for all $t$ in $[0,1]$.}
\end{equation}
\el

\begin{Rem}
One can reformulate \eqref{eqnpsivtl}
as the following inequality between positive measures
\begin{equation*}
\label{eqnpsvcon}
\tfrac{d^2}{dt^2}\psi_V\geq a^2\psi_V\, .
\end{equation*} 
\end{Rem}

\begin{proof}
This inequality \eqref{eqnpsivtl}  will follow 
from an upper bound for the norm of the Jacobi field $t\mapsto K_V(t)$
by the norm of a well chosen Jacobi field $t\mapsto\ol{K}(t)$
along a geodesic segment 
in the hyperbolic plane of curvature  $-a^2$.
Here are the details of the construction of this 
Jacobi field $t\mapsto \ol{K}(t)$.

Using a slight rescaling, 
we can assume without loss of generality  that the geodesics $t\mapsto k_t(s)$ 
are defined for $t$ in $[0,1]$ and that
the 
Jacobi field $ K_V(t)$ is well defined for $t=0$ and for $t=1$.
We choose $s>0$. Later on we will let $s$ go to $0$.
We set $P_t:=k_t(0)$ and $Q_{s,t}:= k_t(s)$,
and we apply Reshetnyak Lemma \ref{lemreshet} to the four points 
$P_0$, $P_1$,  $Q_{s,1}$, $Q_{s,0}$.
According to this lemma, there exists
a convex quadrilateral $\ol{C}_s$ in the hyperbolic 
plane $\ol{Y}$ of curvature $-a^2$ with vertices 
$\ol{P}_0$, $\ol{P}_1$,
$\ol{Q}_{s,1}$, $\ol{Q}_{s,0}$, 
and 
a $1$-Lipschitz map $j:\ol{C}_s\ra Y$ whose restriction to each of the four geodesic sides 
$\ol{P}_0\ol{P}_1$, $\ol{P}_1\ol{Q}_{s,1}$, 
$\ol{Q}_{s,1}\ol{Q}_{s,0}$, $\ol{Q}_{s,0}\ol{P}_0$
is an isometry onto each of the four geodesic 
segments $P_0P_1$, $P_1Q_{s,1}$, $Q_{s,1}Q_{s,0}$, $Q_{s,0}P_0$. 
Indeed, since $d(\ol{P}_0,\ol{P}_1)=1$, we can assume that 
the two vertices $\ol{P}_0$ and $\ol{P}_1$ do not depend on $s$
and that the quadrilateral $\ol{C}_s$ is positively oriented.

Since the vectors $K_V(0)$ and $K_V(1)$ are orthogonal to the geodesic segment  $t\mapsto k_t(0)$, by Lemma \ref{lemhadlip}, each of  the four successive angles $\th_i$ (for $i=1,\ldots ,4$ )
between the four successive geodesic segments 
$P_0P_1$, $P_1Q_{s,1}$, $Q_{s,1}Q_{s,0}$, $Q_{s,0}P_0$
in $Y$ is equal to $\frac{\pi}{2}+o(1)$, where $o(1)$ denotes a quantity that goes to $0$ when $s$ goes to $0$. 
Since $j$ is 1-Lipschitz,  each of the corresponding four successive angles $\ol{\th}_i$ between the four successive 
geodesic sides $\ol{P}_0\ol{P}_1$, $\ol{P}_1\ol{Q}_{s,1}$, 
$\ol{Q}_{s,1}\ol{Q}_{s,0}$, $\ol{Q}_{s,0}\ol{P}_0$ in the hyperbolic plane $\ol{Y}$
is not smaller than $\th_i$. 
Since the sum of these four angles $\ol{\th}_i$ 
is bounded above by $2\pi$,
each of these four angles $\ol{\th}_i$ 
also satisfies when $s$ goes to $0$~:
\begin{equation}
\label{eqnolthif}
\ol{\th}_i=\frac{\pi}{2}+o(1)\, .
\end{equation}

Denote by $t\mapsto \ol{P}_t$ and $t\mapsto \ol{Q}_{s,t}$ 
the unit speed parametrizations of the sides $\ol{P}_0\ol{P}_1$ and
$\ol{Q}_0\ol{Q}_1$. For  $t$ in $[0,1]$, one has  $j(\ol{P}_t)=P_t$
and $j(\ol{Q}_{s,t})= Q_{s,t}$, and also
\begin{equation}
\label{eqndptqst}
d(P_t,Q_{s,t}) \leq d(\ol{P}_t,\ol{Q}_{s,t})\, 
\end{equation} 
with equality when $t=0$ or $1$~:
\begin{equation}
\label{eqndp0qs0}
d(P_0,Q_{s,0}) = d(\ol{P}_0,\ol{Q}_{s,0})
\;\; \mbox{\rm and}\;\;\;
d(P_1,Q_{s,1}) = d(\ol{P}_1,\ol{Q}_{s,1})
\, .
\end{equation} 
We now focus  on these convex quadrilaterals $\ol{C}_s$ 
in the hyperbolic plane $\ol{Y}$ of curvature $-a^2$. 
We write $\ol{Q}_{s,t}={\rm exp}_{\ol{P}_t}(s\ol{K}_{s,t})$
where $\ol{K}_{s,t}$ belongs to $T_{\ol{P}_t}\ol{Y}$.
Since $K_V(0)$ and $K_V(1)$ are well defined, by   \eqref{eqnkvt}, \eqref{eqnpsivkv}, \eqref{eqnolthif} and \eqref{eqndp0qs0},
the limits 
\begin{equation*}
\ol{K}(0)=\lim_{s\ra 0} \ol{K}_{s,0}
\;\;{\rm and }\;\;
\ol{K}(0)=\lim_{s\ra 0} \ol{K}_{s,1}
\end{equation*}
exist and satisfy
\begin{equation}
\label{eqnk0psiv}
\|\ol{K}(0)\|=\psi_V(0)
\;\;{\rm and }\;\;
\|\ol{K}(1)\|=\psi_V(1)\, .
\end{equation}
Therefore,  the limits
\begin{equation*}
\ol{K}(t)=\lim_{s\ra 0} \ol{K}_{s,t}
\end{equation*}
exist for all $t$ in $[0,1]$. Moreover, by \eqref{eqnkvt}, \eqref{eqnpsivkv} and \eqref{eqndptqst}, they satisfy the inequalities
\begin{equation}
\label{eqnktpsiv}
\psi_V(t)\leq \|\ol{K}(t)\|\, .
\end{equation}
Since the vector field $t\mapsto \ol{K}(t)$ is a Jacobi field
along the geodesic segment $t\mapsto \ol{P}_t$, which is orthogonal
to the tangent vector, 
its norm 
\begin{equation*}
\ol{\psi}(t):=\|\ol{K}(t)\|
\end{equation*}
satisfies the Jacobi differential equation
\begin{equation*}
\frac{d^2}{dt^2}\ol{\psi}= a^2\ol{\psi}\, .
\end{equation*}
Hence, one has the equality
\begin{equation}
\label{eqnpsibar}
\ol{\psi}(t)
= \tfrac{\sinh(a(1-t))}{\sinh(a)}\ol{\psi}(0)+
\tfrac{\sinh(at)}{\sinh(a)}\ol{\psi}(1)
\;\;\mbox{ for all $t$ in $[0,1]$.}
\end{equation}
We now deduce Inequality \eqref{eqnpsivtl}
directly from  \eqref{eqnk0psiv},
\eqref{eqnktpsiv} and \eqref{eqnpsibar}.
\end{proof}

We have used the following existence result for a majorizing
quadrilateral due to Reshetnyak in \cite{Reshetnyak68}. 
More precisely we have used the  boundary of
this majorizing quadrilateral $\ol{C}$. 

\bl
\label{lemreshet}
Let $Y$ be a ${\rm CAT}(-a^2)$ metric space and $\ol{Y}$
be the hyperbolic plane of curvature $-a^2$.
Then, for every four points $P_0$, $P_1$, $Q_1$ $Q_0$ in $Y$ 
there exists a convex quadrilateral $\ol{C}$ in $\ol{Y}$
with vertices $\ol{P}_0$,$\ol{P}_1$, $\ol{Q}_1$,$\ol{Q}_0$
and a $1$-Lipschitz map $j:\ol{C}\ra Y$ which is an isometry 
on each of the four geodesic sides of $\ol{C}$, and which sends 
each of these four vertices $\ol{R}_i$ 
on the corresponding given point $R_i$ in $Y$.
\el

\subsection{Limits of Hadamard manifolds}
\label{seclimhad}

\bq
In this section we describe the Gromov--Hausdorff 
limits of pinched Hadamard manifolds.
\eq

The following proposition is a variation on the Cheeger
compactness theorem.

\bp
\label{prohadhad}
Let $(X_n,p_n)_{n\geq 1}$ be a sequence of $k$-dimensional 
pointed Hadamard
manifolds with pinched curvature $-1\leq K_{X_{_n}}\leq -a^2\leq 0$.\\
$a)$ There exists a subsequence of $(X_n,p_n)$ which converges to a 
pointed proper {\rm CAT}-space $(X_\infty,p_\infty)$
with curvature  between $-1$ and $-a^2$.\\
$b)$ This space $X_\infty$ has a structure of a $\mc C^{2}$
Hadamard manifold such that the distance on $X_\infty$ comes from a $\mc C^1$ Riemannian metric.
\ep

The same proof shows that
$X_\infty$ has a structure of a $\mc C^{2,\al}$
Hadamard manifold with a $\mc C^{1,\al}$ Riemannian metric,
for every $0<\al <1$. We will not use this improvement.

Even though this proposition follows from 
\cite[Theorem 72 p. 311]{Petersen16}, we  give a sketch of proof below. 

\begin{proof}
$a)$ The assumption on the curvature of $X_n$ ensures that for 
each $R>0$, one has  uniform estimates for the volumes of balls with radius $R$ in $X_n$~:
for all $n\geq 1$ and $x$ in $X_n$, one has
$$
{\rm vol} B_{\m R^k}(O,R)
\leq
{\rm vol} B_{X_n}(x,R)
\leq
{\rm vol} B_{\m H^k}(O,R) 
\, .
$$
Therefore, for each $0<\eps<R$, there exists 
an integer $N=N(R,\eps)$ such that every ball
$B_{X_n}(p_n,R)$ of $X_n$ can be covered by $N$ balls of radius $\eps$.
Hence, according to Fact \ref{faccomcri},
there exists a subsequence of  $(X_n,p_n)$ 
which converges to a proper pointed metric space $(X_\infty,p_\infty)$.
According to Fact \ref{facgrocat}, $X_\infty$ is a {\rm CAT}-space with curvature
between $-1$ and $-a^2$. 

$b)$ It remains to check that $X_\infty$ is a $\mc C^{2}$ manifold with
a $\mc C^1$ Riemannian metric.
We isometrically imbed the converging sequence $(X_n,p_n)$
in a proper metric space $Z$ as in Fact \ref{facxpxpxp}.
We fix $r_0>0$ and $c_0>0$ as in  Lemma \ref{lemharcoo} where we introduced the harmonic coordinates, and we choose a maximal $\tfrac{r_0}{2c_0}$-separated
subset $S_\infty$ of $X_\infty$. 
For each $x_\infty$ in $S_\infty$, we choose a sequence $x_n$ of points in $X_n$ that
converges to $x_\infty$. 
By \eqref{eqndpsdps}, the harmonic charts 
\begin{equation}
\label{eqnpsixnb}
\Psi_{x_n}:\mathring{B}(x_n,\tfrac{r_0}{c_0})\ra \m R^k
\end{equation}
are uniformly bi-lipschitz. More precisely, for all $z$, $z'$ in $\mathring{B}(x_n,\tfrac{r_0}{c_0})$,
one has
$$
c_0^{-1}d(z,z')\leq \|\Psi_{x_n}(z)-\Psi_{x_n}(z')\|\leq c_0\, d(z,z').
$$
Hence after extracting a subsequence, this sequence of charts $\Psi_{x_n}$ converges
towards a bi-lipschitz map 
\begin{equation}
\label{eqnpsixib}
\Psi_{x_\infty}:\mathring{B}(x_\infty,\tfrac{r_0}{c_0})\ra \m R^k.
\end{equation}
The extraction can be chosen simultaneously 
for all the points $x_\infty$ in the countable set  $S_\infty$.
This collection of maps $\Psi_{x_\infty}$ endows $X_\infty$ with a structure
of a Lipschitz manifold.

We now want to prove that this manifold $X_\infty$
is a $\mc C^2$ manifold. Indeed we will check that, for any $x_\infty$ and $x'_\infty$
in $S_\infty$, the transition functions
$\Ph_{x'_\infty}\circ\Ph_{x_\infty}^{-1}$ are of class $\mc C^2$.
This just follows from the fact that these transition functions 
are uniform limit on compact sets of 
the transition functions $\Ph_{x'_n}\circ\Ph_{x_n}^{-1}$ which are, by \eqref{eqnpsipsical},
uniformly bounded in the $\mc C^{2,\al}$-norm.

Finally, we check that the distance $d$ on $X_\infty$
comes from a $\mc C^1$ Riemannian metric on $X_\infty$. 
By \eqref{eqngijcal},  the Riemannian metrics 
$(g_n)_{ij}$ on $X_n$, seen as functions in the charts $\Psi_{x_n}$ of $X_n$, 
are uniformly bounded in the $\mc C^{1,\al}$-norm.
Extracting again a subsequence, 
there exists a $\mc C^1$
Riemannian metric $(g_\infty)_{ij}$ 
in the charts $\Psi_{x_\infty}$ of $X_\infty$
such that
the  sequence of metrics
\begin{equation}
\label{eqngijgij}
\mbox{\rm
$(g_n)_{ij}$ converges  to $(g_\infty)_{ij}$ in the $\mc C^1$ topology.}
\end{equation}
Let $d_\infty$ be the distance on $X_\infty$ associated with $g_\infty$. We check that $d_\infty=d$
on $X_\infty$. 
Let $x'_\infty$ and $x''_\infty$ be points in $X_\infty$.
They are limits of points $x'_n$ and $x''_n$ in $X_n$.
Let $c_n$ be the geodesic segment joining $x'_n$ to $x''_n$.
Extracting once more a subsequence, 
the curves $c_n$ converge uniformly to a 
curve joining $x'_\infty$ and $x''_\infty$.
This curve must be a geodesic for $g_\infty$.
This proves that $d_\infty(x'_\infty,x''_\infty)=d(x'_\infty,x''_\infty)$.
\end{proof}

\subsection{Convergence of harmonic maps}
\label{secconhar}

\bq
We now explain how to obtain the limit harmonic maps.
\eq

We first notice that we can extend Definition \ref{defharmonic}~: 
A  $\mc C^2$ map $h:X\ra Y$  between two $\mc C^2$ Riemannian manifolds with $\mc C^1$ metrics $X$ and $Y$
is said to be {\it harmonic} if  its tension field  is zero, namely $\tau(h):= {\rm tr} D^2h=0$.  
Indeed, the tension field
of a $\mc C^2$ map $h$ at a point $x$ depends only on the $2$-jet of $h$ and on the $1$-jet of the metrics of $X$ and $Y$  at the points $x$ and $h(x)$. 
More precisely, writing $h$ in a coordinate system $h:(x_1,\ldots,x_k)\mapsto (h_1,\ldots, h_k')$, the 
equation  ${\rm tr} D^2h=0$ reads as

\begin{equation}
\label{eqnharmonic3}
\Delta h_\la =
- \sum_{ij\mu\nu}g^{ij}\Gamma^\lambda_{\mu\nu}
\frac{\partial h_{\mu}}{\partial x_i}
\frac{\partial h_{\nu}}{\partial x_j} \qquad (\lambda\leq k' )
\end{equation}
where 
$\Gamma^\lambda_{\mu\nu}$ are the Christoffel coefficients on $Y$
and where  $\Delta$ is the Laplace operator on $X$ defined as in
\eqref{eqnlaplacian}~:
\begin{equation}
\label{eqnlaplacian2} 
\Delta:\ph\mapsto \tfrac{1}{v}\textstyle
\tfrac{\partial}{\partial x_i}(v\, g^{ij}
\tfrac{\partial \ph}{\partial x_j})
\end{equation}
where $v=\sqrt{\det(g_{ij})}$ denotes the volume density on $X$. 
See \cite[Section 1.3]{Jost84} for more details.

\bl
\label{lemconhar}
Let $(X_n,p_n)_{n\geq 1}$ and $(Y_n,q_n)_{n\geq 1}$ be two sequences of 
equi\-dimensional pointed Hadamard manifolds with curvature between $-1$ and $0$.
Let $c,C>0$ and let $h_n:X_n\ra Y_n$ be a sequence of $(c,C)$-quasi-isometric harmonic maps
such that $\sup_n d(h_n(p_n),q_n)<\infty$.
After extracting a subsequence, 
the sequences of pointed metric spaces $(X_n,p_n)$ and $(Y_n,q_n)$ converge respectively 
to pointed $\mc C^2$ manifolds with $\mc C^1$ Riemannian metrics $(X_\infty,p_\infty)$ and $(Y_\infty, q_\infty)$, and
the sequence of maps $h_n$ converges to a $c$-quasi-isometric  map
$h_\infty:X_\infty\ra Y_\infty$. 
This map $h_\infty$ is of class $\mc C^2$ and is harmonic.
\el

\begin{proof}
Since they are harmonic, the maps $h_n$ are $\mc C^\infty$.
Since these maps are also 
$(c,C)$-quasi-isometric,
according to Cheng's Lemma \ref{lemcheng}, 
there exists some constant $C'>0$
such that the maps $h_n$ are $C'$-Lipschitz.
The first two statements then follow from Proposition  \ref{prohadhad}
and Lemma \ref{lemcomcri}. 

It remains to show that the limit map $h_\infty$ is of class $\mc C^2$ and harmonic.
The key point will be a uniform bound for the $\mc C^{2,\alpha}$-norm of $h_n$ 
in suitable harmonic coordinates.
Let $k:=\dim X_n$ and $k':=\dim Y_n$.
Let $x_\infty$ be a point in $X_\infty$ and $y_\infty:=h_\infty(x_\infty)$.
Let $x_n$ be a sequence in $X_n$ converging to $x_\infty$ and let 
$y_n:=h_n(x_n)$. 

We look at the maps $h_n$ through the harmonic charts
$\Psi_{x_n}$ of $X_n$ and 
$\Psi_{y_n}$ of $Y_n$
as in \eqref{eqnpsixnb}. By \eqref{eqnpsixib},
these charts converge respectively to charts
$\Psi_{x_\infty}$ of $X_\infty$ and 
$\Psi_{y_\infty}$ of $Y_\infty$.
By \eqref{eqngijgij}, in these charts, 
the Riemannian metrics of  $X_n$ and $Y_n$ 
converge to the Riemannian metrics
of $X_\infty$ and $Y_\infty$ in the $\mc C^{1,\al}$-norm.

Let $0<\alpha <1$.
When one writes Equation \eqref{eqnharmonic3} for $h=h_n$ in these harmonic coordinates 
on a small open ball $\Omega:=\mathring{B}(0,\frac{r_0}{c_0C'})$ of $\m R^k$ that does not depend on $n$, one gets
\begin{equation}
\label{eqnharmonic4}
\sum_{ij}g^{i j} \frac{\partial^2 h_{\lambda}}{\partial z_i \partial z_j}
=- \sum_{ij\mu\nu}g^{ij}\Gamma^\lambda_{\mu\nu}
\frac{\partial h_{\mu}}{\partial z_i}
\frac{\partial h_{\nu}}{\partial z_j}.
\end{equation}
The  coefficients of this equation depend on $n$, but 
Lemma \ref{lemharcoo} ensures that they are uniformly bounded in the $\mc C^{\al}$-norm.
The Schauder estimates  
for  functions $u$ on $\Omega$ and compact sets $K$ of $\Omega$ 
as in \cite[Theorem 70 p. 303]{Petersen16}  thus tell us that 
\begin{equation}
\label{eqnschac1}
\|u\|_{\mc C^{1,\al},K}\leq M\, (\|\Delta u\|_{\mc C^{0},\Omega}+\|u\|_{\mc C^{\al},\Omega})
\end{equation}
\begin{equation}
\label{eqnschac2}
\|u\|_{\mc C^{2,\al},K}\leq M\, (\|\Delta u\|_{\mc C^{\al},\Omega}+\|u\|_{\mc C^{\al},\Omega})
\end{equation}
for some constant $M=M(k,\Omega,K)$.
Therefore, since the maps $h_n$ are $C'$-Lipschitz, combining \eqref{eqnharmonic3}, 
\eqref{eqnschac1}  and \eqref{eqnschac2} yields a uniform bound for the $\mc C^{2,\al}$-norm of the maps $h_n$.
Hence the Ascoli Lemma ensures that, after extracting a subsequence, 
the sequence of maps $h_n$ converges towards a $\mc C^2$ map in the $\mc C^2$ topology.
This proves that the limit map $h_\infty$ is $\mc C^2$ and is harmonic.  
\end{proof}

\subsection{Construction of the limit equidistant harmonic maps}
\label{secconequ}

\bq
We now explain why the limit harmonic maps $h_{0,\infty}$ and $h_{1,\infty}$ constructed in 
the strategy of Proposition \ref{prouni} are equidistant.
\eq

We first sum up 
the construction of these limit maps.

We start with two Hadamard manifolds $X$, $Y$ 
of bounded curvatures,
and  with two distinct quasi-isometric  harmonic maps  $h_0, h_1:X\ra Y$
such that $\de:=d(h_0,h_1)$ is finite
and non-zero.
We choose a sequence of points $p_n$ in $X$ 
such that $
d(h_0(p_n),h_1(p_n))$
converges to $\de$
and we set $q_{0,n}:=h_0(p_n)$ and $q_{1,n}:=h_1(p_n)$. 
We will frequently replace this sequence by subsequences
without mentioning it.
By Proposition \ref{prohadhad}, there exist 
two $\mc C^{2}$ Hadamard manifolds
with  $\mc C^1$ metrics $(X_\infty,p_\infty)$ and $(Y_\infty,q_{0,\infty})$ which are the Gromov--Hausdorff limits
of the pointed metric spaces
$(X,p_n)$ and $(Y,q_{0,n})$. These limit Hadamard manifolds also 
have bounded curvature.
We denote by $q_{1,\infty}$ the limit in $Y_\infty$
of the sequence $q_{1,n}$.
By the Cheng Lemma \ref{lemcheng}, 
the harmonic quasi-isometric  maps $h_0$
and $h_1$ are Lipschitz continuous.
By Lemma \ref{lemcomcri}, there exists a limit map  $h_{0,\infty}: (X_\infty,p_\infty)\ra (Y_\infty,q_{0,\infty})$
of the sequence of Lipschitz continuous maps 
$h_0:(X,p_n)\ra (Y,q_{0,n})$. There exists also  
a limit map  $h_{1,\infty}: (X_\infty,p_\infty)\ra (Y_\infty,q_{1,\infty})$
of the sequence of Lipschitz continuous maps 
$h_1:(X,p_n)\ra (Y,q_{1,n})$.
By Lemma \ref{lemconhar}, 
these limit maps $h_{0,\infty}$ and $h_{1,\infty}$ are still harmonic quasi-isometric maps.

\bl
\label{lemlimequ}
With the above notation, the two limit harmonic quasi-iso\-metric maps 
$h_{0,\infty}$, $h_{1,\infty}$ 
are equidistant. More precisely, 
for all $x$ in $X_\infty$, one has 
$d(h_{0,\infty}(x),h_{1,\infty}(x))=\de>0$
where $\de:=d(h_0,h_1)$.
\el

We will apply this lemma to two pinched Hadamard manifolds $X$, $Y$.
In this case, the limit $\mc C^2$ Hadamard manifolds 
$X_\infty$, $Y_\infty$ will also be pinched.

\begin{proof}
Let $\Delta_\infty$ be the Laplace operator on $X_\infty$
defined as in  \eqref{eqnlaplacian2}.
We first check that the  function
$
\ph_\infty:x\mapsto d(h_{0,\infty}(x),h_{1,\infty}(x))
$
is subharmonic on $X_\infty$. This means that 
$\Delta_\infty\ph_\infty$ is a positive measure on $X_\infty$.
Assume first that the Riemannian metric on $Y_\infty$
is $\mc C^\infty$.
In this case, $\ph_\infty$ is the composition of a harmonic map 
$h=(h_0,h_1): X_\infty\ra Y_\infty\times Y_\infty$ and of
a convex $\mc C^\infty$-function $F=d:Y_\infty\times Y_\infty\ra \m R$,
so that the function $\ph_\infty$ is subharmonic on $X_\infty$
because of
the formula
\begin{equation*}
\label{eqndefh}
\Delta_\infty (F\circ h) =\sum_{1\leq i\leq k} D^2F(D_{e_i}h,D_{e_i}h)+ \langle DF ,\tau(h)\rangle\, ,
\end{equation*}
where 
$(e_i)_{1\leq i\leq k}$ is an orthonormal basis of the tangent space to $X$.

Since the Riemannian metric on $Y$
might not be of class $\mc C^\infty$, we 
will use instead a limit argument.
We fix a point $x_\infty$ in $X_\infty$.
In a chart $(x_1,\ldots,x_k)$, the Laplace operator 
$\Delta_\infty$ of 
the Riemannian metric $(g_\infty)_{ij}$ of $X_\infty$ reads  
as 
\begin{equation}
\label{eqndelpsi}
\psi\mapsto \Delta_\infty\psi=\frac{1}{v_\infty}
\frac{\partial}{\partial x_i}(v_\infty g_\infty^{ij}\frac{\partial\psi}{\partial x_j})\, ,
\end{equation}
where $v_\infty$ is the volume density.
We want to prove that 
for every 
$\mc C^2$ function $\psi\geq 0$ with compact support in 
a small neighborhood of $x_\infty$,
one has 
\begin{equation}
\label{eqnphdeps}
\int_{\m R^k} \ph_\infty \, \Delta_\infty\psi \, v_\infty {\rm d} x
\geq 0\, .
\end{equation}  

This function $\ph_\infty$ on the pointed metric space
$(X_\infty,p_\infty)$
is equal to the limit of the sequence of functions 
$
\ph_n:x\mapsto d(h_{0}(x),h_{1}(x))
$ 
on the pointed metric spaces $(X,p_n)$,
as defined in Lemma \ref{lemcomcri}.
Note that the dependence on $n$ comes from the 
base point $p_n$ which moves with $n$.  
We choose  a sequence $x_n$ in  $X_n$ converging to $x_\infty$. 
As in the proof of Lemma \ref{lemconhar},
we look at the functions $\ph_n$ through the harmonic charts
$\Psi_{x_n}$ of $X_n$. By \eqref{eqnpsixib},
these charts converge to a chart
$\Psi_{x_\infty}$ of $X_\infty$.
By \eqref{eqngijgij}, in these charts $(x_1,\ldots, x_k)$
the Riemannian metric $(g_n)_{ij}$ of  $X_n$ 
converge to the Riemannian metric $(g_\infty)_{ij}$
of $X_\infty$ in $\mc C^1$ topology.

Since, by the above argument, the functions $\ph_n$ are subharmonic for the metric $(g_n)_{ij}$, for every 
$\mc C^2$ function $\psi\geq 0$ with compact support in these charts,
one has at each step $n$
\begin{equation}
\label{eqnphnden}
\int \ph_n \Delta_n\psi\, v_n{\rm d} x\geq 0 
\end{equation}
where $\Delta_n$ and $v_n$ are the Laplace operator 
and the volume density of the metric $(g_n)_{ij}$.
Letting $n$ go to $\infty$ in  \eqref{eqnphnden} gives \eqref{eqnphdeps}.
This proves that the function $\ph_\infty$ is subharmonic.

By construction, this
subharmonic function $\ph_\infty$ on $X_\infty$
achieves its maximum $\delta>0$ at the point $p_\infty$. 
By \eqref{eqndelpsi}, the Laplace operator 
is an elliptic linear differential operator
with continuous coefficients.
Hence, by the strong  maximum principle  in \cite[Theorem 8.19 p.198]{GilbargTrudinger}, 
this function $\ph_\infty$ is constant and equal to $\delta$.
\end{proof}

The aim of Sections \ref{secequhar} 
and \ref{secequneg} is to prove that such 
equidistant harmonic quasi-isometric maps 
$h_{0,\infty}$ and $h_{1,\infty}$ can not exist
(Corollary \ref{cortwohar}) when $Y_\infty$ is pinched.
This will conclude the proof of Proposition \ref{prouni}.

\subsection{Equidistant harmonic maps}
\label{secequhar}

\bq
We first study equidistant harmonic maps without any pinching assumption.
\eq

The following lemma  \ref{lemtwohar} extends \cite[Lemma 2.2]{LiWang98}
to the case where the source space $X$ 
is only assumed to be a $C^2$-Hadamard manifold.

\bl
\label{lemtwohar} 
Let $X$, $Y$ be two  $\mc C^{2}$ Hadamard manifolds
with $\mc C^1$ Riemannian metrics of bounded curvature.
Let $h_0, h_1:X\ra Y$ be two harmonic maps 
such that the distance function $x\mapsto d(h_0(x),h_1(x))$
is constant. For $t$ in $[0,1]$, let $h_t$ be the geodesic interpolation of $h_0$ and $h_1$ as in \eqref{eqnhtxhtx}.
Then for almost all $x$ in $X$, $t$ in $[0,1]$ 
and $V$ in $T_xX$, one has 
\begin{equation}
\label{eqndhv0t1}
\|Dh_0(V)\|=\| Dh_t(V)\|=\| Dh_1(V)\|\, .
\end{equation}
\el

Note that we can not conclude that Equality \eqref{eqndhv0t1}
is valid for all $x$ and $t$ since the interpolation $h_t$ 
might not be of class $\mc C^1$.

We will use the following straightforward inequality
for convex functions.

\bl
\label{lemphtpht}
Let $t\mapsto\Phi_t$  be a non-negative convex function on $[0,1]$. 
Then, for all $t$ in $[0,\frac12]$, one has
\begin{equation}
\label{eqnphtpht}
\Ph_t+\Ph_{1-t}\leq \Ph_0 +\Ph_1-2t\,(\Ph_0+\Ph_1-2\Ph_{1/2})\, .
\end{equation}
\el

\begin{proof}[Proof of Lemma \ref{lemphtpht}]
We just add the following two convexity  inequalities
$\Ph_t\leq (1-2t)\Ph_0 +2t\Ph_{1/2}$ and
$\Ph_{1-t}\leq (1-2t)\Ph_1 +2t\Ph_{1/2}$. 
\end{proof}

\begin{proof}[Proof of Lemma \ref{lemtwohar}]
The idea is to construct two small perturbations $f_\eps$
and $g_\eps$ of the harmonic maps $h_0$ and $h_1$
with support in a compact set $K$ of $X$,
and to compare the sum of the energies of $f_\eps$ and $g_\eps$ 
with the sum of the energies of $h_0$ and $h_1$.

Let $0\leq \eps\leq 1$. 
Here is the definition of the two maps $f_\eps:X\ra Y$ and $g_\eps:X\ra Y$. We fix a $\mc C^1$ cut-off function $\eta:X\mapsto[0,1];x\mapsto \eta_x$ 
with compact support $K$, and we let for all $x$ in $X$~:
\begin{equation} 
\label{eqnfepgep}
f_\eps(x):=h_{\eps\eta_x}(x)
\;\;{\rm and}\;\;
g_\eps(x):=h_{1-\eps\eta_x}(x)\, .
\end{equation}
These functions are Lipschitz continuous, so that they are almost everywhere differentiable.
In order to compute their differentials, we use the notations 
\eqref{eqnjvtdhv} and \eqref{eqntxtdht}~:
for all $x$ in a subset $X'\subset X$ of full measure, 
for all $V$ in  $T_xX$, for almost all $t$ in $[0,1]$, we let
\begin{eqnarray*}
\label{eqnjvttxt}
J_V(t):= D_xh_t(V) 
&{\rm and}&
\tau_x(t):= \partial_th_t(x)\, .
\end{eqnarray*}
For such a tangent vector $V$, it follows from Lemma \ref{lemgeoint}.$b$ that there exists a convex function 
$t\mapsto \ph_V(t)$ such that  $\ph_V(t)= \|J_V(t)\|$
for all $t$ where the derivative $J_V(t)$ exists.
By the chain rule, for almost all $\eps$ in $[0,1]$, the differentials of $f_\eps$ and $g_\eps$
are given, for almost all $x$ in $X$ and all $V$ in $T_xX$, by 
\begin{eqnarray}
\label{eqndxfgtv}
Df_\eps(V)&=& \;\;\;\; J_V(\eps\eta_x)\;\;+\;\; \eps\, V.\eta\, \tau_{x}(\eps\eta_x),\\
Dg_\eps(V)&=& J_V(1-\eps\eta_x)- \eps\, V.\eta\, \tau_{x}(1-\eps\eta_x)
\end{eqnarray}
where $V.\eta=d\eta(V)$ is the derivative of the function $\eta$ in the direction $V$.

According to Lemma \ref{lemgeoint}.$a$, for almost all $x$ in $X$ and all $V$ in $T_xX$, the scalar product
$\langle J_V(t),\tau_x(t)\rangle$ is almost surely constant.
Therefore, for almost all $\eps$ in $[0,1]$, $x$ in $X$ 
and $V$ in the unit tangent bundle $T^1_xX$, one has the equality
\begin{equation}
\label{eqndfvdgv}
\| Df_\eps(V)\|^2+\| Dg_\eps(V)\|^2
= \ph_{V}(\eps\eta_x)^2 +\ph_{V}(1-\eps\eta_x)^2 + 2 \eps^2(V.\eta)^2\, .
\end{equation} 
We introduce the convex function $t\mapsto \Ph^V_t:=\ph_V(t)^2$.
Using Inequality \eqref{eqnphtpht}, one gets
for almost all $\eps$ in $[0,1]$, $x$ in $X$ 
and $V$ in $T^1_xX$ the bound
\begin{equation*}
\label{eqndfvbis}
\| Df_\eps(V)\|^2+\| Dg_\eps(V)\|^2
\leq \Ph^V_0 +\Ph^V_1 -
2\eps\eta_x(\Ph^V_{0}+\Ph^V_{1}-2\Ph^V_{1/2})
+ 2 \eps^2(V.\eta)^2.
\end{equation*} 

We recall that the energy over $K$ of a Lipschitz map $h:X\ra Y$
is 
$$
E_K(h):=\int_K\|D_xh\|^2 \rmd x=\int_{T^1K} \|Dh(V)\|^2\rmd V\, ,
$$
where $\rmd x$ is the Riemannian measure on $X$ and 
$\rmd V$ the Riemannian measure on $T^1X$. Integrating the previous inequality on the unit tangent bundle of $K$,
one gets the following inequality relating the energy over $K$ 
of $f_\eps$, $g_\eps$, $h_0$ and $h_1$~:
\begin{equation}
\label{eqnkfeps}
E_K(f_\eps)+E_K(g_\eps)-E_K(h_0)-E_K(h_1)\leq
-\eps\int_{T^1_K}A(V)\rmd V  +O(\eps^2)\, 
\end{equation} 
where $A$ is the function on $T^1X$
defined for almost all $x$ in $X$ and $V$ in $T_x^1X$, by 
$$
A(V):=2\eta_x\, (\Ph^V_{0}+\Ph^V_{1}-2\Ph^V_{1/2}).
$$ 
Since  the harmonic maps $h_0$ and $h_1$ are critical points for the energy functional,
Inequality \eqref{eqnkfeps} implies that 
\begin{equation}
\label{eqnintkav}
\int_{T^1K}A (V)\rmd V \leq 0\, .
\end{equation} 
Since the function $\Ph^V$ is convex, the function $A$ is non-negative.
Therefore Inequality \eqref{eqnintkav} implies that 
the function $A$ is almost surely zero.
Since the function $\eta$ was arbitrary, this tells us that,
for almost all $V$ in $T^1X$, one has
\begin{equation*}
2\Ph^V_{1/2}=\Ph^V_{0}+\Ph^V_{1}.
\end{equation*}
Since $\Ph^V$ is the square of the convex function $\ph_V$,
this tells us that for almost all $V$ in $TX$,
the function $\ph_V$ is constant.
This proves \eqref{eqndhv0t1}.
\end{proof}

\subsection{Equidistant harmonic maps in negative curvature}
\label{secequneg}

The following Corollary \ref{cortwohar} 
improves  the conclusion of Lemma \ref{lemtwohar}  
when the curvature of $Y$ is uniformly negative.

\bc
\label{cortwohar} Let $a>0$.
Let $X$, $Y$  be two  $\mc C^{2}$ Hadamard manifolds
with $\mc C^1$ Riemannian metrics. 
Assume moreover that $Y$ is ${\rm CAT}(-a^2)$.
Let $h_0, h_1:X\ra Y$ be two harmonic maps 
such that the distance function $x\mapsto d(h_0(x),h_1(x))$
is constant.
Then either $h_0=h_1$ or 
\begin{equation}
\label{eqnh0h1ga}
\mbox{$h_0$ and $h_1$
take their values in the same  geodecic $\Ga$ of $Y$.}
\end{equation}
\ec

This means that, when $h_0\neq h_1$, there exists a geodesic $t\ra \ga(t)$ in $Y$ and two harmonic functions $u_0$, $u_1$ on $X$ such that 
$h_0=\ga\circ u_0$, $h_1=\ga\circ u_1$
and the difference $u_1\!-\! u_0$ is a bounded harmonic function 
on $X$.

Note that this case is ruled out when $h_0$ and $h_1$ are 
within bounded distance from a quasi-isometric
 map $f:X\ra Y$ since $X$ has dimension $k\geq 2$.

\begin{proof}[Proof of Corollary \ref{cortwohar}]
We can assume that the distance 
between $h_0$ and $h_1$ is equal to $1$.
We recall a few notations that we have already used.
For $t$ in $[0,1]$, let $h_t$ be the geodesic interpolation of $h_0$ and $h_1$.
For $x$ in $X$, let $\tau_x(t):=\partial_th_t(x)$.
Since the map $(t,x)\mapsto h_t(x)$ is Lipschitz continuous,
the vector $J_V(t):=Dh_t(V)$ is well-defined 
for almost all $t$ in $[0,1]$, $x$ in $X$ and $V$ in $T_xX$.
For all such $t$, $x$, $V$, we set
\begin{equation*}
\al_V(t):= \langle J_V(t),\tau_x(t)\rangle
\, ,\;\;
\ph_V(t):=\|J_V(t)\|
\, ,\;\;
\psi_V(t):=(\ph_V(t)^2-\al_V(t)^2)^{1/2}\, .
\end{equation*} 
By Lemmas \ref{lemgeoint}.a and \ref{lemtwohar},
one has the equalities
\begin{equation}
\label{eqnalv0t1}
\al_V(0)=\al_V(t) =\al_V(1)
\;\;\;{\rm and}\;\;\;
\ph_V(0)=\ph_V(t)=\ph_V(1)\, 
\end{equation} 
for almost all $t$ in $[0,1]$ and almost all $V$ in $TX$, so that
\begin{equation*}
\psi_V(0)=\psi_V(t)=\psi_V(1)\, .
\end{equation*} 
Comparing these equalities with the uniform convexity 
of the function $\ps_V$ in
\eqref{eqnpsivtl},
one infers that
$\psi_V(t)=0$.
Hence, when $J_V(t)$ is defined, one has
\begin{equation}
\label{eqnjvtavt}
J_V(t)=\al_V(0)\,\tau_x(t)\, .
\end{equation} 

We now explain why \eqref{eqnjvtavt} implies \eqref{eqnh0h1ga}.
It is enough to check that, for every $\mc C^1$ 
curve 
$$
c:[0,1]\ra X;s\mapsto c_s
$$ 
with speed at most $1/3$, 
the images
\begin{equation}
\label{eqnh0c1ga}
\mbox{\rm $h_0( c_{[0,1]})$ 
and $h_1( c_{[0,1]})$ are both included in the geodesic $\Ga$}
\end{equation}
of $Y$
containing both $h_0(c_0)$ and $h_1(c_0)$.

The idea is to construct an auxiliary curve $C$ with zero derivative.
Let $\beta:[0,1]\ra  [-1/3,1/3]$ be the function 
given by $s\mapsto \beta_s:=\int_0^s\al_{c'_r}(0)\rmd r$.
For $t_0$ in $[1/3,2/3]$, let $C$ be the curve
$$
C:[0,1]\ra  Y; s\mapsto C(s):=h_{t_0-\beta_s}(c_s).
$$
Since the speed of $c$ is bounded by $1/3$, the curve $C$ is 
well-defined. 
By construction $C$ is a Lipschitz continuous path,
and by \eqref{eqnalv0t1} and \eqref{eqnjvtavt},  
for almost all $s$, its derivative is,
$$
C'(s)= \left(\al_{c'_s}(t_0\! -\! \beta_s)-\al_{c'_s}(0)\right)
\tau_{c_s}(t_0\! -\! \beta_s) =0
$$
Therefore, 
one has $C(s)=C(0)$ for all $s$ in $[0,1]$, that is
$$
h_{t_0-\beta_s}(c_s)=h_{t_0}(c_0)\, .
$$
Using this equality for two distinct values of  $t_0$,
we deduce that the geodesic segments $h_{[0,1]}(c_0)$
and $h_{[0,1]}(c_s)$ meet in at least two points.
This proves \eqref{eqnh0c1ga}  
and ends the proof of Corollary \ref{cortwohar}.
\end{proof}

This also ends the proof of Proposition \ref{prouni}.

\section{Boundary maps for weakly coarse embeddings}
\label{secbouweacoa}
\bq
This chapter is independent of the previous ones.
We prove that a weakly coarse embedding between pinched 
Hada\-mard manifolds admits a boundary map 
which is well-defined outside a set of zero Hausdorff dimension. 
We prove that the fibers of this boundary map 
also have zero Hausdorff dimension (Theorem \ref{thmboumap}). 
More precisely, we will prove  quantitative versions of these facts
(Propositions \ref{propca} and \ref{propcb}) that we will use in Chapter \ref{secweacoahar}.
\eq

\subsection{Weakly coarse embeddings}
\label{secweacoaemb}
\bq
In this section, we introduce various classes of 
rough Lipschitz maps $f:X\ra Y$ between pinched Hadamard manifolds
generalizing the quasi-isometric maps.
\eq

Let $X$ and $Y$ be Hadamard manifolds with pinched sectional curvatures
$
-b^2\leq K_X\leq -a^2<0
$,
$
-b^2\leq K_Y\leq -a^2<0
$.
Let $k=\dim X$, $k'=\dim Y$.

\bd 
Let $c>0$. A map $f:X\to Y$ is  rough $c$-Lipschitz if, for all $x,x'\in X$,
with $d(x,x')\leq 1$ 
one has 
$d(f(x),f(x'))\leq c \, .$
\ed 

When $f:X\to Y$ 
is a rough $c$-Lipschitz map,
one has  
for all $x$, $x'$ in $X$~: 
$$
d(f(x),f(x'))\leq c\, d(x,x') + c\, .
$$ 

\bd
\label{defcoaemb}
A map $f:X\to Y$  is a {\it coarse embedding} 
if there exist two non-decreasing unbounded functions 
$\ph_1$, $\ph_2$ such that, for all $x,x'\in X$~:
\begin{equation}
\label{eqncoaemb}
\ph_1(d(x,x'))\leq d(f(x),f(x'))\leq \ph_2(d(x,x'))\, .
\end{equation}
\ed

Note that a map which is within bounded distance 
from a coarse embedding is also a coarse embedding.
In Definition \ref{defcoaemb}
one may always assume that the function $\ph_2$ is an affine function, that is, $f$ is rough Lipschitz.
A quasi-isometric map is a special case of a coarse embedding, 
where $\ph_1$ is also an affine function.

\bd 
\label{defweacoa} 
A weakly coarse embedding $f:X\to Y$ is a rough Lipschitz map 
for which there exist $c_0,C_0>0$ such that, for all $x$, $x'$ in $X$~: 
\begin{eqnarray}
\label{eqnweacoa}
d(f(x),f(x'))\leq c_0 &\Rightarrow & d(x,x')\leq C_0 \, .
\end{eqnarray}
\ed

Equivalently, this means that 
there exist two non-decreasing non-nega\-tive and non-zero functions 
$\ph_1$, $\ph_2$ such that
\eqref{eqncoaemb} holds. 
Of course, any coarse embedding $f:X\to Y$ is a weakly coarse embedding.

\bex 
\label{exacoaemb}
There exist many coarse and weakly coarse embeddings 
$f$ from $\m H^2$ to $\m H^3$. More precisely, 
for any non-decreasing $1$-Lipschitz function
$\ph_1:[0,\infty[\ra[0,\infty[$ with $\ph_1(0)=0$
one can choose a $1$-Lipschitz map $f$ for which $\ph_1$ is the best lower bound 
in \eqref{eqncoaemb}.
\eex

\begin{proof}
Indeed, one first constructs a unit-speed $\mc C^1$ curve $f_0:\m R\to\m H^2$ 
such that $\ph_1(t)=\min\limits_{s\in \m R}d(f_0(s+t),f_0(s))$
for every $t\geq 0$.
We set $\m H^1:= \m R$ and, for $k\geq 1$, we embed each  space $\m H^{k}$ as a totally geodesic 
hyperplane in $\m H^{k+1}$
and denote by 
$x\to n_x$ a unit normal vector field to $\m H^{k}$ in $\m H^{k+1}$.
We now define the Lipschitz map $f:\m H^2\to\m H^3$
as
$f(\exp(tn_s)):=\exp(tn_{f_0(s)})$ 
for all $s$ in $\m H^1$ and $t\in \m R$.
\end{proof}

For any point $x_0\in X$ and $r>0$,  
we identify through the exponential map each sphere $S(x_0,r)$ 
with the unit tangent sphere 
$$
S_{x_0}:=\{\xi\in T_{x_0}X\mid \|\xi\|=1\}.
$$
More precisely, when $\xi\in S_{x_0}$ is a unit tangent vector 
at the point $x_0$, we  denote by 
$r\mapsto \xi_r:={\rm exp}_{x_0}(r\xi)$ 
the corresponding unit-speed geodesic ray (so that $\xi_0=x_0$).

We denote by $\ol{X}=X\cup \partial X$ the visual compactification of $X$.
The boundary $\partial X$ is the set of 
(equivalence classes of) rays in $X$. 
The map $\psi_{x_0}:\xi\mapsto \lim\limits_{r\ra\infty}\xi_r$ 
gives an homeomorphism 
of the unit tangent sphere $S_{x_0}$
with the sphere at infinity $\partial X$.
We say that a subset $A$ of $\partial X$ has 
{\it zero Hausdorff dimension} if, seen in $S_{x_0}$, 
it has zero Hausdorff dimension. 
One can check that this property 
does not depend on the choice of $x_0$,
because for any another point $x_1\in X$, the homeomorphism 
${\psi_{x_1}\!\!}^{ -1}\circ \psi_{x_0}$ is bi-H\"{o}lder.

In this Chapter \ref{secbouweacoa}
we will prove the following theorem.

\bt
\label{thmboumap}
Let $f:X\ra Y$ be a weakly coarse embedding between pinched Hadamard manifolds.\\
$a)$ There exists a subset $A\subset \partial X$ of zero Hausdorff dimension 
such that,
for all $\xi\in \partial X\!\smallsetminus\! A$, the limit 
$\partial f(\xi):=\lim\limits_{r\ra\infty}f(\xi_r)$ exists
in $\partial Y$.\\
$b)$ For every $\xi\in \partial X\!\smallsetminus\! A$, the fiber
$\{\eta\in \partial X\!\smallsetminus\! A\mid 
\partial f(\eta) = \partial f(\xi)\}$
has zero Hausdorff dimension.
\et
The map $\partial f:\partial X\!\smallsetminus\! A\ra \partial Y$
is called the boundary map of $f$.

The proof of Theorem \ref{thmboumap} will last up to the end of this Chapter.
The quantitative estimates 
\eqref{eqnhuainfaxy} and \eqref{eqnhubinfaxy} that we will obtain during this proof  
will be reused in Chapter \ref{secweacoahar}.

\subsection{Hausdorff dimension and Frostman measures}
\label{sechaudim}
\begin{quote}
In this section we introduce classical notations and 
definitions from geometric measure theory.
\end{quote}

\bd
\label{deffromea}
Let $M ,\nu >0$. A Borel probability measure $\si$ on a 
compact metric space $S$ is said to be $(M,\nu)$-Frostman
if, for all $\xi\in S$ and all $r>0$, one has the following bound for the measures of the balls~:
\begin{equation}\label{eqnmeafro}
\sigma(B(\xi,r))\leq M\, r^\nu\, .
\end{equation}
\ed
Let $\nu>0$ and $\de>0$.
For a subset $A\subset S$, 
we denote
\begin{equation*}
\label{eqndelhau}
H^\nu_\de(A)=\inf
\{\textstyle\sum\limits_{i\geq 1}{\rm diam}(U_i)^\nu\mid
A\subset \bigcup_i U_i\; ,\;\;
{\rm diam}(U_i)\leq \de
\}\,.
\end{equation*}
When $\de=\infty$, we denote similarly
\begin{equation}
\label{eqndelhauinf}
H^\nu_\infty(A)=\inf
\{\textstyle\sum\limits_{i\geq 1}{\rm diam}(U_i)^\nu\mid
A\subset \bigcup_i U_i \}\,.
\end{equation}

We recall that
the $\nu$-dimensional Hausdorff measure of $A$ is defined as 
\begin{equation*}
\label{eqnhaumea}
H^\nu(A)=\sup\limits_{\de>0}H^\nu_\de(A)\,
\end{equation*}
and the Hausdorff dimension of $A$ is
\begin{equation*}
\label{eqnhaudim}
\dim_H(A)=\inf\{\nu>0\mid H^\nu(A)=0\}\,.
\end{equation*}
Observe that one also has
\begin{equation}
\label{eqnhaudiminf}
\dim_H(A)=\inf\{\nu>0\mid H^\nu_\infty(A)=0\}\,.
\end{equation}

The following easy lemma relates $H^\nu_\infty(A)$
with Frostman measures.
 
\bl
\label{lemfro}
Let $\si$ be a $(M,\nu)$-Frostman measure on a compact metric space $S$ and $A\subset S$. Then one has
$\si(A)\leq M\, H^\nu_\infty(A)$.
\el

\begin{proof}
Observe that 
$\si(A)\leq 
\sum\limits_{i\geq 1}\si(U_i)\leq 
M\,\sum\limits_{i\geq 1}{\rm diam}(U_i)^\nu$
for any covering $(U_i)$ of $A$.
\end{proof}

\subsection{Image of a large sphere}
\begin{quote}
In this section we focus on those points of a sphere $S(x_0,r)$ whose images under a weakly coarse embedding are too close from a given point.
\end{quote}

The following definition plays a key role in the proof of Theorem \ref{thmboumap}.

\bd
\label{defpcc}
Let $c$, $C_1$, $C_2>0$.
A rough $c$-Lipschitz map $f:X\ra Y$ 
satisfies property $\mc C_{C_1,C_2}$ 
if,  for all  $\alpha>0$, $x_0\in X$, $y_0\in Y$ and $r>0$,
the set
\begin{equation}
\label{eqnp0}
A_{x_0,y_0,\al,r}:=\{\xi\in S_{x_0}\, |\, d(y_0,f(\xi_r))\leq \alpha r\}
\end{equation}
can be covered by at most $C_1e^{bk'\al r}$ 
balls of radius $C_2e^{-ar}$, where $k'=\dim Y$.

If such constants $C_1$, $C_2$ exist, we say that $f$ 
satisfies property $\mc C$.
\ed

In this definition the unit sphere $S_{x_0}$ is endowed with the distance induced 
by the Riemannian norm on $T_{x_0}X$.

The bound on the size of a covering of the set \eqref{eqnp0} will be very useful for Hausdorff dimension estimations.
The precise value $bk'\al$ for the exponential growth in Definition \ref{defpcc} is not very
important. It is the one obtained in the next proposition and it just avoids the introduction 
of another parameter.

\bp
\label{procoapcc} 
Every weakly coarse embedding 
$f:X\to Y$  satisfies property $\mc C$.
\ep

In particular, Propositions \ref{propca} and \ref{propcb} below apply  to all weakly coarse embeddings $f$.

We will use the Bishop volume estimates (see for example \cite{GHL04})
which compare the volume of balls in $X$ and in the hyperbolic space $\m H^k$.

\bl
\label{lemvolest}
Let $X$ be a pinched Hadamard manifold with dimension $k$ and sectional curvature $-b^2\leq K_X\leq -a^2<0$. Then, 
for $R>0$, one has
$$
a^{-k}{\rm vol} (B_{\m H^k}(O,aR))\leq {\rm vol} (B_X(x,R))\leq 
b^{-k}{\rm vol} (B_{\m H^k}(O,bR))\, .
$$
\el
\noindent We will also need to bound  angles 
by  Gromov products
as in Lemma \ref{lemcomparison}.

\bl  
\label{lemang}
Let $Y$ be a Hadamard manifold with curvature $K_Y\!\leq\! -a^2\!<\!0$. Then, for all $y_0\in Y$ and $y_1,y_2\in Y\!\smallsetminus\!\{y_0\}$ 
one has the bound
$$
\theta_{y_0}(y_1,y_2)\leq 4\, e^{-a (y_1,y_2)_{y_0}} ,
$$
where $\theta_{y_0}(y_1,y_2)$ is the angle at $y_0$ of the geodesic triangle
$(y_0, y_1, y_2)$ and
$(y_1,y_2)_{y_0}:=\frac 12 (d(y_0,y_1)+d(y_0,y_2)-d(y_1,y_2))\, $ 
is the Gromov product.
\el

\begin{proof}[Proof of Proposition \ref{procoapcc}] We will see that $f$ 
satisfies property $\mc C_{C_1,C_2}$
where the constants $C_1$, $C_2$ depend
only on $a$, $b$, $k'$, and on $c_0$, $C_0$
from \eqref{eqnweacoa}.

It follows from the volume estimates of Lemma \ref{lemvolest} that there exists a constant $C_1>0$ such that for each ball $B(y_0,\alpha r)\subset Y$ ($r>0$) and each covering of minimal cardinality of this ball by balls with radii $c_0/2$
$$
B(y_0,\alpha r) \subset \cup _{i\in I}B(y_i,c_0/2),
$$
this cardinality is at most $C_1 e^{bk'\alpha r}$.
 
Since $f$ is a $(c_0,C_0)$-weakly coarse embedding, for each $i \in I$, the inverse image of this ball $f^{-1}(B(y_i,c_0/2))$  is either empty or lies in a ball $B(x_i,C_0)\subset X$.
By Lemma \ref{lemang}, the intersection $B(x,C_0)\cap S(x_0,r)$  
lies in a cone with vertex $x_0$ and angle $\theta_r= C_2 e^{-ar}$.
\end{proof}

\br
Any map $\widetilde{f}:X\to Y$ within  bounded distance from
a map $f:X\to Y$ satisfying Property $\mc C$ 
also satisfies Property $\mc C$.
\er

\subsection{Construction of the boundary map}
\label{secconbou}
\begin{quote}
We now investigate the long-term behavior 
of the images of geo\-desic rays 
under a rough Lipschitz map satisfying Property $\mc C$. 
\end{quote}

Let $X$, $Y$ be pinched Hadamard manifolds
and $f:X\to Y$ be a rough Lipschitz map satisfying 
Property $\mc C$. 
The following proposition \ref{propca} tells us that,
except for a set of rays of zero Hausdorff dimension, 
the image under  $f$ of a ray goes to infinity in $Y$ 
at positive speed
and this image converges to a point
in  $\partial Y$.

We need some notations.
For $x_0\in X$, let $A_{x_0}$
be the set of rays whose image do not go to infinity at positive speed, namely
\begin{equation*}
\label{eqnaxo}
A_{x_0}:=\{ \xi\in S_{x_0}\mid 
\liminf_{n\ra\infty}\tfrac{1}{n}\,
d(f(x_0),f(\xi_n))=0\}\, .
\end{equation*}
One has $A_{x_0}=\bigcap_{\al>0}A_{x_0,\al}$, where, for $\al>0$, we denote
\begin{equation*}
\label{eqnaxa}
A_{x_0,\al}:=\{ \xi\in S_{x_0}\mid \liminf_{n\ra\infty}\tfrac{1}{n}
d(f(x_0),f(\xi_n))< \al\}\, .
\end{equation*}
One has 
$A_{x_0,\al}\subset\bigcap_{n_0\geq 1}A_{x_0,\al}(n_0)$, where, for $n_0\geq 1$, one defines
\begin{equation*}
\label{eqnaxf}
A_{x_0,\al}(n_0):=\{ \xi\in S_{x_0}\mid 
d(f(x_0),f(\xi_n))\leq n\al\; \;
\mbox{\rm for some $n\geq n_0$}
\}\, .
\end{equation*}

\bp
\label{propca}
Let $X$, $Y$ be pinched Hadamard manifolds 
with sectional curvatures $-b^2\leq K\leq -a^2<0$. 
Let  $c, C_1,C_2>0$
and $f:X\to Y$ be a rough $c$-Lipschitz map satisfying 
Property $\mc C_{C_1,C_2}$. 
Let $\al>0$, $k'=\dim Y$ and $\nu_\al:=\tfrac{bk'\al}{a}$. For  $\nu>\nu_\al$, we 
introduce the constant $C_{3,\al,\nu}:=\frac{C_1 C_2^\nu}{1-e^{-a(\nu-\nu_\al)}}$.\\
$a)$ 
For every $x_0\in X$ and $n_0\geq 1$, one has
\begin{equation}
\label{eqnhuainfaxy} 
H^\nu_\infty(A_{x_0,\al}(n_0))\leq 
C_{3,\al,\nu} \, e^{-a(\nu-\nu_\al)n_0}.
\end{equation}
$b)$ 
For every $(M,\nu)$--Frostman measure $\si$ on $S_{x_0}$, one has
\begin{equation}
\label{eqnsiaxy} 
\si(A_{x_0,\al}(n_0))\leq 
M \, C_{3,\al,\nu} \, e^{-a(\nu-\nu_\al)n_0}.
\end{equation}
$c)$ One has
$\dim_H(A_{x_0,\al})\leq \nu_\al$.\\
$d)$ One has
$\dim_H(A_{x_0})=0$.\\
$e)$ For every $\xi\in S_{x_0}\!\smallsetminus\! A_{x_0}$, 
the limit $\partial f(\xi):=\lim\limits_{r\ra\infty}f(\xi_r)$
exists in $\partial Y$.
\ep

The bound \eqref{eqnsiaxy} can be interpreted as a large deviation inequality for the random path $f(\xi_t)$ when the ray
$\xi$ is chosen randomly with law $\si$. 
We will apply it later on to various harmonic measures $\si=\si_{x_0,r}$.
A key point in this bound is 
that the constants  involved in  \eqref{eqnsiaxy}
do not depend on the $(M,\nu)$-Frostman measure $\si$.

\begin{proof}[Proof of Proposition \ref{propca}] 
$a)$ Since $f$ has Property $\mc C_{C_1,C_2}$, one has 
\begin{eqnarray*}
H^\nu_\infty(A_{x_0,\al}(n_0))
&\leq &\textstyle
\sum\limits_{n\geq n_0}H^\nu_\infty(A_{x_0,f(x_0),\al,n})\\
&\leq &\textstyle
\sum\limits_{n\geq n_0}
C_1e^{a\nu_\al n}\, C_2^\nu e^{-a\nu n}
\; =\; 
C_{3,\al,\nu} \, e^{-a(\nu-\nu_\al)n_0}\, .
\end{eqnarray*}

$b)$ This follows from $a)$ and Lemma \ref{lemfro}.

$c)$ Letting $n_0$ go to infinity in \eqref{eqnhuainfaxy},
one gets
$H^\nu_\infty(A_{x_0,\al})=0$
for all $\nu>\nu_\al$. Therefore, 
 \eqref{eqnhaudiminf} yields that
$\dim_H(A_{x_0,\al})\leq \nu_\al$.

$d)$ One has $\dim_H(A_{x_0})\leq \inf\limits_{\al>0}\dim_H(A_{x_0,\al}) =0$.

$e)$ Since $f$ is rough Lipschitz, one can assume that 
the parameters $r$ are integers and applies 
the next lemma \ref{lemlimang} to
the sequence $y_n=f(\xi_n)$.
\end{proof}

\bl
\label{lemlimang} Let $Y$ be a Hadamard manifold
with sectional curvature $K_Y\leq -a^2<0$. 
Let $(y_n)_{n\in\m N}$ be a sequence in $Y$ 
such that 
$$
\sup\limits_{n\geq 0}d(y_n,y_{n+1})<\infty
\;\;{\rm and}\;\;
\liminf\limits_{n\ra\infty}\frac1n d(y_0,y_n)>0.
$$

Then,  the limit $y_\infty:=\lim\limits_{n\ra\infty}y_n$ exists in the visual boundary $\partial Y$.
\el

\begin{proof} Choose  $c>0$, $\al>0$ and $n_0\geq 1$ such that 
$$
d(y_n,y_{n+1})\leq c
\;\;\;{\rm and}\;\;\;
d(y_0,y_n)\geq  n\alpha\;\;\; 
\mbox{\rm for all}\;
n\geq n_0\, .
$$
By Lemma \ref{lemang}, the inequality 
$\theta_{y_0}(y_n,y_{n+1})\leq 4e^{ac/2} e^{-a\alpha n}$ holds for any $n\geq n_0$. Since this series converges,
there exists a geodesic ray $\gamma_+\subset Y$  with origin $y_0$ such that $\lim\limits_{n\ra\infty}\theta_{y_0}(y_n,\gamma_+)= 0$. 
\end{proof}

Unlike quasi-isometric maps, coarse embedding may not have boundary values in every direction. 
See Example \ref{exacoaemb} where we could begin with a curve $f_0$ that spirals away in $\mathbb H^2$.

\subsection{The fibers of the boundary maps}
\begin{quote}
We now investigate the fibers of the boundary map $\partial f$ 
of a rough Lipschitz map satisfying property $\mc C$.
\end{quote}

The following proposition \ref{propcb} tells us that
the fibers of the boundary map have zero Hausdorff dimension.

We keep the notations of Section
\ref{secconbou} and introduce  more notations.
Let $X$, $Y$ be pinched Hadamard manifolds
and $f:X\to Y$ be a rough $c$-Lipschitz map satisfying 
Property $\mc C$. 
Let $x_0\in X$ and $\xi\in S_{x_0}$.
Let $B_{x_0}^\xi$ be the set of rays $\eta$ that ``do not go
away from $\xi$ at positive speed'', namely

\begin{equation*}
\label{eqnbxo}
B_{x_0}^\xi:=\{ \eta\in S_{x_0}\mid 
\lim_{n_0\ra\infty}\,\inf\limits_{n,p\geq n_0}
\tfrac{1}{n+p}\,
d(f(\xi_n),f(\eta_p))=0\}\, .
\end{equation*}
One has $B^\xi_{x_0}=\bigcap_{\al>0}B^\xi_{x_0,\al}$, where, for $\al>0$, we set $\be_\al:=\frac{\al^2}{2\al+c}$ and
\begin{equation*}
\label{eqnbxa}
B^\xi_{x_0,\al}:=\{ \eta\in S_{x_0}\mid \lim_{n_0\ra\infty}\,\inf\limits_{n,p\geq n_0}
\tfrac{1}{n+p}\,
d(f(\xi_n),f(\eta_p))< \be_\al\}\, .
\end{equation*}
The choice of this value $\be_\al$ will be explained in Lemma \ref{lempcb}. 

One has 
$B^\xi_{x_0,\al}\subset\bigcap_{n_0\geq 1}B^\xi_{x_0,\al}(n_0)$, where we set for any $n_0\geq 1$~:
\begin{equation*}
B^\xi_{x_0,\al}(n_0):=\{ \eta\in S_{x_0}\mid 
d(f(\xi_n),f(\eta_p))\leq (n\!+\! p)\be_\al\; \;
\mbox{\rm for some $n,p\geq n_0$}
\}\, .
\end{equation*}

\bp
\label{propcb}
Let $X$, $Y$ be pinched Hadamard manifolds 
with sectional curvatures $-b^2\leq K\leq -a^2<0$. 
Let  $c, C_1,C_2>0$ and $f:X\to Y$ be a rough $c$-Lipschitz map with 
Property $\mc C_{C_1,C_2}$. 
Let $\al>0$, $k'=\dim Y$,  $\nu_\al:=\tfrac{bk'\al}{a}$
and $\be_\al:=\frac{\al^2}{2\al+c}$. For  $\nu>\nu_\al$, we 
set
$
C_{4,\al,\nu}:=
\frac{C_1 C_2^\nu}
{(1-e^{-\be_\al bk'})(1-e^{-a(\nu-\nu_\al)})}$.\\
$a)$ 
For  $x_0\in X$, $n_0\geq 1$ and 
$\xi\in S_{x_0}\!\smallsetminus\! A_{x_0,\al}(n_0)$, one has
\begin{equation}
\label{eqnhubinfaxy} 
H^\nu_\infty(B^\xi_{x_0,\al}(n_0))\leq 
C_{4,\al,\nu} \, e^{-a(\nu-\nu_\al)n_0}.
\end{equation}
$b)$ 
For $\xi\in S_{x_0}\!\smallsetminus\! A_{x_0,\al}(n_0)$ and any $(M,\nu)$--Frostman measure $\si$ on $S_{x_0}$, one has
\begin{equation}
\label{eqnsibxy} 
\si(B^\xi_{x_0,\al}(n_0))\leq 
M \, C_{4,\al,\nu} \, e^{-a(\nu-\nu_\al)n_0}.
\end{equation}
$c)$ For
$\xi\in S_{x_0}\!\smallsetminus\! A_{x_0,\al}$, one has
$\dim_H(B^\xi_{x_0,\al})\leq \nu_\al$.\\
$d)$ For 
$\xi\in S_{x_0}\!\smallsetminus\! A_{x_0}$, one has
$\dim_H(B^\xi_{x_0})=0$.\\
$e)$ Assume $n_0\geq \tfrac{4e^{2ac}}{1-e^{-a\be_\al }}$. For  $\xi,\eta \in S_{x_0}\!\smallsetminus\! A_{x_0,\al}(n_0)$
with $\eta\not\in  B^\xi_{x_0,\al}(n_0)$ and
for all $n,p\geq \ell_0:=\frac{4 n_0 c}{\al}$, one has the lower bound for the angle
\begin{equation}
\label{eqnanglowfff}
\theta_{f(x_0)}(f(\xi_n),f(\eta_p))\geq \tfrac12 e^{-2n_0bc}\, .
\end{equation}
$f)$ For  $\xi,\eta \in S_{x_0}\!\smallsetminus\! A_{x_0}$,
with $\eta\not\in  B^\xi_{x_0}$, one has $\partial f(\eta)\neq \partial f(\xi)$.
\ep

We begin with a technical covering lemma.

\bl
\label{lempcb} Notations as in Proposition \ref{propcb}.
For 
$\xi\in S_{x_0}$ and $p\geq n_0$,~let 
\begin{equation*}
\label{eqnbxp}
B^\xi_{x_0,\al,p}:=\{\eta\in S_{x_0}\, \mid\, d(f(\xi_n),f(\eta_p))\leq (n\! +\! p)\be_\al\; \;
\mbox{\rm for some $n\geq n_0$}\}\, .
\end{equation*}
If $\xi$ is not in $A_{x_0,\al}(n_0)$, the set
$B^\xi_{x_0,\al,p}$ can be covered by at most 
$\displaystyle\frac{C_1\, e^{\al bk'p}}{1-e^{-\be_{\al} bk'}}$ 
balls of radius $C_2\, e^{-ap}$.
\el

The value $\be_\al$ has been chosen in order to obtain 
the same exponential growth $\al b k'$ for these coverings
as in Definition \ref{defpcc}.

\begin{proof}[Proof of Lemma \ref{lempcb}]
Using the notation \eqref{eqnp0}, we have the equality
$$
\textstyle 
B^\xi_{x_0,\al,p}=\bigcup\limits_{n\geq n_0} A_{x_0,f(\xi_n),\be_\al,n+p}.
$$ 
The key point is that, 
since $f$ is rough $c$-Lipschitz and $\xi\not\in A_{x_0,\al}(n_0)$, this union is a finite union. Indeed, assume that an integer $n\geq n_0$ satisfies
$$
d(f(\xi_n),f(\eta_p))\leq (n\! +\! p)\be_\al
$$
for some $\eta\in S_{x_0}$. Since $\; d(f(x_0),f(\xi_n))\geq n\al\; $ and 
$\; d(f(x_0),f(\eta_p))\leq p\, c\, ,$ 
one must have
$$
n\al - p\, c\leq (n\! +\! p)\be_\al.
$$ 
By our choice of $\be_\al$, this inequality is equivalent to
$$
(n\! +\! p)\be_\al\leq p\,\al .
$$
Therefore, using Definition \ref{defpcc}, one can cover the set 
$B^\xi_{x_0,\al,p}$ by at most 
$C_1\sum_n e^{(n+p)bk'\be_\al}$ balls of radius
$C_2e^{-ap}$, where this sum runs over the integers $n\geq n_0$ such that $(n\! +\! p)\be_\al\leq p\al .$
Computing this sum, one deduces that the set
$B^\xi_{x_0,\al,p}$ can be covered by at most 
$\displaystyle\frac{C_1\, e^{\al pbk'}}{1-e^{-\be_{\al} bk'}}$ 
balls of radius $C_2e^{-ap}$.
\end{proof}

\begin{proof}[Proof of Proposition \ref{propcb}] 
$a)$ Using Lemma \ref{lempcb}, one has
\begin{eqnarray*}
H^\nu_\infty(B^\xi_{x_0,\al}(n_0))
&\leq &\textstyle
\sum\limits_{p\geq n_0}H^\nu_\infty(B^\xi_{x_0,\al,p})\\
&\leq &{\textstyle
\sum\limits_{p\geq n_0}}
\frac{C_1e^{a\nu_\al p}}{1-e^{-\be_\al bk'}}\, C_2^\nu e^{-a\nu p}
\; =\; 
C_{4,\al,\nu} \, e^{-a(\nu-\nu_\al)n_0}\, .
\end{eqnarray*}

$b)$ This follows from $a)$ and Lemma \ref{lemfro}.

$c)$ Letting $n_0$ go to infinity in \eqref{eqnhubinfaxy}
one gets
$H^\nu_\infty(B^\xi_{x_0,\al})=0$,
for all $\nu>\nu_\al$. Therefore, 
using \eqref{eqnhaudiminf}, it follows that
$\dim_H(B^\xi_{x_0,\al})\leq \nu_\al$.

$d)$ One has $\dim_H(B^\xi_{x_0})\leq \inf\limits_{\al>0}\dim_H(B^\xi_{x_0,\al}) =0$.

$e)$ This is a consequence of
the following lemma \ref{lemangwea} applied to
the sequences $y_n=f(\xi_n)$ and $z_p=f(\eta_p)$.

$f)$ This follows from $e)$.
\end{proof}

\subsection{Two sequences going away from one another}
\bq
The aim of this section is to prove the following lemma
which provides, in a pinched Hadamard manifold, 
a lower bound for the angle 
between points in two sequences 
with bounded speed that ``go away from one another 
at positive speed''. 
\eq

\bl
\label{lemangwea}
Let $Y$ be a Hadamard manifold with sectional curvature $-b^2\leq K_Y\leq -a^2<0$. 
Let $c\geq \al\geq \be>0$  and  
$n_0\geq \tfrac{4e^{2ac}}{1-e^{-a\be }}$. 
Let $(y_n)_{n\in\m N}$ and $(z_p)_{p\in\m N}$ be two sequences of points in $Y$ with $y_0=z_0$, 
such that
\begin{eqnarray}
d(y_n,y_{n+1})\leq c
\;\;{\rm and}\;\; d(z_p, z_{p+1})\leq c
\;\;\mbox{\rm  for any integers}\;\; n,p\geq 0, &&\label{eqnlip}\\
d(y_0,y_n)\!\geq\!  n\alpha ,\;
d(y_0,z_p)\!\geq\! p\alpha 
\;{\rm and}\; 
d(y_n,z_p)\!\geq\!  (n\! +\! p)\beta
\; {\rm for}\;
n,p\geq n_0.&&
\label{eqnlin}
\end{eqnarray} 
Then,  for any integer $n,p\geq \ell_0:=\frac{4 n_0 c}{\alpha}$, one has
\begin{equation}
\label{eqnanglow}
\theta_{y_0}(y_n,z_p)\geq \tfrac12 e^{-2n_0bc}\, .
\end{equation}
\el

We will need two geometric lemmas.

We know that the orthogonal projection from a Hadamard manifold on a geodesic is a $1$-Lipschitz map. The following lemma gives us a more precise information when the curvature is bounded from above.

\bl 
\label{lemjacfie} 
Let $Y$ be a Hadamard manifold 
with sectional curvature $K_Y\leq -a^2 < 0$. 
Let $\gamma\subset Y$ be a geodesic. 
Then, the orthogonal projection $\pi : Y\to\gamma $ 
is smooth and, for  $y\in Y$, 
the norm of its differential satisfies
$$
\| D_y\pi\|\leq  \frac{1}{\cosh (a\, d(y,\gamma))}
\leq 2\,e^{-a\, d(y,\gamma)}\, .
$$
\el

\begin{proof}[Proof of Lemma \ref{lemjacfie}] 
The proof relies on a Jacobi field estimate (see \cite{GHL04}). 

Let $y\in Y\setminus\gamma$, let $\bar y=\pi (y)\in\gamma$ and  $\ell =d(y,\gamma )=d(y,\bar y$. Denote by $c:s\in[0,\ell ]\to c(s)\in Y$ the unit-speed parametrization of the geodesic segment $[\bar y,y]$ with $c(0)=\bar y$ and $c(\ell)=y$.

Let $v\in T_y Y$. We want to bound the ratio $\|D_y\pi (v)\| /\| v\|$. We may thus assume that $v$ is orthogonal to $ {\rm Ker}D_y\pi$, i.e. that $v$ is orthogonal to the geodesic $c$ at the point $y$. 

Choose a smooth curve $t\to y(t)\in Y$ with $y(0)=y$ and $y'(0)=v$, and let $\bar y(t)=\pi (y(t))\in\gamma$. 
We can assume that, for all $t$, one has
$d(y(t),\bar y(t))=\ell$. For each parameter $t$, introduce the constant-speed geodesic $c_t: [0,\ell] \to Y$ such that $c_t(0)=\bar y(t)$ and $c_t(\ell)=y(t)$. By construction, each vector $u(t):={\frac d{ds}c_t(s)}_{|s=0}\in T_{\bar y(t)}Y$ is normal to $\gamma$ at the point $\bar y(t)$.

The map $(s,t)\to c_t(s)$ is a variation of geodesics, so that $J:s\in [0,\ell]\to {\frac d{dt}c_t(s)}_{|t=0}\in T_{c(s)}Y$ is a Jacobi field along the geodesic $c$. We have $J(0)=D_y\pi (v)$ and $J(\ell)=v$. Since both $J(0)$ and  $J(\ell )$ are normal to $c$,  it follows that $J$ is a normal Jacobi field.  Since $\gamma$ is a geodesic and each $u(t)$ is normal to $\gamma$, we infer from
the equality $J'(0)=u'(0)$
that $J'(0)$ is normal to $\gamma$, i.e. orthogonal to $J(0)$. 
The Jacobi field equation $J''+R(c',J)c'=0$ and the hypothesis on the curvature now yield
$$
(\| J\|^2)''=2 \|J'\|^2 -2 R(c',J,c',J)\geq  2 (\|J\|')^2+ 2a^2 \| J\|^2
$$
and therefore
$$
\| J\|''\geq  a^2 \| J\|\, . 
$$
Since $\| J\|'(0)=\frac{\langle J(0),J'(0)\rangle}{\|J(0)\|} =0$, one deduces that
$\| J(t)\|\geq \|J(0)\|\cosh ( at)$,
for all  $t\geq 0$. In particular, one has 
$\|D_y\pi (v)\|\leq \| v\|/\cosh ( a\ell)$.
\end{proof}

The second lemma is an easy angle comparison lemma.

\bl
\label{lemangcom} 
Let $Y$ be a Hadamard manifold with sectional curvature $-b^2\leq K_Y\leq 0$. 
Let $\gamma\subset Y$ be a geodesic, $y_0\in \ga$, $y\in Y$ and $\bar y=\pi (y)$ be the projection of $y$ on  $\gamma$. Assume that  $d(y_0,\bar y)\leq R$ and $d(\bar y,y)\geq R$, then one has
 $\theta_{y_0}(y,\bar y)\geq \tfrac12e^{-bR}$.
\el

\begin{proof}[Proof of Lemma \ref{lemangcom}]  The angles of a  triangle in $\m H^2(-b^2)$ with same side-lengths are smaller than the angles of the triangle $(y_0y\bar y)$. It follows that   $\theta_{y_0}(y,\bar y)\geq\varphi$, where   
$\ph$ 
is the angle of an isosceles right triangle in 
$\m H^2(-b^2)$ with
adjacent sides $R$, which is  $\varphi=\arctan(\tfrac{1}{\cosh(bR)})\geq \tfrac12e^{-bR}$.
\end{proof}

\begin{proof}[Proof of Lemma \ref{lemangwea}] 
Let $\ga_+$ be a geodesic ray starting from $y_0=z_0$.
Denote by  $\pi : Y\to\gamma$ the orthogonal projection on the geodesic $\gamma$ that contains $\gamma_+$. 
Identify $\gamma\sim\m R$ so that $\gamma_+\sim [0,\infty[$.
Introduce, for $n,p\in\m N$, the points $\bar y_n=\pi (y_n)$ and $\bar z_p=\pi (z_p)$, and the sub-intervals $I_n=[\bar y_n,\bar y_{n+1}]$ and $J_p=[\bar z_p,\bar z_{p+1}]$ of $\gamma$. 

Let $R:=2n_0c$. We claim that 
\begin{equation}
\label{eqnminbynbzp}
\min(\bar y_N,\bar z_P)\leq R
\;\; \mbox{\rm for all $N, P\geq 0$.}
\end{equation}
According to \eqref{eqnlip}, one has the bound $\max(\bar y_{n_0}, \bar z_{n_0})\leq n_0 c$. Hence it is enough to check that the interval $\mc I:=[\bar y_{n_0},\bar y_N]\cap [\bar z_{n_0},\bar z_P]$ has length at most 
$|\mc I|\leq n_0c$.

Let $q\in \mc I$. This point lies in some non-empty interval
$I_n\cap J_p$ with $n,p\geq n_0$. 
Since the projection $\pi$ is $1$-Lipschitz, using  \eqref{eqnlip} again yields $d(\bar y_n,\bar z_p)\leq 2c$.
According to \eqref{eqnlin} one has $d(y_n,z_p)\geq \beta (n+p)$ so that
$$
\hbox{either\;\; $d(y_n,\bar y_n)\geq n\beta -c$
\; or\;\; 
$d(z_p,\bar z_p)\geq p\beta -c$}\, ,
$$
and Lemma \ref{lemjacfie} now provides a bound for the length of one of the intervals $I_n$ or $J_p$~:
$$
\hbox{either\;\; $|I_n|\leq 2c\,e^{2ac-na\beta }$
\; or\;\; $|J_p|\leq 2c\,e^{2ac-pa\beta }$}\, .
$$

It follows that the interval $\mc I$ is bounded by
\begin{eqnarray*}
|\mc I|&\leq& 
\textstyle
\sum\limits_{n\geq n_0}2c\,e^{2ac-na\beta }
+\sum\limits_{p\geq n_0}2c\,e^{2ac-pa\beta }\\
&\leq& \frac{4c\, e^{2ac}}{1-e^{-a\beta}}
\,e^{-n_0a\beta }\leq n_0 c\, .
\end{eqnarray*}
This proves our claim \eqref{eqnminbynbzp}.

Now, let $n, p\geq \ell_0:= \tfrac{4n_0c}{\al}$ 
so that, by \eqref{eqnlin}, 
one has $d(y_0,y_n)\geq 2R$ and $d(y_0, z_p)\geq 2R$.
The claim \eqref{eqnminbynbzp} tells us that
$$
\hbox{either\;\; $d(y_0,\bar y_n)\leq R$
\; or\;\; $d(y_0,\bar z_p)\leq R$}\, .
$$
Hence by Lemma \ref{lemangcom}, one has
$$
\hbox{either\;\; $\theta_{y_0}(y_n,\ga_+)\geq \tfrac12e^{-bR}$
\; or\;\; $\theta_{y_0}(z_p,\ga_+)\geq \tfrac12e^{-bR}$}\, .
$$ 
Since this is true for any ray $\ga_+$ based at
$y_0$, one gets $\theta_{y_0}(y_n,z_p)\geq \tfrac12e^{-bR}$.
\end{proof}

\begin{proof}[Proof of Theorem \ref{thmboumap}]
Point $a)$ follows from Propositions \ref{propca}.$d$ and 
\ref{propca}.$e$.

Point $b)$ follows from Propositions \ref{propcb}.$d$ and 
\ref{propcb}.$f$.  
\end{proof}

\br
\label{remboumap}
It follows from the proof that Theorem \ref{thmboumap}
also holds true for any rough Lipschitz map $f:X\ra Y$ between pinched Hadamard manifolds that satisfies property $\mc C$.
\er

\section{Beyond quasi-isometric maps}
\label{secweacoahar}

The aim of this Chapter \ref{secweacoahar} 
is the following
extension of Theorem \ref{thdfxhx} 
to all
weakly coarse embeddings $f$, and in particular to all
coarse embeddings $f$ (see Definitions
\ref{defcoaemb} and \ref{defweacoa}).

\subsection{Weakly coarse embeddings and harmonic maps}

\bt
\label{thmharcoa}
Every weakly coarse embedding $f:X\to Y$ between 
two pinched Hadamard manifolds is within  bounded distance 
from a unique harmonic map $h: X\to Y$.
\et

Indeed we will prove a  more general proposition
using Definition \ref{defpcc}.

\bp
\label{proharpcc}
Every rough Lipschitz map $f:X\to Y$ satisfying property $\mc C$ 
between two pinched Hadamard manifolds is within  bounded distance 
from a unique harmonic map $h: X\to Y$.
\ep

The main new ingredients in the proof 
are the construction and the properties 
of a boundary map of $f$.
These new ingredients which do not involve
harmonic maps were explained in Chapter \ref{secbouweacoa}. 
We now explain how to adapt the proof of Theorem  \ref{thdfxhx}
using these new ingredients.

\espace

\subsection{Rough Lipschitz harmonic maps}
\label{secrouliphar}
\bq
We first want to point out that Theorem \ref{thmharcoa} 
can not be extended to all rough Lipschitz maps. 
\eq

\bex
\label{exaharlip}
There exists an injective Lipschitz map $f:\m H^2\to\m H^2$ 
from the hyperbolic plane to itself, that extends continuously 
to the visual boundary as the identity map, 
and which is not within  bounded distance from any harmonic map.
\eex

\begin{proof}
We will consider a map $f:\m H^2\to\m H^2$ that  commutes 
to a parabolic subgroup of ${\rm Isom}(\m H^2)$. 
Let us work in the upper half-plane model. The map $f$ is defined by 
$$
f(u,v)=(u,v+v^2) \quad u\in\m R\, , v>0\,,
$$
so that $f\circ s_t=s_t\circ f$ where $s_t(u,v)=(t-u,v)$ 
for any $t\in\m R$. Observe that $f$ extends continuously 
to the visual compactification of $\m H^2$ by the identity, 
and that $f$ is 2-Lipschitz. 

Assume by contradiction that there exists a harmonic map 
$h:\m H^2\to\m H^2$ within  bounded distance from $f$. 
\vs

{\bf First case:} the map $h$ is unique.
In this case $h$ also commutes to the isometries $s_t$, 
so that there exists a continuous function $g:[0,\infty]\to[0,\infty]$ 
such that
$$
h(u,v)=(u,g(v))\quad u\in\m R\, , v>0\,,
$$
and with $g(0)=0$, $g(\infty)=\infty$. 
Saying that $h$ is harmonic is equivalent to requiring 
the function $g$ to satisfy the differential equation
$$
g\, g''=(g')^2-1\, .
$$
It follows that the harmonic map $h$ coincides 
with one of the maps $h_a:\m H^2\to\m H^2$ defined by
$$
h_a(u,v)=(u,\frac 1a\, \sinh (av))
$$
for some constant $a\geq 0$. Observe that none of the maps $h_a$ 
is within  bounded distance from $f$, hence the contradiction.
\vs

{\bf Second case:} the map $h$ is not unique.
Let $h_0$, $h_1$ be two distinct harmonic maps within bounded distance from $f$. We want again to find a contradiction. 
We will use arguments similar to those in Chapter \ref{secunihar}.  
Let $x_0:=(0,1)\in \m H^2$. 
We choose a sequence of points $x_n$ in $\m H^2$ for which the distances
\begin{eqnarray*}
d(h_0(x_n),h_1(x_n))
\;\;\mbox{\rm converge to}\;\; 
\delta:=\sup_{x\in \m H^2}d(h_0(x),h_1(x))>0\, 
\end{eqnarray*}
and we set $y_n:=f(x_n)$. 
Let $\ph_n$ and $\psi_n$ be the isometries of $\m H^2$ 
fixing the point $\infty\in \partial \m H^2$ 
and such that $\ph_n(x_0)=x_n$ and $\psi_n(x_0)=y_n$.
After extraction, the sequence of maps 
$\psi_n^{-1}\circ f\circ \ph_n$ converges 
to one of the maps $f_\beta:\m H^2\ra \m H^2$ with $\beta\in [0,\infty]$ 
where 
$$
f_{\beta}: (u,v)\mapsto 
(\tfrac{u}{1+\be},
\tfrac{ v+\be v^2}{1+\be})
\;\;\;\;{\rm when}\;\;
0\leq \be <\infty\, 
$$
$$
f_{\infty}: (u,v)\mapsto 
(0,v^2)
\;\;\;\;{\rm when}\;\;
\be =\infty\, .
$$
For $i=0$ and $1$, the sequence of harmonic maps 
$h_{i,n}:= \psi_n^{-1}\circ h_i\circ \ph_n$ converges, 
after extraction,
to a harmonic map $h_{i,\infty}:\m H^2\ra \m H^2$
within bounded distance to $f_\beta$.
The subharmonic function $x\mapsto d(h_{0,\infty}(x),h_{1,\infty}(x))$ 
achieves its maximum value at $x=x_0$, 
hence is a constant function equal to $\de$. 
Therefore, by Corollary \ref{cortwohar}, the harmonic maps $h_{0,\infty}$ and $h_{1,\infty}$ take their values in the same geodesic
$\Gamma$.
This forces the equality $\beta=\infty$ and the geodesic $\Gamma$ is the image of $f_\infty$.
Now we write 
$$
f_\infty(u,v)=(0, e^{2F_\infty(u,v)})
\;\;{\rm and}\;\; 
h_{0,\infty}(u,v)=(0, e^{2H_{0,\infty}(u,v)}),
$$
where $F_\infty(u,v)=\log v$ and where $H_{0,\infty}$ is a harmonic function.

The function $G_\infty:=F_\infty-H_{0,\infty}$ is then a bounded function 
on $\m H^2$ such that $\Delta G_\infty=1$. 
Such a function $G_\infty$ does not exist.
Indeed the function $G:x\mapsto 2\log(\cosh(d(x_0,x)/2))$ also satisfies $\Delta G =1$ 
and the function $G-G_\infty$ would be proper and harmonic, contradicting the maximum principle.
\end{proof}

\subsection{An overview of the proof of Proposition \ref{proharpcc}}
\label{secoverview}

\begin{proof}[Proof of Proposition \ref{proharpcc}]
The strategy is the same as for Theorem \ref{thdfxhx}: 

{\sl Step 1~: smoothing $f$ out. } 
By Proposition \ref{prosmoothquasi}  
there exists a smooth map $\widetilde f:X\to Y$ 
within  bounded distance from $f$ 
and whose first and second covariant derivatives are bounded on $X$. This function  $\widetilde f$ is Lipschitz 
and still satisfies property $\mc C$. Hence we can assume that $f=\widetilde{f}$.

{\sl Step 2~: solving a bounded Dirichlet problem. }  
We fix an origin $O\in X$. For any radius $R$ 
we consider the unique harmonic map $h_R:B(O,R)\to Y$ 
satisfying the Dirichlet condition $h_R=  f$ 
on the sphere $S(O,R)$. 

{\sl Step 3~: estimating the distance $d(h_R,f)$. } 
We will check in Section 
\ref{secinteriorbis}:
\bp
\label{proexistharmbis}
There exists a constant $\rho\geq 1$ such that, for any $R\geq 1$, one has
$d(h_{_R},f)\leq \rho$.
\ep

{\sl Step 4~: letting the sequence $h_R$ converge to $h$. } 
We prove this convergence 
as in Section \ref{secexistharm}.
\end{proof}

The proofs of Steps 1, 2 and 4, as well as the proof of uniqueness,  
require only minor modifications from the ones for quasi-isometric maps. 
Thus, the remaining of this paper will be devoted to the proof of Step 3.

\subsection{Interior estimate for rough Lipschitz}
\label{secinteriorbis}

In this section we complete the proof of Proposition \ref{proexistharmbis} whose structure is exactly the same as the proof of Proposition \ref{proexistharm}.
We will just repeat quickly the arguments
of Section \ref{secinterior}
pointing out the changes in the choice
of the many constants involved in the proof.

\subsubsection{Strategy}  
\label{secstrexibis}
Let $X$ and $Y$ be two Hadamard manifolds whose curvatures are pinched 
$-b^2\leq K\leq -a^2<0$. 
Let $k=\dim X$ and $k' =\dim Y$. 
We fix two constants $M, N>0$ as in Proposition \ref{prosixrcxthe}.
We set $\al=\frac{a}{2bk'N}$ so that, with the notation of 
Propositions \ref{propca} and \ref{propcb}, one has $\nu_\al =\frac{1}{2N}$.
We set $\nu=2\nu_\al=\frac{1}{N}$.

We start with a $\mc C^\infty$ 
 Lipschitz map $f:X\ra Y$ whose first and second  covariant derivatives are bounded.
We fix constants $c, C_1, C_2 \geq 1$ such that, 
$f$ satisfies property $\mc C_{C_1,C_2}$ as in Definition \ref{defpcc} 
and such that, for all $x$  in $X$,
one has 
\begin{equation}
\label{eqnquasiiso3bis}
\| Df(x)\|\leq c
\;\;{\rm , }\;\;\;
\| D^2f(x)\|\leq bc^2
\; .
\end{equation}
We let $C_3=C_{3,\al,\nu}\leq C_4=C_{4,\al,\nu}$ 
be the two constants as in Proposition \ref{propca} and \ref{propcb}. 
$$
C_{3} = \frac{C_1C_2^{1/N}}{1-e^{-a/(2N)}}
\; ,\;\;
C_{4} = \frac{C_1C_2^{1/N}}{(1-e^{-\beta bk'})(1-e^{-a/(2N)})}
\;{\rm where}\;\; 
\be=\frac{\al^2}{2\al +c}.
$$

{\bf Choosing $\ell_0$ very large.}
We fix a point $O$ in $X$. 
We introduce a fixed integer radius $\ell_0$ depending only on 
$a$, $b$, $k$, $k'$, $c$, $C_1$ and $C_2$.
This integer $\ell_0\geq 1$ is only required to satisfy 
the three inequalities  \eqref{eqnell0bis}, \eqref{eqnell1bis} and \eqref{eqnell2bis}:
\begin{equation}
\label{eqnell0bis}
b\ell_0 > 1,
\end{equation}
\begin{equation}
\label{eqnell1bis}
\ell_0 > \, 4n_0c/ \alpha
\;\;\mbox{\rm where $n_0\geq \tfrac{4e^{2ac}}{1-e^{-a\be }}$ is chosen with}\;\; 
MC_4 e^{-an_0\al}\leq \frac{\al}{8c},
\end{equation}
\begin{equation}
\label{eqnell2bis}
16\,e^{-\frac{a\al\ell_0}{4}} <
\theta_0
\;\;{\rm where}\;\; 
\th_0:=e^{-2n_0bc}/2\, .
\end{equation}

{\bf Choosing $\rho$ very large.} 
For $R>0$, let $h_{_R}:B(O,R)\ra Y$ be
the harmonic $\mc C^\infty$ map whose restriction to the sphere
$\partial B(O,R)$ is equal to $f$.
We let 
$\rho:=\sup\limits_{x\in B(O,R)}d(h_{_R}(x),f(x))\; .$
We argue by contradiction. If this supremum $\rho$ is not uniformly bounded,
we can  fix a radius $R$ such that 
$\rho$ satisfies the three inequalities \eqref{eqnrho1}, \eqref{eqnrho2} and \eqref{eqnrho3} that we rewrite below:
\begin{equation}
\label{eqnrho1bis}
a\rho > 8kbc^2\ell_0 \, ,
\end{equation}
\begin{equation}
\label{eqnrho2bis}
\frac{2^7(a\rho)^2}{\sinh(a\rho/2)} <  \theta_0\, .
\end{equation} 
\begin{equation}
\label{eqnrho3bis}
\rho >
4c\ell_0 M\,(2^{10}e^{b\ell_0}k)^{N}.
\end{equation}

We denote by $x$ a point of $B(O,R)$ where the supremum is achieved:
$
d(h_{_R}(x),f(x))=\rho\, .
$
According to the boundary estimate  \eqref{eqnboundary}, 
one has, using  \eqref{eqnrho1bis},
$$
d(x,\partial B(O,R))\geq 
\frac{a\rho}{3kbc^2}\geq 2\ell_0\, .
$$

{\bf Getting a contradiction.}
We will focus on the restrictions of both  maps $f$ and $h_{_R}$ 
to this ball $B(x,\ell_0)$.
We introduce the point $y:=f(x)$. 
For $\xi$ on the unit tangent sphere $S_x$, we will 
analyze the triangle inequality:
\begin{equation}
\label{eqnthththbis}
\th_y(f(\xi_{\ell_0}),h_{_R}(x))\leq \th_y(f(\xi_{\ell_0}),h_{_R}(\xi_{\ell_0}))+\th_y(h_{_R}(\xi_{\ell_0}),h_{_R}(x)),
\end{equation}
and prove that on a subset 
$U_{\ell_0}\!\smallsetminus\! A_{x,\al}(n_0)$ 
of the sphere, 
each term on the right-hand side is small
(Lemmas \ref{lemI1bis} and \ref{lemI2bis}) 
while the left-hand side is not always that small
(Lemma \ref{lemI0bis}),
giving rise to the contradiction. 

\begin{Def}
\label{defurvrwrbis}
Let $U_{\ell_0}=
\{\xi\in S_x\mid d(y,h_{_R}(\xi_{\ell_0}))\geq \rho-\ell_0\al/2\, \}\, .$
\end{Def}

\subsubsection{Measure estimate}  
\label{secmeasurebis}

\bl
\label{lemrhxbis}
For $\xi$ in $S_x$, one has 
$d(y,h_{_R}(\xi_{\ell_0}))\leq \rho+c\,\ell_0.$
\el

\begin{proof} This is Lemma \ref{lemrhx}.
\end{proof}

\bl
\label{lemdhxbis}
For $\xi$ in $S_x$, and $r\leq \ell_0$, one has 
$\|Dh_{_R}(\xi_r)\|\leq 2^8 kb\rho.$
\el

\begin{proof} This is Lemma \ref{lemdhx}. 
It uses \eqref{eqnell0bis}, \eqref{eqnrho1bis} and Lemma \ref{lemrhxbis}.
\end{proof}

\begin{Lem}
\label{lemsiwrbis}
Let $\si=\si_{x,\ell_0}$ be the harmonic measure on the sphere $S_x\simeq S(x,\ell_0)$
for the center point $x$.
Then one has
$\si(U_{\ell_0})\geq \frac{\al}{3\,c}\; .$
\end{Lem}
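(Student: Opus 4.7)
The proof follows the same template as that of Lemma \ref{lemsiwr}: the only change from the quasi-isometric setting is that the threshold defining $U_{\ell_0}$ has become $\rho-\ell_0\alpha/2$ (with the small parameter $\alpha=a/(2bk'N)$) instead of $\rho-\ell_0/(2c)$. Since the argument for Lemma \ref{lemsiwr} was purely a one-line comparison of the harmonic upper and lower bounds for $\rho_h$ on the sphere, only the bookkeeping of constants at the end needs to be redone.

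The plan is the following. First, observe that by Lemma \ref{lemdefh} the function $\rho_h:z\mapsto d(y,h_R(z))$ is subharmonic on $B(x,\ell_0)$, so that, comparing $\rho_h$ to the harmonic extension of its boundary values and evaluating at the center,
\[
\int_{S(x,\ell_0)}(\rho_h(\xi_{\ell_0})-\rho)\,\mathrm d\sigma(\xi)\geq 0.
\]
Second, apply Lemma \ref{lemrhxbis} to bound $\rho_h(\xi_{\ell_0})-\rho\leq c\ell_0$ on $U_{\ell_0}$, and use the definition of $U_{\ell_0}$ to bound $\rho_h(\xi_{\ell_0})-\rho<-\ell_0\alpha/2$ on its complement. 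Combining these two estimates with the integral inequality yields
\[
c\,\ell_0\,\sigma(U_{\ell_0})-\tfrac{\ell_0\alpha}{2}\bigl(1-\sigma(U_{\ell_0})\bigr)\geq 0,
\]
i.e.\ $\sigma(U_{\ell_0})\geq \frac{\alpha}{2c+\alpha}$. Third, since $c\geq 1$ while $\alpha=a/(2bk'N)\leq 1/2$ by the pinching $a\leq b$ and $k',N\geq 1$, one has $2c+\alpha\leq 3c$, and the claimed lower bound $\sigma(U_{\ell_0})\geq \alpha/(3c)$ follows.

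There is no serious obstacle: the whole argument is a mechanical adaptation of the proof of Lemma \ref{lemsiwr}, and the only mildly nontrivial point is checking that $\alpha\leq c$ so that $\alpha/(2c+\alpha)\geq \alpha/(3c)$, which is immediate from the definitions of $\alpha$ and $c$.
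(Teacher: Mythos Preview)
Your proof is correct and follows exactly the approach of the paper, which simply writes ``Same as Lemma \ref{lemsiwr}'' and leaves to the reader the bookkeeping you have carried out. The one small point is that $N\geq 1$ is not explicitly asserted in Proposition \ref{prosixrcxthe}, but this may be assumed without loss of generality (enlarging $N$ only weakens \eqref{eqnsixrcxthe}), so that indeed $\alpha\leq c$ and $2c+\alpha\leq 3c$.
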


\begin{proof} Same as Lemma \ref{lemsiwr}.
\end{proof}

\subsubsection{Estimating the angles}
\label{secboundi1bis}

\bl
\label{lemI1bis}
\mbox{}\!\!\! For  $\xi$ in $U_{\ell_0}
\!\!\smallsetminus\! A_{x,\al}(n_0)$, one has
$\theta_y(\! f(\xi_{\ell_0}),h_{_R}\!(\xi_{\ell_0}\!)\!)\!\leq\!  4
e^{\! \frac{-a\al\ell_0}{4}}\!\!<\! \frac{\th_0}{4}.$
\el

\begin{proof} Same as Lemma \ref{lemI1}, using \eqref{eqnell2bis}.
\end{proof}

\bl
\label{lemI2bis}
For $\xi$ in  $S_x$, one has
$\theta_y(h_{_R}(\xi_{\ell_0}),h_{_R}(x))\leq 
\frac{2^5\,(a\rho)^2}{{\sinh}(a\rho/2)}< \frac{\th_0}{4}.$
\el

\begin{proof} 
Same as Lemma \ref{lemI2}, relying on  Lemma \ref{lemI3bis} and using both \eqref{eqnrho1bis} and
\eqref{eqnrho2bis}.
\end{proof}

\bl
\label{lemI3bis}
For all $\xi$ in  $S_x$ and $r\leq \ell_0$, one has
$d(y,h_{_R}(\xi_r))\geq \rho/2.$
\el

\begin{proof} Same as  Lemma \ref{lemI3}, using Lemma \ref{lemdhxbis} and  Condition \eqref{eqnrho3bis}. 
\end{proof}

\bl
\label{lemI0bis} 
There exist $\xi$, $\eta$ in 
$U_{\ell_0}\!\smallsetminus\! A_{x,\al}(n_0)$ with  
$\th_y(f(\xi_{\ell_0}),f(\eta_{\ell_0}))\geq \th_0\, .$
\el

\begin{proof}[Proof of Lemma \ref{lemI0bis}]
Let $\si_0:=\frac{\al}{4c}$. 
According to Lemma \ref{lemsiwrbis}, one has
$$
\si( U_{\ell_0})>\si_0 >0.
$$
By the definition of $M,N$ in Proposition \ref{prosixrcxthe}, one can apply 
Proposition \ref{propca}.$b$  to
the harmonic measure $\si=\si_{x,\ell_0}$
and one gets, using \eqref{eqnell1bis},  that
$$
\si(A_{x,\al}(n_0))\leq MC_3e^{-\frac{an_0}{2N}}
\leq \frac{\al}{8c}=\si_0/2.
$$
Therefore, there exists an element 
$\xi\in U_{\ell_0}\!\smallsetminus\! A_{x,\al}(\ell_0)$.
Applying 
Proposition \ref{propcb}.$b$  to
the harmonic measure $\si=\si_{x,\ell_0}$
 one gets, using \eqref{eqnell1bis} again, that
$$
\si(B^\xi_{x,\al}(n_0))\leq MC_4e^{-\frac{an_0}{2N}}
\leq \frac{\al}{8c}=\si_0/2.
$$
Therefore, there exists an element 
$\eta\in U_{\ell_0}\!\smallsetminus\! 
(A_{x,\al}(n_0)\cup B^\xi_{x,\al}(n_0))$.
This element satisfies
\begin{eqnarray*}
\theta_y(f(\xi_{\ell_0}),f(\eta_{\ell_0})
&\geq & 
e^{-2n_0bc}/2
= \th_0
\end{eqnarray*}
because of \eqref{eqnell1bis}, \eqref{eqnell2bis}
and Proposition \ref{propcb}.$e$. 
\end{proof}

\begin{proof}[End of the proof of Proposition \ref{proexistharmbis}] 
Let $\xi$, $\eta$ be  two vectors of
$U_{\ell_0}
\!\smallsetminus\! A_{x,\al}(n_0)$ given by Lemma \ref{lemI0bis}.
Applying  Lemmas \ref{lemI1bis} and \ref{lemI2bis} to  $\xi$ and $\eta$, one gets 
$$
\th_y(f(\xi_{\ell_0}),f(\eta_{\ell_0}))\leq 
\th_y(f(\xi_{\ell_0}),h_{_R}(x))+
\th_y(h_{_R}(x),f(\eta_{\ell_0})) <
\th_0,
$$
which contradicts Lemma \ref{lemI0bis}. 
\end{proof}

{\footnotesize
The first version of this paper 
containing Chapters 1 to 5 was released in February 2017.
In this second version, Chapters 6 and 7 were added.
In between, two related preprints 
were posted in the ArXiv: \cite{PanSou17} and
\cite{SidWen18}.}



\vspace{2em}

{\small\noindent Y. Benoist  \& D. Hulin,\;
CNRS \& Universit\'e Paris-Sud, 
Orsay 91405 France\\
\noindent yves.benoist@u-psud.fr \;\&\; dominique.hulin@u-psud.fr}

\end{document}